\documentclass[12pt,reqno]{amsart}
\usepackage{amssymb}

\usepackage{amsmath,latexsym,amsfonts}
\usepackage{graphicx}
\usepackage{srcltx}
\usepackage{amscd}
\usepackage{hyperref}

\newtheorem{lemma}{Lemma}[section]
\newtheorem{coro}[lemma]{Corollary}
\newtheorem{prop}[lemma]{Proposition}
\newtheorem{thm}[lemma]{Theorem}
\newtheorem{defn}[lemma]{Definition}
\newtheorem{rem}[lemma]{Remark}
\newtheorem{ex}[lemma]{Example}
\makeatletter\@addtoreset{equation}{section}
\renewcommand\theequation{\thesection.\@arabic\c@equation}

\textwidth=6.5in
\textheight=9in
\topmargin=0in
\oddsidemargin=0in
\evensidemargin=0in

\begin{document}

\title[Twisted Basic Cohomology]
{Twisted Basic Dolbeault cohomology on transverse K\"ahler foliations}
\author[S.~D.~Jung]{Seoung Dal Jung}
\address{Department of Mathematics\\
Jeju National University \\
Jeju 690-756 \\
Republic of Korea}
\email[S.~D.~Jung]{sdjung@jejunu.ac.kr}

\subjclass[2010]{53C12; 53C21; 53C55; 57R30; 58J50}
\keywords{Riemannian foliation, transverse K\"ahler foliation, basic Dolbeault cohomology, twisted basic Dolbeault cohomology,  Kodaira-Serre duality, hard Lefschetz theorem}

\begin{abstract}
In this paper, we study the twisted basic Dolbeault cohomology  and transverse hard Lefschetz theorem on   a transverse K\"ahler foliation.  And we give some properties for $\Delta_\kappa$-harmonic forms and prove the  Kodaira-Serre type duality and Dolbeault isomorphism for the twisted basic Dolbeault cohomology.
\end{abstract}

\maketitle

 \renewcommand{\thefootnote}{}

\footnote {The author was supported by  the National Research Foundation of Korea (NRF) grant funded by the Korea government (MSIP) (NRF-2018R1A2B2002046).  }

\renewcommand{\thefootnote}{\arabic{footnote}}
\setcounter{footnote}{0}

\section{Introduction}
Let $(M,\mathcal{F})$ be a smooth manifold with a foliation $\mathcal F$. One of smooth invariants of $\mathcal F$ is the basic cohomology. 
The basic forms of $(M,\mathcal F)$ are locally forms on the leaf space; that is, forms $\phi$ satisfying $X\lrcorner\phi = X\lrcorner d\phi =0$ for any vector $X$ tangent to the leaves, where $X\lrcorner$ denotes the interior product with $X$.
Basic forms are preserved by the exterior derivative and are used to define basic de Rham cohomology groups $H_B^* (\mathcal F)$, which is defined by
\begin{align*}
H_B^r(\mathcal F) ={\ker d_B\over {\rm Im}\,d_B},
\end{align*}
where $d_B$ is the restriction of $d$ to the basic forms.
In 
general, the basic de Rham cohomology group does not necessarily satisfy Poincar\'e  duality, in fact, it satisfies the twisted Poincar\'e duality \cite{KT2}: 
\begin{align*}
H_B^r (\mathcal F) \cong H_T^{q-r}(\mathcal F),
\end{align*}
where $q=codim(\mathcal F)$ and $H_T^r(\mathcal F)={\ker d_T\over {\rm Im}\; d_T}$ is the cohomology of $d_T =d_B -\kappa_B\wedge$. Here $\kappa_B$ is the basic part of the mean curvature form $\kappa$ of $\mathcal F$. 

 It is well-known \cite{Al,EKH,KT3,PaRi} that on a compact oriented manifold $M$ with a transversally oriented Riemannian foliation $\mathcal F$, $H_B^r (\mathcal F)\cong {\rm ker}\Delta_B$ is finite dimensional, where $\Delta_B$ is the basic Laplacian.
 Because of this Hodge theorem, researchers have been able to show relationships between curvature bounds and basic cohomology.
 In \cite{Heb},  J. Hebda proved that a lower bound on transversal Ricci curvature for a Riemannian foliation of a compact manifold causes the space of leaves to be compact and the first basic cohomology group to be trivial.
 Another example relating the geometry and the topology came in 1991, when M. Min-Oo et al. \cite{MO} proved that if the transversal  curvature operator of $(M, \mathcal F)$ is positive definite, then the cohomology $H_B^r (\mathcal F)=0 \ (0<r<q)$;  that is, any basic harmonic $r$-form is trivial. 
  There are many other examples of known relationships between transversal curvature and basic cohomology.
 
Recently,  G. Habib and K. Richardson \cite{HR}  introduced the twisted basic de Rham cohomology 
\begin{equation*}
H_\kappa^r(\mathcal F) ={\ker d_\kappa \over {\rm Im}\, d_\kappa}
\end{equation*}
 of the twisted differential operator $d_\kappa = d_B - \frac12 \kappa_B\wedge$ and proved the Poincar\'e duality of the  twisted basic de Rham cohomology $H_\kappa^*(\mathcal F)$   on foliations, that is,
\begin{align*}
H_\kappa^r(\mathcal F) \cong H_\kappa^{q-r}(\mathcal F).
\end{align*}
Moreover,  $H_\kappa^r(\mathcal F)\cong \ker\Delta_\kappa$, where $\Delta_\kappa = d_\kappa\delta_\kappa + \delta_\kappa d_\kappa$ is the twisted basic Laplacian (cf. Section 2.2).  From the Weitzenb\"ock  formula for the twisted basic Laplacian, if the transversal Ricci curvature ${\rm Ric}^Q$ is non-negative and either $\mathcal F$ is nontaut or ${\rm Ric}^Q$ is positive at some point,  then $H_\kappa^1(\mathcal F)=\{0\}$.  Also, the authors \cite{HR} proved that $\mathcal F$ is taut if and only if $H_\kappa^0(\mathcal F)\cong H_\kappa^q(\mathcal F) =\{0\}$ (Tautness theorem).

On a transverse K\"ahler foliation of codimension $q=2n$, the basic Dolbeault cohomology group of $\bar\partial_B$  is defined by
\begin{equation*} 
H^{r,s}_B(\mathcal F)={\ker \bar\partial_B\over {\rm Im}\, \bar\partial_B},
\end{equation*}
where $0\leq r,s \leq n$.
The Kodaira-Serre duality for the basic Dolbeault cohomology  does not necessarily hold unless $\mathcal F$ is taut,
but we exhibit a version of Kodaira-Serre duality \cite{JR1} that
actually does hold in all cases. That is,
\begin{align*}
H_B^{r,s}(\mathcal F)\cong H_T^{n-r,n-s}(\mathcal F),
\end{align*}
where  $H_T^{r,s}(\mathcal F) ={\ker\bar\partial_T\over {\rm Im}\; \bar\partial_T}$  is the Dolbeault cohomology group of $\bar\partial_T$ (cf. Section 3.2). 
Recently, G. Habib and  L. Vezzoni \cite{HV}  studied the basic Dolbeault cohomology  and proved some vanishing theorem for basic Dolbeault cohomology by using the Weitzenb\"ock formula for the twisted basic Laplacian.

In this paper,  we define the twisted basic Dolbeault cohomolgy $H_\kappa^{r,s}(\mathcal F)$ of the twisted  operator $\bar\partial_\kappa$ (cf. Section 3.2) acting on the basic forms of type $(r,s)$ by
\begin{align*}
H_\kappa^{r,s}(\mathcal F) = {\ker \bar\partial_\kappa\over {\rm Im}\, \bar \partial_\kappa}.
\end{align*}
The twisted basic Dolbeault cohomology satisfies  the Kodaira-Serre duality (Theorem 3.11), that is,
\begin{align*}
H_\kappa^{r,s}(\mathcal F)\cong H_\kappa^{n-r,n-s}(\mathcal F).
\end{align*}
Trivially, if $\mathcal F$ is taut, then $H_B^{r,s} (\mathcal F)\cong H_T^{r,s}(\mathcal F)\cong H_\kappa^{r,s}(\mathcal F)$. 
Also, we prove the Hodge decomposition for the twisted basic Dolbeault cohomology (Theorem 3.13) and give
some properties for $\Delta_\kappa$-harmonic forms.  

\begin{rem}
The twisted basic de Rham cohomology is the special cohomology of Lichnerowicz basic cohomology on foliations \cite{Had}.  The Lichnerowicz cohomology is the  cohomology of the complex $(\Omega^*(M),d_\theta)$ of differential forms on a smooth manifold $M$ with the de Rham differential operator $d_\theta=d+\theta\wedge$ deformed by a closed 1-form $\theta$. The Lichnerowicz cohomology is a  proper tool of locally conformal symplectic geometry \cite{HR}.
\end{rem}
\begin{rem} The Lichnerowicz basic cohomology depends only on the basic class of a closed basic 1-form \cite [Proposition 3.0.11]{Had}.   In particular,  the twisted basic de Rham cohomology group depends only on the \'Alvarez class $[\kappa_B]\in H_B^1(\mathcal F)$. That is, for any bundle-like metrics  associated with the same transverse structure, the twisted basic de Rham cohomologies are isomorphic \cite[Theorem 2.11]{HR}. 
 \end{rem}

\section{The basic cohomology}\label{Basic}
\subsection{The basic cohomology}\label{knownfact}
Let $(M,\mathcal F,g_Q)$ be a $(p+q)$-dimensional Riemannian  foliation of codimension $q$ with a holonomy invariant metric $g_Q$ on the normal bundle $Q=TM/T\mathcal F$, meaning that $L_Xg_Q=0$ for all $X\in T\mathcal F$ on a Riemannian manifold $(M,g_M)$  with a bundle-like metric $g_M$ adapted to $g_Q$, where $T\mathcal F$ is the tangent bundle of $\mathcal F$ and $L_X$ denotes the Lie derivative.  
Let $\nabla$ be the transverse Levi-Civita connection on the normal bundle $Q$, which is torsion-free and metric with respect to $g_Q$ \cite{TO1,TO2}. Let $R^Q$ and ${\rm Ric}^Q$ be the transversal curvature tensor and the transversal Ricci operator  of $\mathcal F$ with respect to $\nabla$, respectively.  The mean curvature vector $\tau$ of $\mathcal F$ is given by
\begin{equation*}
\tau=\sum_{i=1}^p \pi(\nabla^M_{f_i}f_i),
\end{equation*}
where $\{f_i\}_{i=1,\cdots,p}$ is a local orthonormal basis of $T\mathcal F$ and $\pi:TM\to Q$ is a natural projection. Then the {\it mean curvature form} $\kappa$
of $\mathcal F$ is given by
\begin{equation*}
\kappa(X)=g_Q(\tau,\pi(X))
\end{equation*}
for any tangent vector $X\in \Gamma TM$.
 Let $\Omega_B^r(\mathcal F)$ be
the space of all {\it basic $r$-forms}, i.e.,  $\phi\in\Omega_B^r(\mathcal F)$ if and only if
$X\lrcorner\phi=0$ and $L_X\phi=0$ for any vector $X\in \Gamma T\mathcal F$.  The foliation $\mathcal F$ is said to be {\it minimal}  if $\kappa=0$. It is well-known \cite{TO1} that on
a compact manifold, $\kappa_B$ is closed, i.e., $d\kappa_B=0$, where $\kappa_B$ is the basic part of $\kappa$.  And the  mean curvature form satisfies the Rummler's formula:
\begin{equation*}\label{Rummlerformula}
d\chi_{\mathcal F} =-\kappa\wedge\chi_{\mathcal F} +\varphi_0,\quad \chi_{\mathcal F}\wedge *\varphi_0=0,
\end{equation*}
where $\chi_{\mathcal F}$ is the characteristic form of $\mathcal F$ and  $*$ is the Hodge star operator associated to $g_M$.

Now we recall the transversal star operator $\bar *:\Omega_B^r(\mathcal F)\to \Omega_B^{q-r}(\mathcal F)$ given by
\begin{align*}
\bar * \phi = (-1)^{p(q-r)}*(\phi\wedge\chi_{\mathcal F}).
\end{align*}
 Trivially, $\bar *^2\phi = (-1)^{r(q-r)}\phi$ for any basic form $\phi\in\Omega_B^r(\mathcal F)$. Let $\nu$ be the transversal volume form, that is, $*\nu=\chi_{\mathcal F}$. Then  the pointwise inner product $\langle\cdot,\cdot\rangle$ on $\Omega_B^*(\mathcal F)$ is defined by $\langle\phi,\psi\rangle\nu = \phi\wedge\bar *\psi$.

 Let $d_B=d|_{\Omega_B^*(\mathcal F)}$ and $d_T=d_B -\kappa_B\wedge$.  
Then the formal adjoint operators $\delta_B$ and $\delta_T$ of  $d_B$ and $d_T$ on basic forms are given by 
\begin{equation}\label{2-1}
\delta_B=(-1)^{q(r+1)+1}\bar *d_T \bar *= \delta_T+\kappa_B^\sharp\lrcorner,\quad
\delta_T=(-1)^{q(r+1)+1}\bar *d_B \bar *, 
\end{equation}
respectively \cite{Al,KT3,TO2}, where  $\kappa_B^\sharp$ be the dual vector of $\kappa_B$.  For a local orthonormal frame $\{E_a\}_{a=1,\cdots,q}$ of the normal bundle $Q$, $\delta_T$ is given \cite{JR2} by 
\begin{equation}\label{2-2}
\delta_T =-\sum_{a=1}^q E_a\lrcorner \nabla_{E_a}.
\end{equation}  
 Now we define two Laplacians $\Delta_B$ and $\Delta_T$ 
acting on $\Omega_B^*(\mathcal F)$ by 
\begin{equation*}
\Delta_B=d_B\delta_B+\delta_B d_B, \quad \Delta_T =d_T\delta_T +\delta_T d_T,
\end{equation*}
respectively. The Laplacian $\Delta_B$ is said to be {\it basic Laplacian}. Then
\begin{equation}\label{2-3}
\Delta_B \bar * = \bar *\Delta_T.
\end{equation} 
Also, we have the generalization of the usual de Rham-Hodge decompositon.
\begin{thm} (\cite{KT3},\cite{TO2})  Let $(M,\mathcal F,g_Q)$ be a transversally oriented Riemannian foliation $\mathcal F$ on  a compact oriented manifold $M$  with a bundle-like metric. Then
\begin{align*}
\Omega_B^r(\mathcal F) &= \mathcal H_B^r(\mathcal F) \oplus {\rm Im}\,d_B \oplus{\rm Im}\,\delta_B\\
&= \mathcal H_T^r(\mathcal F)  \oplus {\rm Im}\,d_T \oplus {\rm Im}\,\delta_T
\end{align*} 
with finite dimensional  $\mathcal H_B^r(\mathcal F)=\ker \Delta_B$ and $\mathcal H_T^r(\mathcal F) =\ker\Delta_T$.
\end{thm}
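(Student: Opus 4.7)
The plan is to carry out the transverse Hodge theory on the basic complex by adapting the classical de Rham--Hodge argument, the key analytic input being the transverse ellipticity of the basic Laplacian on the compact manifold $M$. First I would equip $\Omega_B^*(\mathcal F)$ with the $L^2$-inner product $(\phi,\psi)=\int_M\langle\phi,\psi\rangle\,\chi_{\mathcal F}\wedge\nu$ and check, using Rummler's formula $d\chi_{\mathcal F}=-\kappa\wedge\chi_{\mathcal F}+\varphi_0$ together with $\chi_{\mathcal F}\wedge *\varphi_0=0$, that the operator $\delta_B$ displayed in (2.1) is the formal $L^2$-adjoint of $d_B$ on the basic complex; the correction term $\kappa_B^\sharp\lrcorner$ in $\delta_B=\delta_T+\kappa_B^\sharp\lrcorner$ arises precisely from passing $d$ past $\chi_{\mathcal F}$ and extracting the basic part. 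The same integration by parts shows that $\delta_T$ is the formal adjoint of $d_T$; since $d\kappa_B=0$ gives $d_T^2=0$, this complex is well-defined. It then follows formally that $\Delta_B$ and $\Delta_T$ are self-adjoint and non-negative, that $\mathcal H_B^r(\mathcal F)=\ker d_B\cap\ker\delta_B$ and $\mathcal H_T^r(\mathcal F)=\ker d_T\cap\ker\delta_T$, and that the three prospective summands in each decomposition are pairwise $L^2$-orthogonal thanks to $d_B^2=\delta_B^2=0$ (respectively $d_T^2=\delta_T^2=0$).

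The analytic heart of the theorem is then to show that $\mathcal H_B^r(\mathcal F)$ is finite-dimensional and that $\Delta_B$ has closed range on the $L^2$-completion of $\Omega_B^r(\mathcal F)$. Here I would invoke the transverse ellipticity of $\Delta_B$: although not elliptic on $\Omega^*(M)$, its principal symbol is non-degenerate on covectors conormal to the leaves, and on the basic subcomplex it behaves like a strongly elliptic operator over the leaf space. Combining this with compactness of $M$ and the Kamber--Tondeur/El~Kacimi--Hector framework---which either lifts the problem to a genuinely elliptic one on an auxiliary basic manifold built from the transverse orthonormal frame bundle, or exploits the $\mathcal F$-invariance of basic forms to extract a transverse elliptic quotient---yields the required finite-dimensionality and closed range. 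Given these, the first orthogonal decomposition follows in the standard way: any $\phi\in\Omega_B^r(\mathcal F)$ orthogonal to $\mathcal H_B^r(\mathcal F)$ lies in the image of $\Delta_B$, so solving $\Delta_B\eta=\phi$ yields $\phi=d_B(\delta_B\eta)+\delta_B(d_B\eta)$, and the three pieces are orthogonal by the computations above.

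The second decomposition is handled analogously. Since $\Delta_T$ has the same principal symbol as $\Delta_B$ (they differ only in lower-order terms involving $\kappa_B$), it too is transversally elliptic and the preceding argument applies verbatim; alternatively, the intertwining relation $\Delta_B\bar *=\bar *\Delta_T$ from (2.3), combined with $\bar *^2=(-1)^{r(q-r)}$, transfers the Hodge theory from $\Delta_B$ at degree $q-r$ to $\Delta_T$ at degree $r$, yielding in particular the isomorphism $\mathcal H_T^r(\mathcal F)\cong\mathcal H_B^{q-r}(\mathcal F)$ realised by $\bar *$, together with matching identifications ${\rm Im}\,d_T=\bar *\,{\rm Im}\,\delta_B$ and ${\rm Im}\,\delta_T=\bar *\,{\rm Im}\,d_B$. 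The hardest step is clearly the analytic one above: because $\Delta_B$ is degenerate along the leaves, the classical Hodge theorem on $M$ does not apply directly, and one must appeal to transverse elliptic theory to extract finite-dimensionality of the harmonic space and closed range of the Laplacian---this is the technical content supplied by the references cited in the statement.
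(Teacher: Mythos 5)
The paper gives no proof of this theorem---it is quoted from the cited references (Kamber--Tondeur and Tondeur, with El~Kacimi--Hector and Park--Richardson supplying the analysis)---and your outline reproduces exactly the argument found there: the $L^2$-adjoint computation via Rummler's formula yielding $\delta_B=\delta_T+\kappa_B^\sharp\lrcorner$, transverse ellipticity of $\Delta_B$ (handled through the lift to the transverse frame bundle/basic manifold) giving finite-dimensionality of $\ker\Delta_B$ and closed range, the standard orthogonal splitting, and the second decomposition obtained either by repeating the symbol argument for $\Delta_T$ or by the intertwining $\Delta_B\bar*=\bar*\Delta_T$. Your proposal is correct and takes essentially the same route as the sources the paper relies on.
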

On a compact manifold,   $d_T^2 =0$ because of  $d\kappa_B =0$. So the basic de Rham cohomology groups $H_B^r(\mathcal F)$ and $H_T^r(\mathcal F)$ are defined  by
\begin{align*}
H_B^r(\mathcal F)={\ker d_B\over {\rm Im}\,d_B},\quad H_T^r(\mathcal F) ={\ker d_T\over {\rm Im}\,d_T},
\end{align*}
respectively.
 Then  it is well-known \cite{TO2}  that 
 \begin{align*}
  H_B^r (\mathcal F)\cong \mathcal H_B^r(\mathcal F),\quad   H_T^r(\mathcal F)\cong \mathcal H_T^r(\mathcal F).
  \end{align*}
  From (\ref{2-3}), we have the twisted duality \cite{KT2}
\begin{align}\label{twistedduality}
 H_B^r (\mathcal F)\cong H_T^{q-r}(\mathcal F).
\end{align}
If the foliation  $\mathcal F$ is  taut (means that there exists a Riemannian metric on $M$ for which all leaves are minimal), then the Poincar\'e duality  holds \cite{KT1}. That is,
\begin{align*}
 H_B^r (\mathcal F)\cong  H_B^{q-r}(\mathcal F). 
 \end{align*}
 Now, we introduce the operator $\nabla_{\rm tr}^*\nabla_{\rm tr}$, which is defined by
 \begin{align*}
 &\nabla_{\rm tr}^*\nabla_{\rm tr}\phi =-\sum_a \nabla_{E_a}\nabla_{E_a}\phi +\nabla_{\kappa_B^\sharp}\phi.
 \end{align*}
 It is well-known (\cite[Proposition 3.1]{JU1}) that the operator $\nabla_{\rm tr}^*\nabla_{\rm tr}$ is non-negative and
formally self-adjoint. Then we have the following  generalized Weitzenb\"ock formula \cite{JU1}:
for any basic form $\phi\in
\Omega_B^r(\mathcal F)$
\begin{equation}\label{2-6}
 \Delta_B \phi = \nabla_{\rm tr}^*\nabla_{\rm tr}\phi +A_{\kappa_B^\sharp}(\phi)+ F(\phi),
\end{equation}
 where  $A_Y(\phi)=L_Y\phi -\nabla_Y\phi$ and
 \begin{align*}
F(\phi)=\sum_{a,b}\theta^a \wedge E_b\lrcorner R^Q(E_b, E_a)\phi.
 \end{align*}
 Here, $\theta^a$ is the dual basic 1-form of $E_a$.
 In particular, if $\phi$ is a basic 1-form, then $F(\phi)^\sharp={\rm Ric}^Q(\phi^\sharp)$.

\begin{rem}
Observe that, from the preceding results, the dimensions of $H_B^r(\mathcal F)$ and $ H_T^r(\mathcal F)$ are smooth invariants of $\mathcal{F}$ and do not depend on the choices of bundle-like metric $g_M$ or on the transversal metric $g_Q$, even though the spaces of harmonic forms do depend on these choices.
\end{rem}
 
\begin{thm} \cite{JR1}
Let $(M,\mathcal F,g_M)$ be a compact Riemannian manifold with a foliation $\mathcal F$ of codimension $q$ and a bundle-like metric $g_M$.
If the endomorphism $F$ is positive-definite, then there are no nonzero basic harmonic forms, that is, $
H_B^r (\mathcal F)=\{0\}.$ In particular, if ${\rm Ric}^Q$ is positive-definite, then $H_B^1(\mathcal F)=\{0\}$.
\end{thm}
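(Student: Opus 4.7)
The plan is to combine the Hodge theorem for basic cohomology with a Bochner-type vanishing argument driven by the generalized Weitzenböck formula~(2.6).

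First I would apply Theorem~2.1 to identify $H_B^r(\mathcal F)\cong\mathcal H_B^r(\mathcal F)=\ker\Delta_B$, reducing the claim to showing that $\Delta_B\phi=0$ implies $\phi=0$. Fix such a basic harmonic $\phi$; compactness of $M$ together with $\Delta_B\phi=0$ gives both $d_B\phi=0$ and $\delta_B\phi=0$, a fact that will be essential for taming the cross term below.

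Next I would apply (2.6) to $\phi$ and take the $L^2$ inner product with $\phi$ with respect to the bundle-like measure, obtaining
\begin{equation*}
0=\|\nabla_{\rm tr}\phi\|^2_{L^2}+\int_M\langle A_{\kappa_B^\sharp}\phi,\phi\rangle\,d\mu+\int_M\langle F(\phi),\phi\rangle\,d\mu,
\end{equation*}
where the first term is non-negative by the self-adjointness and non-negativity of $\nabla_{\rm tr}^*\nabla_{\rm tr}$ stated before~(2.6). Positive-definiteness of $F$ provides a constant $c>0$ with $\langle F(\phi),\phi\rangle\geq c|\phi|^2$ pointwise on $M$.

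The main obstacle will be controlling the cross-term $\int_M\langle A_{\kappa_B^\sharp}\phi,\phi\rangle\,d\mu$, whose pointwise sign is not visible from~(2.6). I would expand $A_{\kappa_B^\sharp}=L_{\kappa_B^\sharp}-\nabla_{\kappa_B^\sharp}$, apply Cartan's magic formula $L_{\kappa_B^\sharp}\phi=d_B(\kappa_B^\sharp\lrcorner\phi)+\kappa_B^\sharp\lrcorner d_B\phi$ (the second summand vanishing since $d_B\phi=0$), and rewrite $\kappa_B^\sharp\lrcorner\phi=-\delta_T\phi$ via $\delta_B\phi=0$ and the identity $\delta_B=\delta_T+\kappa_B^\sharp\lrcorner$ from~(2.1). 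The resulting piece $\int_M\langle d_B\delta_T\phi,\phi\rangle\,d\mu$ vanishes by adjunction with $\delta_B\phi=0$, while $\int_M\langle\nabla_{\kappa_B^\sharp}\phi,\phi\rangle\,d\mu=\tfrac12\int_M\kappa_B^\sharp(|\phi|^2)\,d\mu$ is handled by integration by parts on $(M,g_M)$, controlled using $d\kappa_B=0$ and the Rummler formula, to yield a non-negative (or vanishing) contribution on harmonic $\phi$. With this in hand, the displayed identity forces $\langle F(\phi),\phi\rangle\equiv0$, whence $\phi\equiv0$ by positive-definiteness of $F$, proving $H_B^r(\mathcal F)=\{0\}$. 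The Ricci corollary follows by specialising to $r=1$ and invoking the explicit identification $F(\phi)^\sharp={\rm Ric}^Q(\phi^\sharp)$ noted after~(2.6).
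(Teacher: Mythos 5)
Your overall strategy---identify $H_B^r(\mathcal F)$ with $\ker\Delta_B$ via Theorem 2.1, pair the Weitzenb\"ock formula (2.6) with $\phi$ in $L^2$, and use positivity of $F$---is the standard one, and your treatment of the Lie-derivative half of the cross term is correct: since $d_B\phi=0$, Cartan's formula gives $L_{\kappa_B^\sharp}\phi=d_B(\kappa_B^\sharp\lrcorner\phi)$, which pairs to zero against $\phi$ because $\delta_B\phi=0$ (the detour through $\kappa_B^\sharp\lrcorner\phi=-\delta_T\phi$ is harmless but unnecessary). The genuine gap is the remaining piece. Integration by parts gives $\int_M\langle\nabla_{\kappa_B^\sharp}\phi,\phi\rangle=\tfrac12\int_M\kappa_B^\sharp(|\phi|^2)=\tfrac12\int_M|\phi|^2\,\delta_B\kappa_B$, and neither $d\kappa_B=0$ nor Rummler's formula gives this a sign: $\delta_B\kappa_B=\delta_T\kappa_B+|\kappa_B|^2$ is not pointwise signed for a general bundle-like metric (indeed $\int_M\delta_B\kappa_B=0$, so it changes sign unless it vanishes identically), and there is no a priori correlation between $\delta_B\kappa_B$ and $|\phi|^2$ even for harmonic $\phi$. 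So the assertion that this contribution is ``non-negative (or vanishing) on harmonic $\phi$'' is exactly the hard point of the theorem, and as written it is unproved.

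The way this is closed in the source the paper cites for the proof, \cite{JR1}, is by a normalization of the metric rather than by an estimate: both sides of the statement are insensitive to the choice of bundle-like metric with fixed transverse part, since $H_B^r(\mathcal F)$ is a smooth invariant (Remark 2.2) and $F$ is built from $R^Q$, hence depends only on $g_Q$. By the results of Dom\'inguez and of March--Min-Oo--Ruh \cite{MO} (see also \cite{Al}), one may replace $g_M$ by a bundle-like metric with the same $g_Q$ for which $\kappa=\kappa_B$ is basic-harmonic, in particular $\delta_B\kappa_B=0$. With that normalization your computation yields $\int_M\langle A_{\kappa_B^\sharp}\phi,\phi\rangle=-\tfrac12\int_M|\phi|^2\,\delta_B\kappa_B=0$, and then $0=\Vert\nabla_{\rm tr}\phi\Vert^2_{L^2}+\int_M\langle F(\phi),\phi\rangle\ge c\Vert\phi\Vert^2_{L^2}$ forces $\phi=0$; the case $r=1$ follows from $F(\phi)^\sharp={\rm Ric}^Q(\phi^\sharp)$ as you say. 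Without this metric change (or an equivalent lemma controlling $\int_M|\phi|^2\,\delta_B\kappa_B$ for basic-harmonic $\phi$), your argument does not go through.
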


\subsection{The twisted basic cohomology}
Now, we recall the twisted differential operators $d_\kappa $ and $\delta_\kappa$  \cite{HR}, which are given by
\begin{align}\label{2-7}
d_\kappa = d_B -\frac12 \kappa_B\wedge, \quad \delta_\kappa =\delta_B -\frac12 \kappa_B^\sharp\lrcorner.
\end{align}
The operator $\delta_\kappa$ is a formal adjoint operator of $d_\kappa$ with respect to the global inner product.
Let  $\Delta_\kappa=d_\kappa \delta_\kappa + \delta_\kappa d_\kappa$ be the {\it twisted basic Laplacian}.  Then we have the following relation: for any basic form $\phi$, 
\begin{align}\label{2-8}
\Delta_\kappa\phi = \Delta_B\phi -\frac12\Big(L_{\kappa_B^\sharp} +( L_{\kappa_B^\sharp})^*\Big)\phi +\frac14|\kappa_B|^2\phi.
\end{align}
Also, we have the following relations.
\begin{lemma} \cite{HR} On $\Omega_B^r(\mathcal F)$, the following equations hold:

\begin{enumerate}
\item \ $ d_\kappa^2 = \delta_\kappa^2 =0,\quad [ d_\kappa,\Delta_\kappa]=[\Delta_\kappa,\delta_\kappa]=[\Delta_\kappa,\bar *]=0$.

\item \  $d_\kappa \bar * = (-1)^{r}\bar * \delta_\kappa,\quad \bar * d_\kappa =(-1)^{r+1}\delta_\kappa \bar *$.
\end{enumerate}

\end{lemma}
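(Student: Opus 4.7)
The plan is to prove the five identities by direct computation, leveraging the established formulas for $d_B$, $d_T$, $\delta_B$, $\delta_T$, $\bar *$ together with the closedness $d\kappa_B=0$ of the basic mean curvature form. First I would establish $d_\kappa^2=0$. Expanding,
\begin{equation*}
d_\kappa^2 = d_B^2 - \tfrac{1}{2}\bigl(d_B\circ(\kappa_B\wedge\cdot) + (\kappa_B\wedge)\circ d_B\bigr) + \tfrac{1}{4}(\kappa_B\wedge)^2,
\end{equation*}
the outer summands vanish because $d_B^2=0$ and $\kappa_B\wedge\kappa_B=0$, while the bracketed middle term equals $d\kappa_B\wedge\cdot$ by the graded Leibniz rule and therefore vanishes. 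The nilpotency $\delta_\kappa^2=0$ then follows by taking the $L^2$-adjoint of $d_\kappa^2=0$, since $\delta_\kappa$ is the formal adjoint of $d_\kappa$ (as recorded just after (\ref{2-7})).

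For part (2), the key ingredient is the pointwise identity
\begin{equation*}
\bar *(\kappa_B\wedge\bar *\phi) = c_{r,q}\,\kappa_B^\sharp\lrcorner\phi \quad\text{on } \Omega_B^r(\mathcal F),
\end{equation*}
with an explicit sign $c_{r,q}=\pm 1$, which is the transversal analogue of the standard Hodge-star identity $*(\alpha\wedge *\phi)=\pm\alpha^\sharp\lrcorner\phi$ and is obtained by unwinding the definition $\bar *\psi=(-1)^{p(q-r)}*(\psi\wedge\chi_{\mathcal F})$. Combined with the relation $\delta_\kappa = \delta_T + \tfrac{1}{2}\kappa_B^\sharp\lrcorner$ (a consequence of (\ref{2-1}) and (\ref{2-7})) and the formula $\delta_T=(-1)^{q(r+1)+1}\bar * d_B\bar *$, this upgrades to the clean statement $\delta_\kappa = (-1)^{q(r+1)+1}\bar * d_\kappa\bar *$ on $\Omega_B^r(\mathcal F)$. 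Applying $\bar *$ on either side and using $\bar *^{\,2}=(-1)^{r(q-r)}$, a short parity computation then yields both intertwining formulas $d_\kappa\bar *=(-1)^r\bar *\delta_\kappa$ and $\bar * d_\kappa=(-1)^{r+1}\delta_\kappa\bar *$.

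The three commutators follow immediately. From $d_\kappa^2=0$ one has $d_\kappa\Delta_\kappa=d_\kappa\delta_\kappa d_\kappa=\Delta_\kappa d_\kappa$; the symmetric argument using $\delta_\kappa^2=0$ gives $[\Delta_\kappa,\delta_\kappa]=0$. For $[\Delta_\kappa,\bar *]=0$, expand $\bar *\Delta_\kappa=\bar * d_\kappa\delta_\kappa+\bar *\delta_\kappa d_\kappa$ and push $\bar *$ past each factor via part (2); the two intertwining signs multiply to $+1$ on each summand, producing $\Delta_\kappa\bar *$. The main obstacle is the sign bookkeeping in part (2): pinning down the exponent in the pointwise identity $\bar *(\kappa_B\wedge\bar *\phi)=\pm\kappa_B^\sharp\lrcorner\phi$ and then carefully propagating the $\bar *^{\,2}$ signs so that the final parities collapse to the clean $(-1)^r$ and $(-1)^{r+1}$.
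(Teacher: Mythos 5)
Your argument is correct, and it is essentially the standard one: the paper gives no proof of this lemma (it is quoted from \cite{HR}), and your route—$d\kappa_B=0$ for $d_\kappa^2=0$, formal adjointness for $\delta_\kappa^2=0$, the "self-dual" relation $\delta_\kappa=(-1)^{q(r+1)+1}\bar{*}\,d_\kappa\bar{*}$ together with $\bar{*}^2=(-1)^{r(q-r)}$ for part (2), and then formal algebra for the commutators—is exactly how this is done there; the sign bookkeeping you flag does indeed collapse to $(-1)^r$ and $(-1)^{r+1}$. One simplification: the pointwise identity $\bar{*}(\kappa_B\wedge\,\cdot\,)\bar{*}=(-1)^{q(r+1)}\kappa_B^{\sharp}\lrcorner$ that you need requires no unwinding of the definition of $\bar{*}$, since subtracting the two formulas in (\ref{2-1}) gives $\kappa_B^{\sharp}\lrcorner=\delta_B-\delta_T=(-1)^{q(r+1)+1}\bar{*}(d_T-d_B)\bar{*}=(-1)^{q(r+1)}\bar{*}(\kappa_B\wedge\,\cdot\,)\bar{*}$ directly; also note that $d\kappa_B=0$ is available because $M$ is assumed compact.
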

Since $d_\kappa^2=0$, we can define the {\it twisted basic de Rham cohomology group} $H_\kappa^r(\mathcal F)$ by
\begin{align*}
H_\kappa^r(\mathcal F) ={\ker d_\kappa\over {\rm Im}\, d_\kappa}.
\end{align*}
Then we have the Hodge decomposition.
\begin{thm} \cite{HR} Let $(M,\mathcal F,g_Q)$ be as in Theorem2.1. Then
\begin{align*}
\Omega_B^r (\mathcal F) = {\mathcal H}_\kappa^r (\mathcal F) \oplus {\rm Im}\, d_\kappa \oplus {\rm Im}\, \delta_\kappa
\end{align*}
with finite dimensional ${\mathcal H}_\kappa^r(\mathcal F)=\ker\Delta_\kappa$. Moreover,  $ {\mathcal H}_\kappa^r (\mathcal F) \cong  H_\kappa^r (\mathcal F)$.
\end{thm}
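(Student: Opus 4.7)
The plan is to deduce the twisted Hodge decomposition from the standard one for $\Delta_B$ (Theorem~2.1) by treating $\Delta_\kappa$ as a lower-order perturbation of $\Delta_B$, and then to run the usual argument that produces a three-term decomposition out of a formally self-adjoint second-order operator with $d_\kappa^2 = \delta_\kappa^2 = 0$.

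First I would unpack formula (\ref{2-8}). Writing $L_{\kappa_B^\sharp}\phi = \nabla_{\kappa_B^\sharp}\phi + A_{\kappa_B^\sharp}(\phi)$ shows $L_{\kappa_B^\sharp}$ is a first-order differential operator acting on $\Omega_B^*(\mathcal F)$; its formal adjoint on basic forms is also first-order; and the multiplication by $\tfrac14|\kappa_B|^2$ is bounded. Hence $\Delta_\kappa$ differs from $\Delta_B$ by an operator of order at most one, and in particular has the same principal symbol. Since $\Delta_B$ is transversally strongly elliptic on basic forms (this is the input behind Theorem~2.1, due to El Kacimi-Alaoui \cite{EKH}), so is $\Delta_\kappa$. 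The operator $\Delta_\kappa$ is also formally self-adjoint, being manifestly of the form $d_\kappa\delta_\kappa+\delta_\kappa d_\kappa$ with $\delta_\kappa$ the formal adjoint of $d_\kappa$ (as noted after (\ref{2-7})).

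Next I would apply the Hodge-theoretic package for transversally elliptic self-adjoint operators on the Fréchet space $\Omega_B^*(\mathcal F)$ (the same package that yields Theorem~2.1 for $\Delta_B$): the kernel $\mathcal H_\kappa^r(\mathcal F)=\ker\Delta_\kappa$ is finite dimensional, the image $\operatorname{Im}\Delta_\kappa$ is closed, and one has the $L^2$-orthogonal decomposition
\begin{align*}
\Omega_B^r(\mathcal F) = \mathcal H_\kappa^r(\mathcal F)\oplus \operatorname{Im}\Delta_\kappa.
\end{align*}
Splitting $\Delta_\kappa\beta = d_\kappa(\delta_\kappa\beta) + \delta_\kappa(d_\kappa\beta)$ refines this to
\begin{align*}
\Omega_B^r(\mathcal F) = \mathcal H_\kappa^r(\mathcal F)\oplus \operatorname{Im} d_\kappa \oplus \operatorname{Im}\delta_\kappa.
\end{align*}
Orthogonality is routine: $\langle d_\kappa\alpha,\delta_\kappa\beta\rangle = \langle d_\kappa^2\alpha,\beta\rangle=0$ by Lemma~2.8(1); and for $\omega\in\mathcal H_\kappa^r$ both $d_\kappa\omega=0$ and $\delta_\kappa\omega=0$ (since $0=\langle\Delta_\kappa\omega,\omega\rangle=\|d_\kappa\omega\|^2+\|\delta_\kappa\omega\|^2$), which makes $\omega$ orthogonal to both $\operatorname{Im}\delta_\kappa$ and $\operatorname{Im}d_\kappa$.

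For the cohomology isomorphism, given $[\alpha]\in H_\kappa^r(\mathcal F)$ with $d_\kappa\alpha=0$, decompose $\alpha=\omega+d_\kappa\xi+\delta_\kappa\eta$. Applying $d_\kappa$ and pairing with $\eta$ yields $\|\delta_\kappa\eta\|^2=\langle d_\kappa\delta_\kappa\eta,\eta\rangle = \langle d_\kappa\alpha,\eta\rangle=0$, so $\alpha-\omega=d_\kappa\xi$ is exact. Uniqueness of the harmonic representative follows from the orthogonality above. The main obstacle is the analytic step in the second paragraph, namely verifying that the general El Kacimi--Alaoui Hodge theory for transversally elliptic basic operators applies to $\Delta_\kappa$; once its transverse ellipticity and formal self-adjointness are established, the rest is purely formal.
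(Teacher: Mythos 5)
Your proposal is correct, and it is essentially the argument behind the paper's citation: the paper gives no proof of its own but refers to Habib--Richardson \cite{HR}, who likewise observe that $\Delta_\kappa$ is a formally self-adjoint transversally elliptic operator with the same principal symbol as $\Delta_B$ (the extra terms in (\ref{2-8}) being of order at most one), invoke the standard basic Hodge theory of El Kacimi--Alaoui/Kamber--Tondeur/Park--Richardson, and then run the same formal splitting of $\operatorname{Im}\Delta_\kappa$ into $\operatorname{Im}d_\kappa\oplus\operatorname{Im}\delta_\kappa$ and the harmonic-representative argument you give. No gaps; your identification of the transverse ellipticity step as the only nontrivial analytic input is exactly right.
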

And  we have the Poincar\'e duality for $d_\kappa$-cohomology. That is,
\begin{thm} \cite{HR} (Poincar\'e duality for $d_\kappa$-cohomology)  On a compact Riemannian manifold with a Riemannian foliation $\mathcal F$, we have
\begin{align*}
H_\kappa^r (\mathcal F)\cong  H_\kappa^{q-r} (\mathcal F).
\end{align*}
\end{thm}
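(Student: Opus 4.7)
The plan is to derive the duality as an immediate consequence of the Hodge decomposition (Theorem 2.6) together with the commutation relations already recorded in Lemma 2.5. The idea is the standard Hodge-theoretic one: realize each cohomology group as the space of $\Delta_\kappa$-harmonic basic forms of the appropriate degree, and then use the transversal Hodge star $\bar *$ to identify the two spaces.

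First, by Theorem 2.6 we have canonical isomorphisms $H_\kappa^r(\mathcal F)\cong\mathcal H_\kappa^r(\mathcal F)=\ker\Delta_\kappa|_{\Omega_B^r(\mathcal F)}$ and $H_\kappa^{q-r}(\mathcal F)\cong \mathcal H_\kappa^{q-r}(\mathcal F)$. So it suffices to exhibit a linear isomorphism $\mathcal H_\kappa^r(\mathcal F)\xrightarrow{\sim}\mathcal H_\kappa^{q-r}(\mathcal F)$.

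Next, recall that $\bar *\colon\Omega_B^r(\mathcal F)\to\Omega_B^{q-r}(\mathcal F)$ is a pointwise $C^\infty(M)$-linear isomorphism, since $\bar *^2=(-1)^{r(q-r)}\,\mathrm{id}$. I will then invoke Lemma 2.5(1), which gives $[\Delta_\kappa,\bar *]=0$. It follows at once that $\bar *$ sends $\ker\Delta_\kappa|_{\Omega_B^r}$ into $\ker\Delta_\kappa|_{\Omega_B^{q-r}}$, and by applying $\bar *$ once more (up to the sign $(-1)^{r(q-r)}$) one obtains a two-sided inverse. Hence $\bar *$ restricts to an isomorphism $\mathcal H_\kappa^r(\mathcal F)\cong\mathcal H_\kappa^{q-r}(\mathcal F)$, which combined with the identifications in the previous step yields $H_\kappa^r(\mathcal F)\cong H_\kappa^{q-r}(\mathcal F)$.

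Since the Hodge decomposition and the identity $[\Delta_\kappa,\bar *]=0$ have already been placed at our disposal, there is essentially no obstacle left; the real content lies in Theorem 2.6 and Lemma 2.5. If one wanted to verify the commutation relation directly rather than quote it, the main check would be to combine the two relations $d_\kappa\bar *=(-1)^r\bar *\delta_\kappa$ and $\bar *d_\kappa=(-1)^{r+1}\delta_\kappa\bar *$ of Lemma 2.5(2) and unwind $\Delta_\kappa=d_\kappa\delta_\kappa+\delta_\kappa d_\kappa$, paying careful attention to the degree-dependent signs; this is the only step requiring any genuine care.
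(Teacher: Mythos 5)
Your argument is correct and is exactly the standard Hodge-theoretic proof underlying this result: the paper itself states it without proof (citing Habib--Richardson), and the intended argument is precisely yours, namely $H_\kappa^r(\mathcal F)\cong\ker\Delta_\kappa|_{\Omega_B^r(\mathcal F)}$ together with $[\Delta_\kappa,\bar *]=0$ and $\bar *^2=(-1)^{r(q-r)}\mathrm{id}$, so that $\bar *$ restricts to an isomorphism of the harmonic spaces in complementary degrees. The only slip is bookkeeping: in this paper the Hodge decomposition for $d_\kappa$ is Theorem 2.5 and the commutation relations are Lemma 2.4 (not Theorem 2.6 and Lemma 2.5), and one should note the standing hypotheses (compact, transversally oriented, bundle-like metric) needed for $\bar *$ and the decomposition.
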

\begin{rem} {\rm Theorem 2.6 resolves the problem of the failure of Poincar\'e duality to hold for standard basic de Rham cohomology $H_B^r(\mathcal F)$ (cf. (\ref{twistedduality}))}.
\end{rem}
\begin{thm} \cite{HR} (Tautness Theorem)  Let $(M,\mathcal F,g_Q)$ be as in Theorem 2.1. Then  $\mathcal F$ is taut if and only if $H_\kappa^0(\mathcal F)\cong H_\kappa^{q}(\mathcal F) \ne\{0\}$.
\end{thm}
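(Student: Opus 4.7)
The plan is to combine \'Alvarez L\'opez's characterization of tautness for Riemannian foliations on compact manifolds---namely, that $\mathcal F$ is taut if and only if the \'Alvarez class $[\kappa_B]\in H_B^1(\mathcal F)$ vanishes, i.e., $\kappa_B = d_B f$ for some basic function $f$---with an elementary unique-continuation argument for the first-order equation $d_B g = \tfrac12\kappa_B g$ that any $d_\kappa$-closed basic function must satisfy.

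For the forward direction, suppose $\mathcal F$ is taut and write $\kappa_B = d_B f$. Set $g = e^{f/2}$, a strictly positive basic function. A direct computation gives
\begin{equation*}
d_\kappa g = d_B\bigl(e^{f/2}\bigr) - \tfrac12\kappa_B\, e^{f/2} = \tfrac12(d_B f)\,e^{f/2} - \tfrac12\kappa_B\,e^{f/2} = 0,
\end{equation*}
while $\delta_\kappa g = 0$ for trivial degree reasons. Hence $g\in \mathcal H_\kappa^0(\mathcal F)$, so $H_\kappa^0(\mathcal F)\neq 0$, and Theorem~2.6 then supplies the isomorphism $H_\kappa^q(\mathcal F)\cong H_\kappa^0(\mathcal F)$.

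For the converse, I would pick a nonzero $g\in\ker d_\kappa\cap\Omega_B^0(\mathcal F)$, which satisfies $d_B g = \tfrac12\kappa_B g$, and aim to show $g$ vanishes nowhere. Since $d\kappa_B = 0$, in any foliation chart with transverse coordinates $y$ we may write $\kappa_B = dF$ for a local basic function $F(y)$, and then
\begin{equation*}
d_B\bigl(g\,e^{-F/2}\bigr) = \bigl(d_B g - \tfrac12\kappa_B g\bigr)e^{-F/2} = 0,
\end{equation*}
so $g = c\,e^{F/2}$ locally. If $g(x_0)=0$ for some $x_0$ in the chart, then $c=0$ and hence $g\equiv 0$ throughout the chart. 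Thus $\{g=0\}$ is open as well as closed in $M$, and connectedness together with $g\not\equiv 0$ force $\{g=0\}=\emptyset$. Replacing $g$ by $-g$ if necessary, $g>0$ globally, so $\kappa_B = d_B(2\log g)$ is exact, $[\kappa_B]=0$, and $\mathcal F$ is taut.

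The main obstacle is the unique-continuation step just outlined: one must check that the local representation $g = c\,e^{F/2}$ really does propagate a single zero of $g$ to an entire flow box, and that the resulting saturated zero set of the basic function $g$ is then forced to be all of $M$ by connectedness. Everything else---\'Alvarez L\'opez's theorem, the one-line verification that $d_\kappa(e^{f/2}) = 0$, and Poincar\'e duality (Theorem~2.6)---is essentially formal.
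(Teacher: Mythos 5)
Your argument is correct, and in fact the paper offers no proof of this statement at all: Theorem 2.8 is quoted from \cite{HR}, so the only meaningful comparison is with that source, whose proof runs along essentially the same lines as yours (reduce $H_\kappa^0(\mathcal F)\ne\{0\}$ to the vanishing of the \'Alvarez class and invoke the tautness criterion). Your two nonformal ingredients check out: the unique-continuation step is sound, since $\kappa_B$ is closed on a compact manifold, so on a connected foliation chart $\kappa_B=d_BF$ with $F$ basic, $g e^{-F/2}$ is $d_B$-closed hence constant, and a zero of $g$ therefore forces $g\equiv 0$ on the chart, making $\{g=0\}$ open and closed; and the reduction of the biconditional to $H_\kappa^0(\mathcal F)\ne\{0\}$ via Theorem 2.6 is legitimate and non-circular. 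Two points you should make explicit: you are leaning on \'Alvarez L\'opez's theorem (the class $[\kappa_B]\in H_B^1(\mathcal F)$ is metric-independent and vanishes exactly when $\mathcal F$ is taut, reference \cite{Al}), which is the genuinely deep ingredient here and should be cited rather than treated as folklore; and connectedness of $M$, which your open-closed argument uses and which is implicit in the hypotheses (it appears explicitly only in Theorem 2.9), since for disconnected $M$ the statement fails as a nowhere-taut component would be invisible to a class supported on a taut one.
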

 From (\ref{2-6}) and (\ref{2-8}), we have the Weitzenb\"ock formula \cite{HR} for the twisted basic Laplacian $\Delta_\kappa$. Namely,   for any basic form $\phi$,
\begin{align*}
\Delta_\kappa \phi = \nabla_{\rm tr}^*\nabla_{\rm tr}\phi + F(\phi) + \frac14|\kappa_B|^2\phi.
\end{align*}
Then we have the following theorem.
\begin{thm} \cite{HR}  Let $(M,\mathcal F,g_Q)$ be a Riemannian foliation on a  compact, connected manifold $M$ with a bundle-like metric such that the mean curvature form $\kappa$ is basic-harmonic. Then:

(1) if the operator $F + {\frac14}|\kappa|^2$  is strictly positive, then $H_\kappa^r (\mathcal F)=\{0\}$.

(2) if the transversal Ricci curvature ${\rm Ric}^Q$ is non-negative and either $M$ is nontaut or  ${\rm Ric}^Q$ is positive at least one point,  then $H_\kappa^1(\mathcal  F) =\{0\}$.

(3) suppose that the transversal sectional curvatures are nonnegative and positive at least one point. If  $\mathcal F$ is nontaut, then $H_\kappa^r(\mathcal F)=\{0\}$ for $1<r<q$.
\end{thm}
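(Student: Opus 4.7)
The unified tool for all three parts is the Weitzenböck identity stated just before the theorem, combined with the Hodge decomposition of Theorem 2.5 which identifies $H_\kappa^r(\mathcal F)$ with $\mathcal H_\kappa^r(\mathcal F)=\ker\Delta_\kappa$. So the plan is: pick $\phi\in\mathcal H_\kappa^r(\mathcal F)$, pair the identity
\begin{equation*}
\Delta_\kappa\phi=\nabla_{\rm tr}^*\nabla_{\rm tr}\phi+F(\phi)+\tfrac14|\kappa_B|^2\phi
\end{equation*}
with $\phi$ in the global $L^2$ inner product, and use that $\langle\!\langle\nabla_{\rm tr}^*\nabla_{\rm tr}\phi,\phi\rangle\!\rangle=\|\nabla_{\rm tr}\phi\|^2\ge 0$ (non-negativity and self-adjointness of $\nabla_{\rm tr}^*\nabla_{\rm tr}$, as recalled from \cite{JU1}). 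The integrated identity reads
\begin{equation*}
0=\|\nabla_{\rm tr}\phi\|^2+\int_M\big\langle F(\phi)+\tfrac14|\kappa_B|^2\phi,\phi\big\rangle\,\mu.
\end{equation*}

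For part (1), the hypothesis that $F+\tfrac14|\kappa|^2$ is strictly positive (pointwise, as a bundle endomorphism) makes the second integrand bounded below by $c|\phi|^2$ with $c>0$; together with the first non-negative summand this forces $\phi\equiv 0$, hence $H_\kappa^r(\mathcal F)=\{0\}$.

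For parts (2) and (3) I would instead read the identity as saying that all three summands on the right vanish, so $\phi$ is \emph{parallel} in the sense $\nabla_{\rm tr}\phi=0$ (hence $|\phi|$ is constant), the pointwise quantity $\langle F(\phi),\phi\rangle$ vanishes, and $|\kappa_B|^2|\phi|^2\equiv 0$. In (2) I exploit $F(\phi)^\sharp={\rm Ric}^Q(\phi^\sharp)$ for a basic $1$-form: if ${\rm Ric}^Q$ is positive at some point $x$, then $\langle{\rm Ric}^Q(\phi^\sharp),\phi^\sharp\rangle(x)>0$ unless $\phi(x)=0$, and parallelism then propagates $\phi\equiv 0$. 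If instead $\mathcal F$ is nontaut, the Tautness Theorem 2.8 gives $H_\kappa^0(\mathcal F)=\{0\}$; since $\kappa_B\equiv 0$ would make $d_\kappa=d_B$ and put the constants into $H_\kappa^0$, we must have $|\kappa_B|^2(x)>0$ at some $x$, and the identity $|\kappa_B|^2|\phi|^2\equiv 0$ again kills $\phi$ at $x$ and therefore everywhere.

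For part (3) the only new ingredient is a pointwise non-negativity of the curvature endomorphism $F$ on basic $r$-forms for $1<r<q$ under the assumption that the transversal sectional curvatures are non-negative; this is the foliated transcription of the classical Gallot-Meyer estimate, which one checks in a local orthonormal transverse frame from the defining formula $F(\phi)=\sum_{a,b}\theta^a\wedge E_b\lrcorner R^Q(E_b,E_a)\phi$. Granted that, the same vanishing scheme as in (2) applies: harmonicity forces $\phi$ parallel and $|\kappa_B|^2\phi=0$, and nontautness supplies a point where $|\kappa_B|^2>0$, so $\phi\equiv 0$. The main obstacle is really just part (3): converting transversal sectional curvature bounds into positivity of $F$ on $r$-forms is the one step that is not immediate from the Weitzenböck formula alone and requires the Gallot-Meyer style algebraic manipulation adapted to the transverse Levi-Civita connection.
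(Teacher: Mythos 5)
Your parts (1) and (2) are sound and follow what is clearly the intended route: the paper itself does not prove this theorem (it is quoted from \cite{HR}), but the Weitzenb\"ock formula $\Delta_\kappa\phi=\nabla_{\rm tr}^*\nabla_{\rm tr}\phi+F(\phi)+\frac14|\kappa_B|^2\phi$ stated immediately before it, together with Theorem 2.5 identifying $H_\kappa^r(\mathcal F)$ with $\ker\Delta_\kappa$, is the engine. Your integrated identity, the use of $F(\phi)^\sharp={\rm Ric}^Q(\phi^\sharp)$ on $1$-forms, the constancy of $|\phi|$ coming from $\nabla_{\rm tr}\phi=0$ (together with $\phi$ being basic and $M$ connected), and the observation that nontautness forces $\kappa_B\not\equiv 0$, so that $|\kappa_B|^2|\phi|^2\equiv 0$ kills $\phi$ at a point and hence everywhere, are all correct. (Note in passing that the hypothesis that $\kappa$ is basic-harmonic is precisely what makes the quoted Weitzenb\"ock formula valid when one combines (2.6) with (2.8); you use the formula as stated, which is legitimate here.)

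Part (3), however, has a genuine gap, and it is exactly the step you defer: nonnegative transversal sectional curvature does \emph{not} imply that the Weitzenb\"ock endomorphism $F$ is nonnegative on basic $r$-forms. The Gallot--Meyer estimate you invoke requires nonnegativity (resp.\ positivity) of the curvature \emph{operator} on $\Lambda^2Q^*$, which is strictly stronger than a sectional curvature bound; for $2\le r\le q-2$ the quantity $\langle F(\phi),\phi\rangle$ involves components $g_Q(R^Q(E_a,E_b)E_c,E_d)$ with four distinct indices that sectional curvatures do not control, and this is precisely why the classical Bochner technique fails to give Betti-number vanishing under sectional curvature hypotheses. So ``the foliated transcription of Gallot--Meyer'' is not an algebraic manipulation one can simply carry out; as stated, the needed pointwise inequality is false in general. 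A symptom of the mismatch is that your argument for (3) never uses the hypothesis that the curvature is positive at some point. A correct argument must use the hypotheses differently: for instance, since $d_B\kappa_B=0$ and $\delta_B\kappa_B=0$, the Weitzenb\"ock formula (2.6) applied to $\kappa_B$ itself (where the $A_{\kappa_B^\sharp}$-term integrates to zero because $\int_M\langle d_B|\kappa_B|^2,\kappa_B\rangle=\int_M|\kappa_B|^2\,\delta_B\kappa_B=0$) shows under ${\rm Ric}^Q\ge 0$ that $\kappa_B$ is parallel, and one must then exploit this extra structure and the sectional-curvature hypothesis on the relevant planes, or else strengthen the hypothesis to one on the transversal curvature operator; the blanket claim $F\ge 0$ on $r$-forms cannot be the bridge. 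As written, part (3) of your proposal does not go through.
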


\section{The  twisted basic Dolbealt cohomology }
\subsection{The basic Dolbeault cohomology}
In this section, we generally use the same notation and cite elementary results from \cite[Section 3]{JJ} and \cite{NT}.
Let $(M,\mathcal F,g_Q,J)$ be a  transverse
K\"ahler foliation of codimension $q=2n$ on a Riemannian manifold $M$ with a holonomy invariant transverse Hermitian metric $g_Q$ and an  almost complex structure $J$ 
on $Q$ such that  $\nabla J=0$,
with $\nabla$ being the transversal Levi-Civita connection on $Q$, extended in the usual way to tensors \cite{NT}. 
In some of what follows, we will merely need the fact that the foliation is transverse Hermitian
(all of the above, merely requiring $J$ is integrable and not that $\nabla J=0$), and other times we will 
need the full power of the K\"ahler condition $\nabla J=0$. The  basic K\"ahler form $\omega$ is given by
\begin{equation*}\label{3-1}
\omega(X,Y)=g_Q(\pi(X),J\pi(Y))
\end{equation*}
for any vecgor fields $X, Y\in TM$.
Locally, the basic K\"ahler form $\omega$ may be expressed by
\begin{align*}
\omega = -\frac12\sum_{a=1}^{2n}\theta^a\wedge J\theta^a,
\end{align*}
where $\{\theta^a\}_{a=1,\cdots,2n}$ is a local orthonormal frame of $Q^*$.
Here, we extend $J$ to elements of $Q^*$ by setting $(J\phi)(X)=-\phi(JX)$
for any $X\in Q_x$ and $\phi\in Q_x^*$. 
When it is convenient, we will also refer to the bundle map $J':TM\to TM$ 
defined by $J'(v)=J(\pi(v))$ and 
abuse notation by denoting $J=J'$. Similarly, we sometimes will act on all of 
$T^*M$ using the symbol $J$.
We note that all of the above is true for transverse Hermitian foliations, but the form $\omega$ is not closed
unless the foliation is K\"ahler.

Let $Q^C=Q\otimes\mathbb C$ be the complexified normal bundle and let 
\begin{align*}
Q^{1,0}=\{Z\in Q^C|JZ=iZ\},\quad Q^{0,1}=\{Z\in Q^C | JZ=-iZ\}.
\end{align*}
The element of $Q^{1,0}$ (resp. $Q^{0,1}$) is called a {\it complex normal vector field of type} $(1,0)$ (resp. (0,1)).
Then $Q^C = Q^{1,0} \oplus Q^{0,1}$.
Now, let $Q^*_C$ be the real dual bundle of $Q^C$, defined at each $x\in M$ to be the $\mathbb{C}$-linear maps 
from $Q_x^C$ to $\mathbb{C}$. Then  $Q_C^* =Q_{1,0}\oplus Q_{0,1}$, where
\begin{align*}
Q_{1,0}=\{\theta+i J\theta |\ \theta\in Q^*\}\quad \text{and}\quad 
Q_{0,1}=\{\theta-iJ\theta |\ \theta\in Q^*\}.
\end{align*}
Let $\Lambda^{r,s}_CQ^*$ be the subspace of $\Lambda Q_C^*$ spanned by $\xi\wedge\eta$, where $\xi\in\Lambda^r Q_{1,0}$ and $\eta\in\Lambda^s Q_{0,1}$. The sections of $\Lambda^{r,s}_CQ^*$ are said to be  {\it forms of type $(r,s)$}. Let $\Omega_B^{r,s}(\mathcal F)$ be the set of the basic forms of type $(r,s)$.
Let $\{E_a,JE_a\}_{a=1,\cdots,n}$ be a local orthonormal  frame of $Q$ and $\{\theta^a,J\theta^a\}_{a=1,\cdots,n}$ be their dual basic forms on $Q^*$.  Let $V_a ={1\over\sqrt 2}(E_a -iJE_a)$ and $\omega^a ={1\over\sqrt 2}(\theta^a +iJ\theta^a) $. Then 
\begin{align*}
\omega^a(V_b)=\overline\omega^a(\overline V_b)=\delta_{ab},\ \omega^a(\overline V_b)=\overline\omega^a(V_b)=0.
\end{align*}
A frame fields $\{V_a\}_{a=1,\cdots,n}$ is a local orthonormal  frame of $Q^{1,0}$, which is called a {\it normal frame field of type $(1,0)$}, and $\{\omega^a\}_{a=1,\cdots,n}$ is a dual   frame of $Q_{1,0}$. 

Now, we extend the connection $\nabla$ on $Q$ in the natural way so that $\nabla_XY$ is defined for any $X\in\Gamma (TM\otimes \mathbb{C})$ and any $Y\in \Gamma (Q^C)$.
We further extend it to differential forms, requiring that $\nabla$ is a Hermitian connection, i.e., for any $V\in Q^C$ and any $\phi,\psi\in\Omega_B^{r,s}(\mathcal{F})$,
\begin{align*}
V\langle\phi,\psi\rangle =\langle\nabla_V\phi,\psi\rangle +\langle\phi,\nabla_{\overline V}\psi\rangle.
\end{align*}
It is an easy exercise to show that for any complex vector field $X$, $\nabla_X$ preserves the $(r,s)$ type of the form or vector field.

Now, the transversal star operator $\bar*:\Omega_B^{r,s}(\mathcal F)\to \Omega_B^{n-s,n-r}(\mathcal F)$  on $\Omega_B^{*,*}(\mathcal F)$  is given by
\begin{align*}
\phi\wedge\bar *\bar\psi = \langle\phi,\psi\rangle \nu 
\end{align*}
for any $\phi,\psi \in \Omega_B^{r,s}(\mathcal F)$, where $\nu ={\omega^n\over n!}$ is the transversal volume form. Then for any $\phi\in\Omega_B^{r,s}(\mathcal F)$,
\begin{align*}
\overline{\bar*\phi} =\bar*\bar\phi,\quad \bar*^2 \phi = (-1)^{r+s}\phi.
\end{align*}
From (\ref{2-1}), the adjoint operators $\delta_B $ and $\delta_T$ of $d_B$ and $d_T$  are given  by
\begin{align}\label{deltabtformulas}
\delta_B = - \bar* d_T \bar*, \quad\delta_T = -\bar * d_B\bar *,
\end{align}
respectively.   Note that  $d_B=\partial_B+\bar \partial_B$ and $d_T =\partial_T + \bar\partial_T$, where
 \begin{align}\label{3-1-1}
\partial_T\phi =\partial_B\phi -\kappa_B^{1,0}\wedge\phi,\quad \bar\partial_T\phi =\bar\partial_B\phi -\kappa_B^{0,1}\wedge\phi,
\end{align}
where $\kappa_B^{1,0} = \frac 12 (\kappa_B + i J\kappa_B ) \in\Omega_B^{1,0}(\mathcal{F})$ and $\kappa_B^{0,1} =\overline{\kappa_B^{1,0}}$ \cite{JR1}.  

Let  $\partial_T^*$, $\bar\partial_T^*$, $\partial_B^*$ and $\bar\partial_B^*$ be the formal adjoint operators of $\partial_T,\ \bar\partial_T,\ \partial_B$ and $\bar\partial_B$, respectively, on the space of basic forms. Then 
  \begin{align}\label{3-2}
  \delta_B =\partial_B^* +\bar\partial_B^*,\quad \delta_T =\partial_T^* +\bar\partial_T^*,
  \end{align}
  and from (\ref{deltabtformulas}),
\begin{align}
&\partial_T^*\phi = -\bar *\bar\partial_B\bar *\phi,\quad \bar\partial_T^*\phi =-\bar *\partial_B\bar *\phi, \label{starBarDeltaForm1} \\
&\partial_B^* \phi = -\bar * \bar\partial_T \bar *\phi,\quad \bar\partial_B^* \phi =-\bar *\partial_T\bar*\phi. \label{starBarDeltaForm2} 
\end{align}
Since $\bar * (\kappa_B^{0,1}\wedge) \bar * = H^{1,0}\lrcorner$  \cite{JR1},  
from (\ref{3-1-1}) and (\ref{starBarDeltaForm2}), we have \cite{JR1}
\begin{align}\label{3-5}
\partial^*_B \phi =\partial_T^*\phi +H^{1,0}\lrcorner\,\phi,\quad 
\bar\partial_B^*\phi=\bar\partial_T^*\phi +H^{0,1}\lrcorner\,\phi,
\end{align} 
where $H^{1,0}=\frac 12(\kappa_B^\sharp-iJ\kappa_B^\sharp)$ and $ 
H^{0,1} = \overline{H^{1,0}}$.  Then from (\ref{2-2}) and (\ref{3-2})
\begin{align}\label{3-6}
\partial_T^*\phi = -\sum_{a=1}^n V_a\lrcorner \nabla_{\overline V_a}\phi,\quad \bar\partial_T^*\phi=  -\sum_{a=1}^n \overline V_a\lrcorner \nabla_{V_a}\phi.
\end{align} 
Since $\bar\partial_B^2 =0$,  we can define the {\it basic Dolbeault cohomology group} $H_B^{r,s}(\mathcal F)$ by
\begin{equation*}
H_B^{r,s}(\mathcal F) ={{\ker \bar\partial_B}\over {\rm Im}\:\bar\partial_B}.
\end{equation*}
Now, let  $\square_B =\partial_B\partial_B^* +\partial_B^*\partial_B$  and $\overline\square_B =\bar\partial_B\bar\partial_B^* +\bar\partial_B^*\bar\partial_B$. 
Then we have the basic Dolbeault decomposition.
\begin{thm} \cite{EK,JR1}  Let $(M,\mathcal F,g_Q,J)$ be a transverse K\"ahler foliation on a compact Riemannian manifold $M$ with a bundle-like metric.  Then
\begin{equation*}
\Omega_B^{r,s}(\mathcal F) = \mathcal H_B^{r,s}(\mathcal F)  \otimes {\rm Im}\, \bar \partial_B \otimes {\rm Im} \,\bar\partial_B^*,
\end{equation*}
where $\mathcal H_B^{r,s}(\mathcal F)=\ker\overline\square_B$ is finite dimensional. Moreover, $\mathcal H_B^{r,s}(\mathcal F)\cong H_B^{r,s}(\mathcal F)$.
\end{thm}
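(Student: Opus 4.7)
The plan is to deduce this transverse Dolbeault Hodge decomposition from the general Hodge theory for transversely strongly elliptic operators on Riemannian foliations, as developed by El Kacimi--Alaoui~\cite{EK}; the transverse K\"ahler hypothesis is used only to guarantee that $\bar\partial_B^2 = 0$ so that the cohomology is well-defined and to give a clean expression for the symbol.

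First, I would verify that $\overline\square_B$, acting on $\Omega_B^{r,s}(\mathcal F)$, is formally self-adjoint, non-negative, and transversely strongly elliptic. Formal self-adjointness is immediate from the definition. Non-negativity follows from the identity
\[
\langle\overline\square_B \phi, \phi\rangle = \|\bar\partial_B\phi\|^2 + \|\bar\partial_B^*\phi\|^2,
\]
which also shows that $\phi \in \ker\overline\square_B$ if and only if $\bar\partial_B\phi = \bar\partial_B^*\phi = 0$. Transverse strong ellipticity is checked by computing the transverse principal symbol from the local expressions (\ref{3-6}) in a normal frame of type $(1,0)$; as in the classical K\"ahler case, this symbol agrees with that of $\tfrac12\Delta_B$ up to mean-curvature terms which are of lower order in the transverse directions.

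With these three properties in hand, the transverse Hodge theorem of El Kacimi--Alaoui~\cite{EK} yields the finite dimensionality of $\mathcal H_B^{r,s}(\mathcal F)=\ker\overline\square_B$ together with the orthogonal splitting $\Omega_B^{r,s}(\mathcal F) = \mathcal H_B^{r,s}(\mathcal F)\oplus {\rm Im}\,\overline\square_B$. Since ${\rm Im}\,\overline\square_B\subseteq {\rm Im}\,\bar\partial_B + {\rm Im}\,\bar\partial_B^*$, and the three summands are pairwise orthogonal---the cross term $\langle\bar\partial_B\alpha,\bar\partial_B^*\beta\rangle=\langle\bar\partial_B^2\alpha,\beta\rangle=0$ vanishes, and any harmonic $\phi$ is orthogonal to both images since $\bar\partial_B\phi=\bar\partial_B^*\phi=0$---the claimed three-way decomposition follows.

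Finally, I would identify $\mathcal H_B^{r,s}(\mathcal F)$ with $H_B^{r,s}(\mathcal F)$ by sending each harmonic form to its cohomology class. Injectivity is immediate: a $\bar\partial_B$-exact harmonic form $\phi=\bar\partial_B\alpha$ satisfies $\|\phi\|^2=\langle\bar\partial_B^*\phi,\alpha\rangle=0$. For surjectivity, decompose a $\bar\partial_B$-closed representative as $\phi=\phi_H+\bar\partial_B\alpha+\bar\partial_B^*\beta$; applying $\bar\partial_B$ and using the orthogonality forces $\bar\partial_B\bar\partial_B^*\beta=0$, hence $\|\bar\partial_B^*\beta\|^2=0$, so $\phi$ differs from the harmonic $\phi_H$ only by a $\bar\partial_B$-exact form. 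The main obstacle is really the verification of the transverse strong ellipticity of $\overline\square_B$ and the appeal to the El Kacimi framework, since the compactness and regularity that underpin the decomposition come entirely from there; once these are available, the remainder is standard Hilbert-space bookkeeping.
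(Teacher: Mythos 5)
Your proposal is correct and follows essentially the same route as the paper, which does not prove this theorem directly but cites it to \cite{EK,JR1} and, for the analogous twisted decomposition (Theorem 3.9), likewise appeals to the transversal Hodge theory for transversally elliptic, formally self-adjoint basic operators: one checks that $\overline\square_B$ is such an operator, invokes El Kacimi's decomposition $\Omega_B^{r,s}(\mathcal F)=\ker\overline\square_B\oplus{\rm Im}\,\overline\square_B$ with finite-dimensional kernel, and then refines it to the three-way orthogonal splitting and the Hodge isomorphism exactly as you do. The only cosmetic remark is that $\bar\partial_B^2=0$ already follows from the transverse holomorphic (Hermitian) structure rather than the full K\"ahler condition, but this does not affect your argument.
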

 Generally,  the basic Laplacians do not satisfies the properties which hold on a ordinary  K\"ahler  manifold such as $\Delta = 2\square =2\overline\square$. But if $\mathcal F$ is taut, then $\Delta_B = 2\square_B = 2\overline\square_B$ \cite{JR1}. 

\subsection{The twisted basic Dolbeault cohomology}
Let   $\partial_\kappa: \Omega_B^{r,s}(\mathcal F)\to \Omega_B^{r+1,s}(\mathcal F)$ and $\bar{\partial}_\kappa:\Omega_B^{r,s}(\mathcal F)\to \Omega_B^{r,s+1}(\mathcal F)$ be defined  by
\begin{align}\label{3-7}
\partial_\kappa\phi = \partial_B\phi -\frac12  \kappa_B^{1,0}\wedge\phi,\quad
\bar{\partial}_\kappa\phi=\bar\partial_B\phi -\frac12 \kappa_B^{0,1}\wedge\phi,
\end{align}
respectively.  From (\ref{2-7}), it is trivial that  $d_\kappa =\partial_\kappa + \bar{\partial}_\kappa.$ 
Let  $\partial_\kappa^*$ and $\bar{\partial}_\kappa^*$ be  the formal adjoint operators of $\partial_\kappa$ and $\bar\partial_\kappa$, respectively. Then  we have the following.
\begin{prop}  On a transverse K\"ahler foliation, we have
\begin{align*}
   \partial_\kappa^* =\partial_B^* -\frac12 H^{1,0}\lrcorner,\quad
   \bar{\partial}_\kappa^*=\bar\partial_B^* -\frac12 H^{0,1}\lrcorner,\quad \delta_\kappa = \partial_\kappa^* +\bar\partial_\kappa^*.
   \end{align*}
   \end{prop}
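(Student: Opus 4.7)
The plan is to read off the two adjoint formulas from the already-known expression $\delta_\kappa=\delta_B-\tfrac12\kappa_B^\sharp\lrcorner$ of (\ref{2-7}) by decomposing both sides into pure bidegree components and matching. The only preliminary step is to verify that $\kappa_B^\sharp$ splits along types as $H^{1,0}+H^{0,1}$, and the only conceptual point is that formal adjoints preserve pure bidegree.

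First I would observe that $H^{1,0}\in Q^{1,0}$ and $H^{0,1}\in Q^{0,1}$: from the defining formula $H^{1,0}=\tfrac12(\kappa_B^\sharp-iJ\kappa_B^\sharp)$ one checks $JH^{1,0}=iH^{1,0}$, and similarly $JH^{0,1}=-iH^{0,1}$. Since $H^{1,0}+H^{0,1}=\kappa_B^\sharp$, this gives the operator identity $\kappa_B^\sharp\lrcorner=H^{1,0}\lrcorner+H^{0,1}\lrcorner$, in which the first summand has pure bidegree $(-1,0)$ and the second pure bidegree $(0,-1)$. Combining this with $\delta_B=\partial_B^*+\bar\partial_B^*$ from (\ref{3-2}) and substituting into (\ref{2-7}) produces
\begin{equation*}
\delta_\kappa \;=\; \Big(\partial_B^* - \tfrac12 H^{1,0}\lrcorner\Big) \;+\; \Big(\bar\partial_B^* - \tfrac12 H^{0,1}\lrcorner\Big),
\end{equation*}
a decomposition of $\delta_\kappa$ into pure bidegrees $(-1,0)$ and $(0,-1)$.

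On the other hand, $\partial_\kappa$ defined by (\ref{3-7}) is manifestly of pure bidegree $(1,0)$ and $\bar\partial_\kappa$ of pure bidegree $(0,1)$. Since the pointwise Hermitian inner product $\phi\wedge\bar*\bar\psi=\langle\phi,\psi\rangle\nu$ is orthogonal across distinct bidegrees (the pairing of $\Omega_B^{r,s}$ with $\Omega_B^{p,q}$ vanishes unless $(r,s)=(p,q)$), the formal adjoints $\partial_\kappa^*$ and $\bar\partial_\kappa^*$ are necessarily of pure bidegrees $(-1,0)$ and $(0,-1)$ respectively. Taking the adjoint of $d_\kappa=\partial_\kappa+\bar\partial_\kappa$ then gives $\delta_\kappa=\partial_\kappa^*+\bar\partial_\kappa^*$ as a pure-bidegree decomposition, and uniqueness of such a decomposition forces its summands to agree with the displayed ones above, proving all three claimed identities at once. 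The only delicate step is the bidegree-orthogonality argument for adjoints; if a direct route is preferred, one can instead compute $(\kappa_B^{1,0}\wedge)^*$ from the identity $\bar*(\kappa_B^{0,1}\wedge)\bar*=H^{1,0}\lrcorner$ of \cite{JR1} (and its complex conjugate), recovering the two formulas term by term from (\ref{3-7}).
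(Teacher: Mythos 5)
Your proposal is correct. The paper's own proof is the one-line remark that the identities follow from (\ref{2-7}) and (\ref{3-7}), i.e.\ presumably by taking formal adjoints term by term in $\partial_\kappa=\partial_B-\tfrac12\kappa_B^{1,0}\wedge$ and $\bar\partial_\kappa=\bar\partial_B-\tfrac12\kappa_B^{0,1}\wedge$, using that $(\kappa_B^{1,0}\wedge)^*=H^{1,0}\lrcorner$ and $(\kappa_B^{0,1}\wedge)^*=H^{0,1}\lrcorner$ (exactly the content of $\bar*(\kappa_B^{0,1}\wedge)\bar*=H^{1,0}\lrcorner$ and (\ref{3-5}), already established in the paper). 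You instead start from the known formula $\delta_\kappa=\delta_B-\tfrac12\kappa_B^\sharp\lrcorner$ of (\ref{2-7}) together with $\delta_B=\partial_B^*+\bar\partial_B^*$, split $\kappa_B^\sharp\lrcorner=H^{1,0}\lrcorner+H^{0,1}\lrcorner$ by type, and then invoke uniqueness of the decomposition of an operator into pure-bidegree pieces, having first checked that bidegree-orthogonality of the Hermitian pairing forces $\partial_\kappa^*$ and $\bar\partial_\kappa^*$ to have bidegrees $(-1,0)$ and $(0,-1)$; all of these steps are sound (in particular $JH^{1,0}=iH^{1,0}$ is correct with the paper's conventions, and $\delta_\kappa=\partial_\kappa^*+\bar\partial_\kappa^*$ comes out directly by additivity of adjoints, so nothing is circular). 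The payoff of your organization is that it uses only displayed facts ((\ref{2-7}), (\ref{3-2}), the type of $H^{1,0}$) and avoids recomputing the adjoint of the wedge operators; the direct route you sketch at the end is essentially the paper's intended argument and requires only the sign bookkeeping for $\bar*$ already done in deriving (\ref{3-5}). Either way the result follows, so the proposal fills the gap the paper leaves implicit.
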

   \begin{proof}  From (\ref{2-7}) and (\ref{3-7}),  the proofs  are easy.
   \end{proof}
Let $L:\Omega_B^r(\mathcal F)\to \Omega_B^{r+2}(\mathcal F)$ and $\Lambda:\Omega_B^r(\mathcal F)\to \Omega_B^{r-2}(\mathcal F)$ be given by
\begin{align*}
L(\phi)=\omega\wedge\phi,\quad\Lambda(\phi)=\omega\lrcorner\phi,
\end{align*}
respectively, where $(\xi_1 \wedge\xi_2)\lrcorner=\xi_2^\sharp\lrcorner \xi_1^\sharp\lrcorner$ for any basic 1-forms $\xi_i (i=1,2)$. 
 Trivially, $\langle L\phi,\psi\rangle =\langle\phi,\Lambda\psi\rangle$ and $\Lambda=-\bar*L\bar*$ \cite{CW}.  Also,  it is well-known  \cite{JJ} that 
\begin{align}\label{3-8}
[L,X\lrcorner]=JX^b\wedge,\quad [\Lambda,X^b\wedge]=- JX\lrcorner,\quad
[L,X^b\wedge]=[\Lambda,X\lrcorner]=0
\end{align}
for any vector field $X\in Q$.
From  (\ref{3-8}), we have the following.
\begin{prop} \cite{JJ} On a transverse K\"ahler foliation, we have 
\begin{align*}
& [L,d_B]=[\Lambda,\delta_B]=[L,\partial_B]=[L,\bar\partial_B] =[\Lambda,\partial_B^*]=[\Lambda,\bar\partial_B^*]=0,\\
&[L,\partial_B^*]=-i\bar\partial_T,\ [L,\bar\partial_B^*]=i\partial_T,\ [\Lambda,\partial_B]=-i\bar\partial_T^*,\ [\Lambda,\bar\partial_B]=i\partial_T^*.
\end{align*}
\end{prop}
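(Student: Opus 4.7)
The plan is to separate the six identities of Proposition 3.3 into three groups and prove each by a different method; the whole statement is the transverse analog of the classical K\"ahler identities on a complex manifold, with the twisting by $\kappa_B$ accounting for the difference between $d_T$-type and $d_B$-type operators.

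First I would establish the three vanishing $L$-commutators. Since $\nabla J = 0$ on a transverse K\"ahler foliation implies $\nabla\omega = 0$, and in particular $d_B\omega=0$, the Leibniz rule gives
\begin{align*}
d_B(L\phi) = d_B\omega\wedge\phi + \omega\wedge d_B\phi = L(d_B\phi),
\end{align*}
so $[L,d_B]=0$. Because $L$ sends type $(r,s)$ to $(r+1,s+1)$, splitting by bidegree separates this into $[L,\partial_B]=[L,\bar\partial_B]=0$. Next, for the three vanishing $\Lambda$-commutators I would simply pass to formal adjoints: $\Lambda$ is the pointwise adjoint of $L$ (equivalently, $\Lambda=-\bar *L\bar *$), while $\delta_B,\partial_B^*,\bar\partial_B^*$ are the formal adjoints of $d_B,\partial_B,\bar\partial_B$, so the identities in the first group immediately yield $[\Lambda,\delta_B]=[\Lambda,\partial_B^*]=[\Lambda,\bar\partial_B^*]=0$.

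The heart of the proof is the four non-trivial identities. I would work in a local unitary normal frame $\{V_a\}$ of $Q^{1,0}$ with dual $\{\omega^a\}$. Two ingredients drive the computation: (a) $\nabla\omega=0$ gives $[L,\nabla_X]=[\Lambda,\nabla_X]=0$ for any $X\in Q^C$; and (b) the pointwise commutators (\ref{3-8}) specialize, using $JV_a=iV_a$ and the $\mathbb C$-linear musical isomorphism $V_a^b = \bar\omega^a$, to $[L,V_a\lrcorner]=i\bar\omega^a\wedge$. Combining these with the local formula (\ref{3-6}) for $\partial_T^*$ gives
\begin{align*}
[L,\partial_T^*] \;=\; -\sum_a [L,V_a\lrcorner]\,\nabla_{\bar V_a} \;=\; -i\sum_a \bar\omega^a\wedge\nabla_{\bar V_a} \;=\; -i\bar\partial_B.
\end{align*}
Then I would insert the correction $\partial_B^* = \partial_T^* + H^{1,0}\lrcorner$ from (\ref{3-5}) and compute $[L,H^{1,0}\lrcorner] = i(H^{1,0})^b\wedge = i\kappa_B^{0,1}\wedge$, obtaining
\begin{align*}
[L,\partial_B^*] \;=\; -i\bar\partial_B + i\kappa_B^{0,1}\wedge \;=\; -i\bar\partial_T
\end{align*}
by the defining relation (\ref{3-1-1}). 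The identity $[L,\bar\partial_B^*]=i\partial_T$ follows by complex conjugation, and the two $\Lambda$-identities $[\Lambda,\partial_B]=-i\bar\partial_T^*$ and $[\Lambda,\bar\partial_B]=i\partial_T^*$ then follow by taking formal adjoints of the two $L$-identities just proved.

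The main obstacle is the bookkeeping: tracking the factors of $i$ and the $(1,0)/(0,1)$ type shifts that arise from $JV_a=iV_a$ and from the $\mathbb C$-linear musical, and then verifying that the correction term $H^{1,0}\lrcorner$ in (\ref{3-5}) is tuned so that $i\kappa_B^{0,1}\wedge$ promotes $-i\bar\partial_B$ exactly to $-i\bar\partial_T$ on the right-hand side (and symmetrically in the conjugate pair). Once these sign/type conventions are fixed, the remainder of the argument is formal manipulation using (\ref{3-8}), $[L,\nabla_X]=0$, and the adjoint formalism.
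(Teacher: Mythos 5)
Your proposal is correct and follows essentially the route the paper indicates: the paper states Proposition 3.3 as a consequence of the commutation relations (\ref{3-8}) (citing \cite{JJ}), and your argument is exactly that derivation, using (\ref{3-8}) together with the local expressions (\ref{3-6}), the relations (\ref{3-1-1}) and (\ref{3-5}), and adjunction/conjugation to pass between the identities. The sign and type bookkeeping in your computation (e.g.\ $[L,V_a\lrcorner]=i\bar\omega^a\wedge$, $(H^{1,0})^\flat=\kappa_B^{0,1}$, hence $[L,\partial_B^*]=-i\bar\partial_T$) checks out against the paper's conventions.
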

From  (\ref{3-8}) and Proposition 3.3, we have the following.
\begin{prop} On a  transverse K\"ahler foliation, we have 
\begin{align}
&[L, d_\kappa]=[\Lambda,\delta_\kappa]=[L,\partial_\kappa]=[L,\bar{\partial}_\kappa]=[\Lambda,\partial_\kappa^*]=[\Lambda,\bar{\partial}_\kappa^*]=0,\label{3-9}\\
&[L,\partial_\kappa^*]=-i\bar{\partial}_\kappa,\ [L,\bar{\partial}_\kappa^*]=i \partial_\kappa,\ [\Lambda,\partial_\kappa] =-i\bar{\partial}_\kappa^*,\ [\Lambda,\bar{\partial}_\kappa]=i\partial_\kappa^*.\label{3-10}
\end{align}
\end{prop}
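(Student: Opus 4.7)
The plan is to derive each of the twelve identities in (\ref{3-9}) and (\ref{3-10}) from the corresponding untwisted identity in Proposition 3.3 combined with the primitive bracket formulas in (\ref{3-8}). By Proposition 3.2 the four twisted operators $\partial_\kappa,\bar\partial_\kappa,\partial_\kappa^*,\bar\partial_\kappa^*$ differ from $\partial_B,\bar\partial_B,\partial_B^*,\bar\partial_B^*$ by the zeroth-order terms $-\tfrac12\kappa_B^{1,0}\wedge$, $-\tfrac12\kappa_B^{0,1}\wedge$, $-\tfrac12 H^{1,0}\lrcorner$, $-\tfrac12 H^{0,1}\lrcorner$. Every commutator with $L$ or $\Lambda$ therefore splits as a sum of two brackets: one that is already known from Proposition 3.3 and a second one of $L$ or $\Lambda$ against a pure wedge or interior-product operator.

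For the six vanishing relations in (\ref{3-9}), the basic piece of each commutator vanishes by Proposition 3.3, and the zeroth-order piece vanishes by the identities $[L,\xi\wedge]=0$ and $[\Lambda,X\lrcorner]=0$ from (\ref{3-8}), extended $\mathbb{C}$-linearly to the complex $1$-forms $\kappa_B^{1,0},\kappa_B^{0,1}$ and complex vector fields $H^{1,0},H^{0,1}$. Adding the $\partial$- and $\bar\partial$-versions then gives $[L,d_\kappa]=0$ and $[\Lambda,\delta_\kappa]=0$.

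For the four non-vanishing identities in (\ref{3-10}), the only extra input is the short linear-algebra check that, with respect to the real musical isomorphism,
\[ (H^{1,0})^b = \kappa_B^{0,1}, \qquad J\kappa_B^{0,1} = i\kappa_B^{0,1}, \]
together with the complex conjugates $(H^{0,1})^b = \kappa_B^{1,0}$ and $J\kappa_B^{1,0} = -i\kappa_B^{1,0}$. Using $[L,X\lrcorner]=JX^b\wedge$ from (\ref{3-8}) this gives $[L,H^{1,0}\lrcorner] = i\kappa_B^{0,1}\wedge$, and combining with $[L,\partial_B^*]=-i\bar\partial_T$ from Proposition 3.3 together with $\bar\partial_T = \bar\partial_\kappa - \tfrac12 \kappa_B^{0,1}\wedge$ (which follows from (\ref{3-1-1}) and (\ref{3-7})) yields
\[ [L,\partial_\kappa^*] = -i\bar\partial_T - \tfrac{i}{2}\kappa_B^{0,1}\wedge = -i\bar\partial_\kappa. \]
The companion identity $[L,\bar\partial_\kappa^*]=i\partial_\kappa$ follows by exchanging types (equivalently, by complex conjugation), and the two $\Lambda$-identities come from the parallel calculation using $[\Lambda,\xi\wedge]=-J\xi^\sharp\lrcorner$, $[\Lambda,\partial_B]=-i\bar\partial_T^*$, and $\bar\partial_T^* = \bar\partial_\kappa^* - \tfrac12 H^{0,1}\lrcorner$.

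The main obstacle is purely bookkeeping: keeping track of the $J$-eigenvalues, the switch between $(1,0)$ and $(0,1)$ types under the real musical isomorphism (which is exactly why $(H^{1,0})^b$ lands in $Q_{0,1}$ rather than $Q_{1,0}$), and the $\tfrac12$ factors built into the twisted operators. No new analytic ingredient beyond Proposition 3.3 and (\ref{3-8}) is required.
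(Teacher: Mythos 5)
Your proposal is correct and is essentially the paper's own argument: the paper proves Proposition 3.4 precisely by combining the untwisted commutators of Proposition 3.3 with the zeroth-order bracket identities (\ref{3-8}), which is exactly your decomposition. Your sign/eigenvalue bookkeeping ($ (H^{1,0})^\flat=\kappa_B^{0,1}$, $J\kappa_B^{0,1}=i\kappa_B^{0,1}$, and $\bar\partial_T=\bar\partial_\kappa-\tfrac12\kappa_B^{0,1}\wedge$) checks out, so you have merely made explicit the computation the paper leaves to the reader.
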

Let $\square_\kappa$ and $\overline\square_\kappa$
be  Laplace operators, which are defined by
\begin{align*}
\square_\kappa =\partial_\kappa \partial_\kappa^* + \partial_\kappa^*\partial_\kappa \quad{\rm and}\quad\overline\square_\kappa =  \bar\partial_\kappa\bar\partial_\kappa^* + \bar\partial_\kappa^*\bar\partial_\kappa,
\end{align*}
respectively. Trivially, $\square_\kappa$ and $\overline\square_\kappa$ preserve the types of the forms.
\begin{thm}  On a transverse K\"ahler foliation, we have
\begin{align*}
\square_\kappa = \overline\square_\kappa,\quad\Delta_\kappa= 2\square_\kappa =2\overline\square_\kappa.
\end{align*}
\end{thm}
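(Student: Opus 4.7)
The plan is to follow the standard K\"ahler identities argument, now using the twisted Kähler identities of Proposition 3.4 in place of the classical ones. The whole proof will reduce to algebraic manipulations with the commutation relations $[\Lambda,\partial_\kappa]=-i\bar\partial_\kappa^*$ and $[\Lambda,\bar\partial_\kappa]=i\partial_\kappa^*$ together with the bidegree structure.

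First I would record the nilpotency and anticommutation: $\partial_\kappa^2=0$, $\bar\partial_\kappa^2=0$, and $\partial_\kappa\bar\partial_\kappa+\bar\partial_\kappa\partial_\kappa=0$. These follow from $d_\kappa^2=0$ (Lemma 2.4(1)) by decomposing $0=d_\kappa^2=\partial_\kappa^2+(\partial_\kappa\bar\partial_\kappa+\bar\partial_\kappa\partial_\kappa)+\bar\partial_\kappa^2$ into its pure bidegrees $(2,0),(1,1),(0,2)$, which must each vanish since $\partial_\kappa$ and $\bar\partial_\kappa$ preserve/shift types by $(1,0)$ and $(0,1)$ respectively. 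Taking formal adjoints gives also $(\partial_\kappa^*)^2=(\bar\partial_\kappa^*)^2=0$ and $\partial_\kappa^*\bar\partial_\kappa^*+\bar\partial_\kappa^*\partial_\kappa^*=0$.

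Next I would handle the mixed cross-Laplacians. From (3.10), $\partial_\kappa^*=-i[\Lambda,\bar\partial_\kappa]$, so
\begin{align*}
\bar\partial_\kappa\partial_\kappa^*+\partial_\kappa^*\bar\partial_\kappa
=-i\bigl(\bar\partial_\kappa\Lambda\bar\partial_\kappa-\bar\partial_\kappa^2\Lambda+\Lambda\bar\partial_\kappa^2-\bar\partial_\kappa\Lambda\bar\partial_\kappa\bigr)=0,
\end{align*}
using $\bar\partial_\kappa^2=0$. The symmetric computation with $\bar\partial_\kappa^*=i[\Lambda,\partial_\kappa]$ and $\partial_\kappa^2=0$ yields $\partial_\kappa\bar\partial_\kappa^*+\bar\partial_\kappa^*\partial_\kappa=0$. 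Expanding $\Delta_\kappa=(\partial_\kappa+\bar\partial_\kappa)(\partial_\kappa^*+\bar\partial_\kappa^*)+(\partial_\kappa^*+\bar\partial_\kappa^*)(\partial_\kappa+\bar\partial_\kappa)$ and grouping terms, these two vanishings collapse the cross terms, leaving $\Delta_\kappa=\square_\kappa+\overline\square_\kappa$.

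Finally I would prove $\square_\kappa=\overline\square_\kappa$ by expanding each using the twisted Kähler identities. Writing out
\begin{align*}
\square_\kappa &= -i\partial_\kappa\Lambda\bar\partial_\kappa+i\partial_\kappa\bar\partial_\kappa\Lambda-i\Lambda\bar\partial_\kappa\partial_\kappa+i\bar\partial_\kappa\Lambda\partial_\kappa,\\
\overline\square_\kappa &= i\bar\partial_\kappa\Lambda\partial_\kappa-i\bar\partial_\kappa\partial_\kappa\Lambda+i\Lambda\partial_\kappa\bar\partial_\kappa-i\partial_\kappa\Lambda\bar\partial_\kappa,
\end{align*}
and substituting $\partial_\kappa\bar\partial_\kappa=-\bar\partial_\kappa\partial_\kappa$ in the $\overline\square_\kappa$ expression, term-by-term comparison yields $\square_\kappa=\overline\square_\kappa$. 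Combined with the previous step, $\Delta_\kappa=2\square_\kappa=2\overline\square_\kappa$.

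The main obstacle is really only bookkeeping: one must be careful with signs in the twisted Kähler identities and with the graded (anti)commutation of $\partial_\kappa$ and $\bar\partial_\kappa$. No new analytic ingredient is required beyond Proposition 3.4 and $d_\kappa^2=0$; the transverse Kähler condition entered precisely in establishing those identities, and from that point the argument is formally identical to the classical one on a Kähler manifold.
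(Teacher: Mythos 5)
Your proposal is correct and follows essentially the same route as the paper: deduce $\partial_\kappa^2=\bar\partial_\kappa^2=\partial_\kappa\bar\partial_\kappa+\bar\partial_\kappa\partial_\kappa=0$ from $d_\kappa^2=0$, use the twisted K\"ahler identities of Proposition 3.4 (\ref{3-10}) to kill the mixed terms $\bar\partial_\kappa\partial_\kappa^*+\partial_\kappa^*\bar\partial_\kappa$ and $\partial_\kappa\bar\partial_\kappa^*+\bar\partial_\kappa^*\partial_\kappa$ and to identify $\square_\kappa$ with $\overline\square_\kappa$, then expand $\Delta_\kappa=\square_\kappa+\overline\square_\kappa$. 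The only difference is organizational (you expand both Laplacians and compare, while the paper rewrites $i(\partial_\kappa\partial_\kappa^*+\partial_\kappa^*\partial_\kappa)$ directly into $i(\bar\partial_\kappa\bar\partial_\kappa^*+\bar\partial_\kappa^*\bar\partial_\kappa)$), and your sign bookkeeping checks out.
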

\begin{proof}
Since $d_\kappa^2=0$,  it is trivial that $\partial_\kappa^2 =\bar\partial_\kappa^2 =\partial_\kappa\bar\partial_\kappa +\bar\partial_\kappa\partial_\kappa=0$. 
From Proposition 3.4 (\ref{3-10}), we have
\begin{align*}
i(\partial_\kappa\partial_\kappa^* +\partial_\kappa^*\partial_\kappa)&=\partial_\kappa\Lambda\bar\partial_\kappa +\Lambda\bar\partial_\kappa\partial_\kappa -\partial_\kappa\bar\partial_\kappa\Lambda-\bar\partial_\kappa\Lambda\partial_\kappa\\
&=[\partial_\kappa,\Lambda]\bar\partial_\kappa +\bar\partial_\kappa[\partial_\kappa,\Lambda]\\
&=i(\bar\partial_\kappa^*\bar\partial_\kappa +\bar\partial_\kappa\bar\partial_\kappa^*)
\end{align*}
and 
\begin{align*}
i(\bar\partial_\kappa \partial_\kappa^* + \partial_\kappa^* \bar\partial_\kappa) = 
i(\partial_\kappa \bar\partial_\kappa^* + \bar\partial_\kappa^* \partial_\kappa) =0.
\end{align*}
Hence  $\square_\kappa = \overline\square_\kappa$ and  by a direct calculation,
\begin{align*}
\Delta_\kappa =\square_\kappa +\overline\square_\kappa=2\square_\kappa =2\overline\square_\kappa.
\end{align*}
\end{proof}
\begin{rem}  Recall that  if the transverse K\"ahler foliation is minimal, then a basic form of type $(r,0)$ is basic-harmonic if and only if it is basic holomorphic \cite{JR1}. But, if the foliation is not minimal, then the relation does not hold.
\end{rem}

\begin{defn} {\rm On a transverse K\"ahler foliation, a basic form $\phi$ is said to be}  $\bar\partial_\kappa$-holomorphic {\rm if $\bar\partial_\kappa\phi=0$.}
\end{defn} 

\begin{thm} On a  transverse K\"ahler foliation,  a $\bar\partial_\kappa$-holomorphic form of type $(r,0)$ is $\Delta_\kappa$-harmonic. In addition, if $M$ is compact, then the converse holds.
\end{thm}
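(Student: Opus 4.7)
The plan is to use Theorem 3.5, which gives $\Delta_\kappa = 2\overline{\square}_\kappa = 2(\bar\partial_\kappa \bar\partial_\kappa^* + \bar\partial_\kappa^* \bar\partial_\kappa)$, together with a type-degree observation: the operator $\bar\partial_\kappa^*$ has bidegree $(0,-1)$ because $\bar\partial_\kappa^* = \bar\partial_B^* - \tfrac12 H^{0,1}\lrcorner$ (Proposition 3.2), and both $\bar\partial_B^*$ and $H^{0,1}\lrcorner$ lower the antiholomorphic degree by one. Consequently, for any $\phi \in \Omega_B^{r,0}(\mathcal{F})$, the form $\bar\partial_\kappa^*\phi$ would have type $(r,-1)$, so it vanishes identically.

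For the forward implication, I would simply combine the two facts: if $\phi$ is of type $(r,0)$ and $\bar\partial_\kappa \phi = 0$, then both $\bar\partial_\kappa \phi = 0$ and $\bar\partial_\kappa^* \phi = 0$ hold (the latter for type reasons), so $\overline{\square}_\kappa \phi = 0$, and therefore $\Delta_\kappa \phi = 2\overline{\square}_\kappa \phi = 0$ by Theorem 3.5.

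For the converse, assume $M$ is compact and $\Delta_\kappa \phi = 0$ with $\phi$ of type $(r,0)$. Since $\bar\partial_\kappa^*$ is the formal adjoint of $\bar\partial_\kappa$ with respect to the global inner product, pairing with $\phi$ gives
\begin{equation*}
0 = \langle \Delta_\kappa \phi, \phi \rangle = 2\langle \overline{\square}_\kappa \phi, \phi\rangle = 2\|\bar\partial_\kappa \phi\|^2 + 2\|\bar\partial_\kappa^* \phi\|^2.
\end{equation*}
The second term vanishes automatically by the type argument above, so the identity forces $\|\bar\partial_\kappa \phi\|^2 = 0$, i.e.\ $\bar\partial_\kappa \phi = 0$.

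There is no real obstacle here; the proof is essentially a bookkeeping argument resting on Theorem 3.5 and the explicit formula for $\bar\partial_\kappa^*$ in Proposition 3.2. The only subtlety worth flagging is the need for compactness in the converse direction, which is required to justify the integration-by-parts identity used to split $\langle \Delta_\kappa \phi, \phi\rangle$ into a sum of two nonnegative squared norms.
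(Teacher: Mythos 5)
Your proof is correct and follows essentially the same route as the paper: the forward direction rests on $\bar\partial_\kappa^*\phi=0$ holding automatically for $(r,0)$-forms together with $\Delta_\kappa=2\overline\square_\kappa$ (Theorem 3.5), and the converse uses compactness to write $0=\langle\Delta_\kappa\phi,\phi\rangle$ as a sum of squared norms forcing $\bar\partial_\kappa\phi=0$. Your write-up just makes the type-degree reason for $\bar\partial_\kappa^*\phi=0$ explicit via Proposition 3.2, which the paper leaves implicit.
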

\begin{proof} Let $\phi$ be a $\bar\partial_\kappa$-holomorphic form of type $(r,0)$.   Since  $\bar\partial^*_\kappa\phi=0$ automatically, $\bar\partial_\kappa\phi$ implies $\Delta_\kappa\phi=2\overline\square_\kappa\phi=0$. Conversely, if $M$ is compact,  then $\Delta_\kappa\phi=0$ implies that $ \int_M |\bar\partial_\kappa\phi|^2=0$, i.e., $\phi$ is $\bar\partial_\kappa$-holomorphic. 
\end{proof}
Now, we consider  $\bar\partial_\kappa$-complex
\begin{align*}
\cdots \overset{\bar\partial_\kappa}\longrightarrow\Omega_B^{r,s-1}(\mathcal F)\overset{\bar\partial_\kappa}\longrightarrow\Omega_B^{r,s}(\mathcal F)\overset{\bar\partial_\kappa}\longrightarrow\Omega_B^{r,s+1}(\mathcal F)\overset{\bar\partial_\kappa}\longrightarrow\cdots.
\end{align*} 
Since $\bar\partial_\kappa^2=0$, the {\it twisted basic Dolbeault cohomology group}  is defined by
\begin{align*}
H_\kappa^{r,s}(\mathcal F)={{\ker\bar\partial_\kappa}\over {\rm Im}\: \bar\partial_\kappa}.
\end{align*}
Then we have the generalization of the Dolbeault decomposition.
\begin{thm} Let $(M,\mathcal F,g_Q,J)$ be a transverse K\"ahler foliation  on a compact manifold $M$ with a bundle-like metric. Then
\begin{align*}
\Omega_B^{r,s}(\mathcal F)= \mathcal H_\kappa^{r,s}(\mathcal F) \oplus {\rm Im}\,\bar\partial_\kappa \oplus {\rm Im}\,\bar\partial_\kappa^*,
\end{align*}
where $\mathcal H_\kappa^{r,s}(\mathcal F)=\ker\overline\square_\kappa$ is finite dimensional. 
\end{thm}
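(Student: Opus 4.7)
The plan is to bootstrap off the de Rham--Hodge decomposition for $\Delta_\kappa$ (Theorem 2.5) together with the K\"ahler identity $\Delta_\kappa = 2\overline\square_\kappa$ proved in Theorem 3.5. The key observation is that since $\overline\square_\kappa$ preserves bi-type $(r,s)$, so does $\Delta_\kappa$, and hence the $r$-th harmonic space splits as $\mathcal H_\kappa^r(\mathcal F) = \bigoplus_{p+q=r}\mathcal H_\kappa^{p,q}(\mathcal F)$. In particular, $\mathcal H_\kappa^{r,s}(\mathcal F) = \ker\overline\square_\kappa \cap \Omega_B^{r,s}(\mathcal F)$ is finite dimensional as a subspace of the finite-dimensional $\mathcal H_\kappa^{r+s}(\mathcal F)$ supplied by Theorem 2.5.

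The orthogonality of the three summands is routine. We have $\mathrm{Im}\,\bar\partial_\kappa \perp \mathrm{Im}\,\bar\partial_\kappa^*$ since $\bar\partial_\kappa^2=0$, while for $\phi \in \mathcal H_\kappa^{r,s}(\mathcal F)$ the identity $0 = \langle\overline\square_\kappa\phi,\phi\rangle = \|\bar\partial_\kappa\phi\|^2 + \|\bar\partial_\kappa^*\phi\|^2$, valid on the compact $M$, forces $\bar\partial_\kappa\phi = \bar\partial_\kappa^*\phi = 0$, so $\phi$ annihilates both image subspaces.

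For the decomposition itself, I would appeal to the Green's operator $G$ supplied by Theorem 2.5: every basic form satisfies $\phi = H\phi + \Delta_\kappa G\phi$, where $H$ is orthogonal projection onto $\mathcal H_\kappa^*(\mathcal F)$. Because $\Delta_\kappa$ preserves bi-type, uniqueness in Theorem 2.5 forces $G$ and $H$ to commute with the bi-type projectors $\Pi^{p,q}$. Combining with $\Delta_\kappa = 2\overline\square_\kappa$, for $\phi \in \Omega_B^{r,s}(\mathcal F)$ I obtain
\begin{align*}
\phi = H\phi + 2\bar\partial_\kappa\bigl(\bar\partial_\kappa^* G\phi\bigr) + 2\bar\partial_\kappa^*\bigl(\bar\partial_\kappa G\phi\bigr),
\end{align*}
with $H\phi \in \mathcal H_\kappa^{r,s}(\mathcal F)$, $\bar\partial_\kappa^* G\phi \in \Omega_B^{r,s-1}(\mathcal F)$, and $\bar\partial_\kappa G\phi \in \Omega_B^{r,s+1}(\mathcal F)$. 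Coupled with the orthogonality above, this delivers the desired three-term direct sum.

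The step I expect to be the main obstacle is the verification that $G$ commutes with the bi-type projectors. This does not follow formally from Theorem 2.5 alone, but rather from uniqueness: applying $\Pi^{p,q}$ to the identity $\phi = H\phi + \Delta_\kappa G\phi$ gives one Hodge decomposition of $\Pi^{p,q}\phi$, while $H\Pi^{p,q}\phi + \Delta_\kappa(\Pi^{p,q}G\phi)$ (using $[\Delta_\kappa,\Pi^{p,q}]=0$) gives another, and matching components yields $\Pi^{p,q}G = G\Pi^{p,q}$. Once this compatibility is in place, the rest of the argument is a direct transcription of the classical K\"ahler Hodge--Dolbeault decomposition to the basic setting.
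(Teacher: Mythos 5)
Your argument is correct, but it is a genuinely different route from the paper's. The paper proves Theorem 3.9 by simply re-running the transversally elliptic operator machinery for $\overline\square_\kappa$ acting on $\Omega_B^{r,s}(\mathcal F)$ (``the proof is similar to the one in Theorem 2.2, see [KT3]''), i.e.\ it treats $\overline\square_\kappa$ as a transversally elliptic, formally self-adjoint operator in its own right; that approach does not use $\Delta_\kappa=2\overline\square_\kappa$ at all and so would work even in the merely transverse Hermitian setting. You instead bootstrap from the already-established de Rham-level decomposition for $\Delta_\kappa$ (Theorem 2.5) together with the K\"ahler identity $\Delta_\kappa=2\overline\square_\kappa$ (Theorem 3.5) and the fact that $\overline\square_\kappa$, hence $\Delta_\kappa$, preserves bi-type; the commutation $[\Pi^{p,q},G]=[\Pi^{p,q},H]=0$ that you flag as the main obstacle is handled correctly by your uniqueness argument, provided $G\phi$ is normalized to be orthogonal to $\ker\Delta_\kappa$, and the orthogonality and finite-dimensionality steps are fine. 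What your approach buys is that no new elliptic analysis is needed, only formal identities already proved in the paper; what it costs is twofold: it genuinely requires the transverse K\"ahler hypothesis (the paper's route does not), and it invokes the Green's operator with $\phi=H\phi+\Delta_\kappa G\phi$, which is slightly more than the literal statement of Theorem 2.5 --- though it is a standard output of the basic Hodge theory of Habib--Richardson, El Kacimi and Kamber--Tondeur that underlies that theorem, so this is an acceptable appeal rather than a gap.
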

\begin{proof}
The proof is similar to the one in Theorem 2.2. See \cite{KT3} precisely. 
\end{proof}
As a Corollary of Theorem 3.9, we have the Dolbeault isomorphism.
\begin{coro} (Dolbeault isomorphism)  Let $(M,\mathcal F,g_Q,J)$ be as in Theorem 3.9. Then
\begin{equation*}
\mathcal H_\kappa^{r,s} (\mathcal F)\cong H_\kappa^{r,s}(\mathcal F).
\end{equation*}
\end{coro}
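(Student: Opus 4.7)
The plan is to follow the usual Hodge-theoretic recipe: build a natural candidate map from $\mathcal{H}_\kappa^{r,s}(\mathcal F)$ to $H_\kappa^{r,s}(\mathcal F)$ and verify bijectivity directly using the orthogonal decomposition of Theorem 3.9 together with the adjointness of $\bar\partial_\kappa$ and $\bar\partial_\kappa^*$ with respect to the global $L^2$ inner product.

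First I would define the map $\Phi:\mathcal{H}_\kappa^{r,s}(\mathcal F)\to H_\kappa^{r,s}(\mathcal F)$ by $\Phi(\phi)=[\phi]$. For this to make sense I must check that any $\overline\square_\kappa$-harmonic form is in particular $\bar\partial_\kappa$-closed. This follows from the standard identity $\langle\overline\square_\kappa\phi,\phi\rangle=\|\bar\partial_\kappa\phi\|^2+\|\bar\partial_\kappa^*\phi\|^2$, which holds on a compact manifold once $\bar\partial_\kappa$ and $\bar\partial_\kappa^*$ are adjoint (given by Proposition 3.2). Thus $\overline\square_\kappa\phi=0$ forces both $\bar\partial_\kappa\phi=0$ and $\bar\partial_\kappa^*\phi=0$, and in particular $\phi$ represents a cohomology class.

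Next I would verify injectivity of $\Phi$. Suppose $\phi\in\mathcal{H}_\kappa^{r,s}(\mathcal F)$ and $\phi=\bar\partial_\kappa\eta$ for some $\eta\in\Omega_B^{r,s-1}(\mathcal F)$. Then
\begin{equation*}
\|\phi\|^2=\langle\bar\partial_\kappa\eta,\phi\rangle=\langle\eta,\bar\partial_\kappa^*\phi\rangle=0,
\end{equation*}
since $\bar\partial_\kappa^*\phi=0$ by the previous paragraph. Hence $\phi=0$.

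For surjectivity, I would take an arbitrary $\bar\partial_\kappa$-closed form $\alpha\in\Omega_B^{r,s}(\mathcal F)$ and apply Theorem 3.9 to write $\alpha=h+\bar\partial_\kappa\beta+\bar\partial_\kappa^*\gamma$ with $h\in\mathcal{H}_\kappa^{r,s}(\mathcal F)$. Applying $\bar\partial_\kappa$ and using $\bar\partial_\kappa^2=0$ together with $\bar\partial_\kappa h=0$ yields $\bar\partial_\kappa\bar\partial_\kappa^*\gamma=0$; pairing with $\gamma$ gives $\|\bar\partial_\kappa^*\gamma\|^2=0$, so $\bar\partial_\kappa^*\gamma=0$ and therefore $\alpha=h+\bar\partial_\kappa\beta$. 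Hence $[\alpha]=[h]=\Phi(h)$, so $\Phi$ is surjective and the isomorphism follows. There is no substantive obstacle here since the compactness, finite dimensionality, and orthogonal decomposition are all supplied by Theorem 3.9; the only point that deserves care is the compatibility of $\bar\partial_\kappa^*$ with the global inner product, and that is precisely the content of Proposition 3.2.
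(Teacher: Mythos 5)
Your proposal is correct and is exactly the standard Hodge-isomorphism argument that the paper invokes (its proof of Corollary 3.10 simply says the argument is the same as for the Hodge isomorphism): harmonic implies $\bar\partial_\kappa$-closed via $\langle\overline\square_\kappa\phi,\phi\rangle=\|\bar\partial_\kappa\phi\|^2+\|\bar\partial_\kappa^*\phi\|^2$, injectivity by orthogonality of harmonic and exact forms, and surjectivity from the decomposition of Theorem 3.9. No gaps; this matches the paper's intended route.
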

\begin{proof} The proof is similar to the proof of the Hodge isomorphism.
\end{proof}
 Then we have the Kodaira-Serre duality.
\begin{thm} (Kodaira-Serre duality)  Let $(M,\mathcal F,g_Q,J)$ be as in Theorem 3.9. Then
\begin{align*}
H_\kappa^{r,s}(\mathcal F)\cong H_\kappa^{n-r,n-s}(\mathcal F).
\end{align*}
\end{thm}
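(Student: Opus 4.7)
The plan is to exhibit a conjugate-linear isomorphism between the spaces of $\overline\square_\kappa$-harmonic forms of bidegree $(r,s)$ and $(n-r,n-s)$, and then descend to cohomology via the Dolbeault isomorphism (Corollary 3.10). Thus the first step is to reduce the problem to showing $\mathcal H_\kappa^{r,s}(\mathcal F)\cong \mathcal H_\kappa^{n-r,n-s}(\mathcal F)$ as complex vector spaces.

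Define $\mathcal J\colon \Omega_B^{r,s}(\mathcal F)\to \Omega_B^{n-r,n-s}(\mathcal F)$ by $\mathcal J(\phi)=\bar *\bar\phi$. The type check $(r,s)\to(s,r)\to(n-r,n-s)$ under conjugation followed by $\bar *$ makes this well defined, and $\bar *^2=(-1)^{r+s}$ together with the involutivity of conjugation shows that $\mathcal J$ is a conjugate-linear bijection. The central computation is to verify that $\mathcal J$ commutes with $\overline\square_\kappa$. Three ingredients, all already established in the paper, suffice. First, Lemma 2.4(1) gives $[\Delta_\kappa,\bar *]=0$, and $\Delta_\kappa=2\overline\square_\kappa$ from Theorem 3.5 implies $[\overline\square_\kappa,\bar *]=0$. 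Second, since $\overline{\kappa_B^{0,1}}=\kappa_B^{1,0}$ and $\overline{H^{0,1}}=H^{1,0}$, the formulas (\ref{3-7}) and Proposition 3.2 show that complex conjugation interchanges $\bar\partial_\kappa\leftrightarrow\partial_\kappa$ and $\bar\partial_\kappa^*\leftrightarrow\partial_\kappa^*$, hence $\overline\square_\kappa\leftrightarrow\square_\kappa$. Third, Theorem 3.5 gives $\square_\kappa=\overline\square_\kappa$, so conjugation in fact preserves $\overline\square_\kappa$. Using also $\overline{\bar *\phi}=\bar *\bar\phi$ (noted in Section 3.1), one then computes
\begin{align*}
\overline\square_\kappa \mathcal J(\phi)=\overline\square_\kappa\bar *\bar\phi=\bar *\,\overline\square_\kappa\bar\phi=\bar *\,\overline{\overline\square_\kappa\phi}=\mathcal J(\overline\square_\kappa\phi),
\end{align*}
so $\mathcal J$ restricts to a conjugate-linear bijection $\mathcal H_\kappa^{r,s}(\mathcal F)\to \mathcal H_\kappa^{n-r,n-s}(\mathcal F)$.

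Since a conjugate-linear bijection of finite-dimensional complex vector spaces preserves complex dimension, one obtains a complex-linear isomorphism $\mathcal H_\kappa^{r,s}(\mathcal F)\cong \mathcal H_\kappa^{n-r,n-s}(\mathcal F)$, and Corollary 3.10 yields $H_\kappa^{r,s}(\mathcal F)\cong H_\kappa^{n-r,n-s}(\mathcal F)$. I expect the only delicate point to be the second ingredient above: verifying that the $\tfrac12\kappa_B^{1,0}\wedge$ correction in $\partial_\kappa$ (and the $\tfrac12 H^{0,1}\lrcorner$ correction in $\bar\partial_\kappa^*$) swap cleanly with their bar-counterparts under conjugation. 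This is a short but sign-sensitive check that relies on the explicit formulas (\ref{3-7}) and Proposition 3.2 together with the conjugation identities $\overline{\kappa_B^{0,1}}=\kappa_B^{1,0}$ and $\overline{H^{0,1}}=H^{1,0}$, and it must be done carefully to remain consistent with the equality $\Delta_\kappa=2\square_\kappa=2\overline\square_\kappa$ of Theorem 3.5.
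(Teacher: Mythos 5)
Your proposal is correct and follows essentially the same route as the paper: the same duality map $\phi\mapsto\bar*\bar\phi$, shown to commute with $\overline\square_\kappa$ so that it preserves the harmonic spaces, followed by the Dolbeault isomorphism (Corollary 3.10). The only difference is in the verification step: the paper derives the commutation from the explicit intertwining identities $\bar*\bar\partial_\kappa=(-1)^{r+s+1}\partial_\kappa^*\bar*$ and $\bar*\bar\partial_\kappa^*=(-1)^{r+s}\partial_\kappa\bar*$, whereas you obtain $[\overline\square_\kappa,\bar*]=0$ directly from $[\Delta_\kappa,\bar*]=0$ (Lemma 2.4) together with $\Delta_\kappa=2\square_\kappa=2\overline\square_\kappa$ (Theorem 3.5) and the conjugation swap $\overline\square_\kappa\leftrightarrow\square_\kappa$, which is an equally valid shortcut.
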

\begin{proof}
We define the operator $\sharp:\Omega_B^{r,s}(\mathcal F)\to \Omega_B^{n-r,n-s}(\mathcal F)$ by
\begin{align*}
\sharp\phi :=\bar *\bar\phi,
\end{align*}
which is an isomorphism.  
 Since  $\bar * (\kappa_B^{1,0}\wedge) \bar * = H^{0,1}\lrcorner$ \cite{JR1}, we have that  for $\phi\in\Omega_B^{r,s}(\mathcal F)$, 
\begin{equation}\label{3-15}
\bar *(\kappa_B^{0,1}\wedge \phi) = (-1)^{r+s} H^{1,0}\lrcorner \bar *\phi
\end{equation} 
 and from (\ref{starBarDeltaForm1}), we have
 \begin{equation}\label{3-16}
 \bar *\bar\partial_B\phi = (-1)^{r+s+1} \partial_T^* \bar *\phi.
 \end{equation}
 From (\ref{3-15}) and (\ref{3-16}), we get  that on $\Omega_B^{r,s}(\mathcal F)$,
 \begin{equation}\label{3-17}
 \bar *\bar\partial_\kappa = (-1)^{r+s+1}\partial_\kappa^* \bar *.
 \end{equation}
 Also, from (\ref{starBarDeltaForm2}),  we get that on $\Omega_B^{r,s}(\mathcal F)$, 
\begin{equation}\label{3-18}
 \bar *\bar\partial_\kappa^* = (-1)^{r+s}\partial_\kappa\bar *.
\end{equation}
From (\ref{3-17}) and (\ref{3-18}),  we have that on $\Omega_B^{r,s}(\mathcal F)$,
\begin{align*}
\bar *\bar\partial_\kappa \bar\partial_\kappa^*= -\partial_\kappa^*\partial_\kappa\bar *,\quad
\bar * \bar\partial_\kappa^* \bar\partial_\kappa= -\partial_\kappa \partial_\kappa^* \bar *,
\end{align*}
which implies
\begin{align*}
\bar *\overline\square_\kappa = \square_\kappa \bar *.
\end{align*}
Hence  for any basic form $\phi\in \Omega_B^{r,s}(\mathcal F)$,  we get
\begin{align*}
\sharp \overline\square_\kappa\phi = \bar * \square_\kappa \bar \phi= \overline\square_\kappa \bar *\bar\phi=\overline\square_\kappa \sharp\phi.
\end{align*}
That is, $\sharp$ preserves  $\mathcal H_\kappa^{r,s}(\mathcal F)=\ker\overline\square_\kappa$.  From the Dolbeault isomorphism (Corollary 3.10), the proof follows.
\end{proof}

\begin{rem} {\rm 
In general, the Kodaira-Serre duality  does not hold for $\bar\partial_B$-cohomology. In fact, 
\begin{align*}
 H_B^{r,s}(\mathcal F)\cong  H_T^{n-r,n-s}(\mathcal F),
\end{align*}
where  $H_T^{r,s}(\mathcal F)= { {\ker\bar\partial_T}\over {{\rm Im}\, \bar\partial_T}}$ is the $\bar\partial_T$-cohomology.  
$H_T^{r,s}(\mathcal F)$  is a type of
 Lichnerowicz basic cohomology. The interested reader may consult \cite[Section 3]{Ba} and \cite[Section 3, called ``adapted cohomology'' here]{Va} for information about ordinary Lichnerowicz cohomology and \cite{Had} for the basic case. 
Theorem 3.11 resolves the problem of the failure of Kodaira-Serre duality to hold for $\bar\partial_B$-cohomology.}
\end{rem}

From Theorem 3.5, we have the following Hodge decomposition for the twisted basic Dolbeault cohomology.
\begin{prop}  Let $(M,\mathcal F,g_Q,J)$ be as in Theorem 3.9. Then
\begin{equation}\label{3-19}
H_\kappa^l(\mathcal F) = \oplus_{r+s=l}H_\kappa^{r,s}(\mathcal F)
\end{equation}
for $0\leq l\leq 2n$  and
\begin{equation}\label{3-20}
{\rm dim}_{\mathbb C} H_\kappa^{r,s}(\mathcal F)  = {\rm dim}_{\mathbb C} H_\kappa^{s,r}(\mathcal F).
\end{equation}
\end{prop}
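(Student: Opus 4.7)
The plan is to deduce both statements from the already-established identity $\Delta_\kappa = 2\overline\square_\kappa = 2\square_\kappa$ of Theorem 3.5, combined with the isomorphisms $H_\kappa^l(\mathcal F)\cong \mathcal H_\kappa^l(\mathcal F)$ from Theorem 2.5 and $H_\kappa^{r,s}(\mathcal F)\cong \mathcal H_\kappa^{r,s}(\mathcal F)$ from Corollary 3.10. Concretely, for (\ref{3-19}) I would first note that $\bar\partial_\kappa:\Omega_B^{r,s}\to \Omega_B^{r,s+1}$ and $\bar\partial_\kappa^*:\Omega_B^{r,s}\to \Omega_B^{r,s-1}$, so $\overline\square_\kappa$ preserves bidegree. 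Given the decomposition $\Omega_B^l(\mathcal F)=\bigoplus_{r+s=l}\Omega_B^{r,s}(\mathcal F)$, a basic $l$-form $\phi=\sum_{r+s=l}\phi^{r,s}$ satisfies $\Delta_\kappa\phi=0$ iff $\overline\square_\kappa\phi=0$ iff $\overline\square_\kappa\phi^{r,s}=0$ for every component. This gives $\mathcal H_\kappa^l(\mathcal F)=\bigoplus_{r+s=l}\mathcal H_\kappa^{r,s}(\mathcal F)$, and then (\ref{3-19}) follows by passing to cohomology via the two harmonic isomorphisms.

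For (\ref{3-20}) I would use complex conjugation $c:\phi\mapsto \bar\phi$, which is a conjugate-linear isomorphism $\Omega_B^{r,s}(\mathcal F)\to \Omega_B^{s,r}(\mathcal F)$. The key point is that conjugation interchanges $\bar\partial_\kappa$ and $\partial_\kappa$. Indeed, $\overline{\bar\partial_B\phi}=\partial_B\bar\phi$ is standard, while $\overline{\kappa_B^{0,1}\wedge\phi}=\kappa_B^{1,0}\wedge\bar\phi$ because $\kappa_B^{0,1}=\overline{\kappa_B^{1,0}}$ by definition (see (\ref{3-1-1}) and the paragraph that follows). Consequently $\overline{\bar\partial_\kappa\phi}=\partial_\kappa\bar\phi$, and taking adjoints yields $\overline{\bar\partial_\kappa^*\phi}=\partial_\kappa^*\bar\phi$, so $\overline{\overline\square_\kappa\phi}=\square_\kappa\bar\phi$. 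Invoking Theorem 3.5, which identifies $\square_\kappa=\overline\square_\kappa$, gives $\overline{\overline\square_\kappa\phi}=\overline\square_\kappa\bar\phi$, so conjugation restricts to a conjugate-linear bijection $\mathcal H_\kappa^{r,s}(\mathcal F)\to \mathcal H_\kappa^{s,r}(\mathcal F)$, which preserves complex dimension, and Corollary 3.10 transfers the equality to $H_\kappa^{r,s}(\mathcal F)$.

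The main obstacle, if there is one, is the symmetry (\ref{3-20}): one must verify that conjugation really does intertwine the twisted operators, which is essentially a bookkeeping check on $\kappa_B^{1,0}$ vs $\kappa_B^{0,1}$; the step is short but relies critically on Theorem 3.5 to collapse $\square_\kappa$ and $\overline\square_\kappa$, since otherwise conjugation would only show $\dim\ker\overline\square_\kappa|_{\Omega_B^{r,s}}=\dim\ker\square_\kappa|_{\Omega_B^{s,r}}$. The bidegree decomposition (\ref{3-19}) is routine once the harmonic theory of Section 2 and Section 3 is in place.
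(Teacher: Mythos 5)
Your proposal is correct and follows essentially the same route as the paper: the paper also deduces (\ref{3-19}) from $\Delta_\kappa=2\overline\square_\kappa$ together with the fact that $\overline\square_\kappa$ preserves bidegree, passing to cohomology via the harmonic isomorphisms, and proves (\ref{3-20}) by noting that conjugation gives a conjugate-linear isomorphism $\mathcal H_\kappa^{r,s}(\mathcal F)\to\mathcal H_\kappa^{s,r}(\mathcal F)$. You merely spell out the intertwining of $\partial_\kappa$ and $\bar\partial_\kappa$ under conjugation and the role of $\square_\kappa=\overline\square_\kappa$, details the paper leaves implicit.
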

\begin{proof}  
Since $\Delta_\kappa = 2\overline\square_\kappa$ and $\overline\square_\kappa$ preserves the space  $\Omega_B^{r,s}(\mathcal F)$,   the proof of (\ref{3-19}) follows by Hodge isomorphism (Theorem 3.11).  The proof of  (\ref{3-20}) follows from that  the map $\mathcal H_\kappa^{r,s}(\mathcal F)\to  \mathcal H_\kappa^{s,r}(\mathcal F)$ is a conjugate linear isomorphism.  
\end{proof}

\begin{rem}   The Hodge decomposition  for $\bar\partial_B$-cohomology does not hold unless the mean curvature of $\mathcal F$ is automorphic \cite[Corollary 6.7]{JR2}.
\end{rem} 

Now let $\Omega_{B,P}^r(\mathcal F)$ be the set of all {\it primitive} basic $r$-forms $\phi$, that is, $\Lambda\phi=0$.  Then by $\frak{sl}_2(\mathbb C)$ representation theory, we have the following proposition.
\begin{prop}  \cite{JR2} Let $(M,\mathcal F,g_Q,J)$ be as in Theorem 3.9.  Then we have the following.

(1)  $\Omega_{B,P}^r(\mathcal F) = 0$ if $r>n$.

(2) If $\phi\in\Omega_{B,P}^r(\mathcal F)$, then $L^s\phi \ne 0$ for $0\leq s\leq n-r$ and $L^s\phi=0$ for $s>n-r$.

(3) The map $L^s :\Omega_B^r(\mathcal F)\to \Omega_B^{r+2s}(\mathcal F)$ is injective for $0\leq s\leq n-r$.

(4) The map $L^s : \Omega_B^r (\mathcal F)\to \Omega_B^{r+2s}(\mathcal F)$ is surjective for $s\geq n-r$.

(5) $\Omega_B^r (\mathcal F) = \oplus_{s\geq 0} L^s \Omega_{B,P}^{r-2s}(\mathcal F)$.

\end{prop}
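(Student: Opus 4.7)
My plan is to reduce the entire proposition to pointwise $\mathfrak{sl}_2(\mathbb C)$ representation theory on the normal exterior algebra $\Lambda^* Q_x^*$, following the classical K\"ahler proof almost verbatim. The only ingredients genuinely specific to the foliated setting are the definitions of $L$, $\Lambda$, and $\omega$, all of which are tensorial (i.e.\ $C^\infty_B(M)$-linear) on $\Lambda^* Q^*$, so any identity verified pointwise on $\Lambda^* Q_x^*$ propagates automatically to $\Omega_B^*(\mathcal F)$.

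The first step is to set up the $\mathfrak{sl}_2$-triple. Define $H$ on $\Omega_B^r(\mathcal F)$ by $H\phi=(r-n)\phi$, and use the already-introduced operators $L$ and $\Lambda$. The three relations to verify are $[H,L]=2L$, $[H,\Lambda]=-2\Lambda$, and $[L,\Lambda]=H$. The first two are immediate from the degree shifts $L:\Omega_B^r\to\Omega_B^{r+2}$ and $\Lambda:\Omega_B^r\to\Omega_B^{r-2}$. The key relation $[L,\Lambda]=H$ follows from the local expression $\omega=-\tfrac12\sum_{a=1}^{2n}\theta^a\wedge J\theta^a$, together with the interior/exterior commutation identities (\ref{3-8}), by a short induction on the form degree using a local orthonormal frame of $Q$.

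Second, since these identities hold pointwise, each fiber $\Lambda^* Q_x^*$ becomes a finite-dimensional complex representation of $\mathfrak{sl}_2(\mathbb C)$. By Weyl's complete reducibility, it decomposes as a direct sum of irreducibles. A primitive $r$-form is, by the definition $\Lambda\phi=0$ together with $H\phi=(r-n)\phi$, precisely a lowest-weight vector of weight $r-n$; the irreducible it generates has highest weight $n-r$ and is spanned by $\phi,L\phi,\dots,L^{n-r}\phi$. From the standard classification of finite-dimensional $\mathfrak{sl}_2$-modules one reads off all five claims at once: (1) holds because positive lowest weights cannot occur in a finite-dimensional representation; (2) reproduces the basic chain structure of an irreducible; (3) and (4) encode the injectivity of $L^s$ on weight spaces lying below the midpoint of each weight string and its surjectivity above it (translated via $H$ into the ranges $0\le s\le n-r$ and $s\ge n-r$); and (5) is the isotypic decomposition with the primitive parts playing the role of lowest-weight spaces.

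The main obstacle, though mild, is the pointwise verification of $[L,\Lambda]=H$ in this transverse setting, since the other commutation relations are degree bookkeeping. Once this identity is secured, everything else is formal $\mathfrak{sl}_2$-representation theory, and the passage from pointwise statements on $\Lambda^* Q_x^*$ to global ones on $\Omega_B^*(\mathcal F)$ is automatic from the tensorial nature of $L$ and $\Lambda$. Since the statement is attributed to \cite{JR2}, one could alternatively simply cite that reference; I have outlined the self-contained argument here for completeness.
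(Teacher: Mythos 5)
Your proposal is correct and follows essentially the same route as the paper, which gives no independent proof but attributes the proposition to \cite{JR2} with the remark that it follows from $\mathfrak{sl}_2(\mathbb C)$ representation theory --- precisely the fiberwise $(L,\Lambda,H)$ Lefschetz-triple argument you reconstruct, made global because $L$ and $\Lambda$ (and the projections onto primitive components, being polynomials in them) preserve basic forms. The only step requiring genuine computation, the commutation relation $[L,\Lambda]=H$ in the paper's sign conventions, is the standard pointwise verification via the local expression for $\omega$ and the identities (\ref{3-8}), so nothing essential is missing.
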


\begin{thm} (Hard Lefschetz theorem)   Let $(M,\mathcal F,g_Q,J)$ be a transverse K\"ahler foliation on a compact manifold $M$ with a bundle-like metric. Then the Hard Lefschetz theorem holds for twisted basic  cohomology. That is, the map
\begin{equation}\label{3-21}
L^s:H_\kappa^{r}(\mathcal F)\to H_\kappa^{r+2s}(\mathcal F)
\end{equation}
is injective for $0\leq s \leq n-r$ and surjective for $s\geq n-r$, $s\geq 0$. Moreover,
\begin{align}
H_\kappa^r(\mathcal F) &= \oplus_{s\geq 0} L^s H_{\kappa,P}^{r-2s}(\mathcal F),\label{3-22}\\
H_\kappa^{r,s}(\mathcal F)&= \oplus_{t\geq 0} L^t H_{\kappa,P}^{r-t,s-t}(\mathcal F),\label{3-23}
\end{align}
where $H_{\kappa,P}^r(\mathcal F) \cong \Omega_{B,P}^r(\mathcal F)\cap \ker\Delta_\kappa$ and $H_{\kappa,P}^{r,s}(\mathcal F) \cong \Omega_{B,P}^{r,s}(\mathcal F)\cap \ker\Delta_\kappa$.
\end{thm}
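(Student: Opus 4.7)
The plan is to transport the pointwise $\mathfrak{sl}_2(\mathbb C)$-decomposition provided by Proposition 3.15 to the finite-dimensional subspace of $\Delta_\kappa$-harmonic forms, and then invoke the Dolbeault isomorphism (Corollary 3.10) together with the $d_\kappa$-Hodge isomorphism to read off the statement on cohomology. Once $L$ and $\Lambda$ are known to preserve $\ker\Delta_\kappa$, the Hard Lefschetz theorem becomes a purely representation-theoretic consequence of the standard structure theorem for finite-dimensional $\mathfrak{sl}_2(\mathbb C)$-modules.

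The key algebraic step is to show that $L$ and $\Lambda$ both commute with $\Delta_\kappa$. For $L$: Proposition 3.4 gives $[L,d_\kappa]=0$, so the outer terms in
\[
[L,\Delta_\kappa]=[L,d_\kappa]\delta_\kappa+d_\kappa[L,\delta_\kappa]+[L,\delta_\kappa]d_\kappa+\delta_\kappa[L,d_\kappa]
\]
vanish, while $[L,\delta_\kappa]=[L,\partial_\kappa^*]+[L,\bar\partial_\kappa^*]=i(\partial_\kappa-\bar\partial_\kappa)$ by (\ref{3-10}). Using $\partial_\kappa^2=\bar\partial_\kappa^2=\partial_\kappa\bar\partial_\kappa+\bar\partial_\kappa\partial_\kappa=0$ (established inside the proof of Theorem 3.5), a one-line expansion shows $d_\kappa(\partial_\kappa-\bar\partial_\kappa)+(\partial_\kappa-\bar\partial_\kappa)d_\kappa=0$, giving $[L,\Delta_\kappa]=0$. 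The symmetric computation with $[\Lambda,\delta_\kappa]=0$ and $[\Lambda,d_\kappa]=i(\partial_\kappa^*-\bar\partial_\kappa^*)$ yields $[\Lambda,\Delta_\kappa]=0$.

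It follows that $L$, $\Lambda$, and hence $H=[L,\Lambda]$, all preserve the finite-dimensional graded space $\bigoplus_r \mathcal H_\kappa^r(\mathcal F)$ (finite-dimensional by Theorem 2.6). Setting $H_{\kappa,P}^r(\mathcal F):=\Omega_{B,P}^r(\mathcal F)\cap\ker\Delta_\kappa$, Proposition 3.15 applied to this finite-dimensional $\mathfrak{sl}_2(\mathbb C)$-subrepresentation immediately gives the injectivity of $L^s:\mathcal H_\kappa^r\to\mathcal H_\kappa^{r+2s}$ for $0\leq s\leq n-r$, its surjectivity for $s\geq n-r$, and the primitive decomposition $\mathcal H_\kappa^r=\bigoplus_{s\geq 0}L^s H_{\kappa,P}^{r-2s}$. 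The $d_\kappa$-Hodge isomorphism $\mathcal H_\kappa^r(\mathcal F)\cong H_\kappa^r(\mathcal F)$ then transports these statements to (\ref{3-21}) and (\ref{3-22}). For the bi-graded refinement (\ref{3-23}), note that $\omega$ has type $(1,1)$ so $L$ carries $\Omega_B^{r,s}$ into $\Omega_B^{r+1,s+1}$, and by Theorem 3.5 the identity $\Delta_\kappa=2\overline\square_\kappa$ implies that $\Delta_\kappa$ preserves the $(r,s)$-decomposition; the $\mathfrak{sl}_2(\mathbb C)$-action therefore restricts to each bi-degree diagonal and, setting $H_{\kappa,P}^{r,s}(\mathcal F):=\Omega_{B,P}^{r,s}(\mathcal F)\cap\ker\Delta_\kappa$, Corollary 3.10 converts the bi-graded primitive decomposition into (\ref{3-23}).

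The main obstacle is the verification $[L,\Delta_\kappa]=[\Lambda,\Delta_\kappa]=0$, since the $\kappa$-twist deforms the classical Kähler identities and these brackets are not individually zero in $(L,\delta_\kappa)$ or $(\Lambda,d_\kappa)$; the cancellation only happens at the level of $\Delta_\kappa$ and relies essentially on $\partial_\kappa^2=\bar\partial_\kappa^2=\partial_\kappa\bar\partial_\kappa+\bar\partial_\kappa\partial_\kappa=0$. Once this commutation is in place, the remainder of the argument is a formal application of $\mathfrak{sl}_2(\mathbb C)$-representation theory transported through the Hodge and Dolbeault isomorphisms.
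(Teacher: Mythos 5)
Your proposal is correct and follows essentially the same route as the paper: establish $[L,\Delta_\kappa]=0$ from the twisted K\"ahler identities of Proposition 3.4 together with $\partial_\kappa^2=\bar\partial_\kappa^2=\partial_\kappa\bar\partial_\kappa+\bar\partial_\kappa\partial_\kappa=0$, then apply the $\frak{sl}_2(\mathbb C)$ structure of Proposition 3.15 on $\ker\Delta_\kappa$ and transport through the Hodge isomorphism (Theorem 2.5, not 2.6 as you cite) and the Dolbeault isomorphism (Corollary 3.10) for the bigraded statement. The only cosmetic difference is that the paper gets $[L,\Delta_\kappa]=0$ via $[L,\overline\square_\kappa]=0$ and $\Delta_\kappa=2\overline\square_\kappa$, whereas you expand $[L,d_\kappa\delta_\kappa+\delta_\kappa d_\kappa]$ directly; both are the same one-line cancellation.
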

\begin{proof}  Since $\bar\partial_\kappa \partial_\kappa + \partial_\kappa\bar\partial_\kappa=0$,  $[L,\overline\square_\kappa]=[L,\square_\kappa]=0$  by Proposition 3.4, and so $[L,\Delta_\kappa]=0$.  Hence  by Proposition 3.15 and Hodge isomorphism (Theorem 2.5), the proofs of (\ref{3-21})  and (\ref{3-22}) follow.  The proof of (\ref{3-23}) follows from the Dolbeault isomorphism (Corollary 3.10).
\end{proof}
\begin{rem} Generally, Hard Lefschetz theorem for basic  cohomology does not hold unless $[\partial_B\kappa_B^{0,1}]$ is trivial. (cf. \cite[Theorem 5.11]{JR2}). 
\end{rem}
\begin{ex}  We consider the Carri\`{e}re example from 
\cite{Ca}. Also, see \cite[Section 7.1]{HR} and \cite[Example 9.1]{JR2}.  Let $A$ be a matrix in $\mathrm{SL}_{2}(\mathbb{Z})$ of trace
strictly greater than $2$. We denote respectively by $w_{1}$ and $w_{2}$
unit eigenvectors associated with the eigenvalues $\lambda $ and $\frac{1}{%
\lambda }$ of $A$ with $\lambda >1$ irrational. Let the hyperbolic torus $%
\mathbb{T}_{A}^{3}$ be the quotient of $\mathbb{T}^{2}\times \mathbb{R}$ by
the equivalence relation which identifies $(m,t)$ to $(A(m),t+1)$, where $\mathbb T^2 =\mathbb R^2/\mathbb Z^2$ is the torus. The flow
generated by the vector field $W_{2}$ is a Riemannian foliation with
bundle-like metric (letting $\left( x,s,t\right) $ denote local coordinates
in the $w_{2}$ direction, $w_{1}$ direction, and $\mathbb{R}$ direction,
respectively) 
\begin{equation*}
g=\lambda ^{-2t}dx^{2}+\lambda ^{2t}ds^{2}+dt^{2}.
\end{equation*}%
Since $A$ preserves the integral lattice $\mathbb Z^2$, it induces a diffeomorphism $A_0$ of the torus $\mathbb T^2$. So the flow generated by $W_2$ is invariant under the diffeomorphism $A_0$ of $\mathbb T^2$. 
Note that the mean curvature of the flow is $\kappa =\kappa _{B}=\log \left(
\lambda \right) dt$, since $\chi _{\mathcal{F}}=\lambda ^{-t}dx$ is the
characteristic form and $d\chi _{\mathcal{F}}=-\log \left( \lambda \right)
\lambda ^{-t}dt\wedge dx=-\kappa \wedge \chi _{\mathcal{F}}$. It is well known that  all twisted de Rham cohomology groups vanish, that is, $H_\kappa^r(\mathcal F)=\{0\}$ for all $r=0,1,2$ in \cite{HR}.  

Now we will show that all twisted basic Dolbeault cohomology grous satisfy Kodaira-Serre duality. First, we note that  an
orthonormal frame field for this manifold is $\{X=\lambda ^{t}\partial
_{x},S=\lambda ^{-t}\partial _{s},T=\partial _{t}\}$ corresponding to the
orthonormal coframe $\{X^{\ast }=\chi _{\mathcal{F}}=\lambda ^{-t}dx,S^{\ast
}=\lambda ^{t}ds,T^{\ast }=dt\}$. Then, letting $J$ be defined by $%
J(S)=T,J(T)=-S$, the Nijenhuis tensor 
\begin{equation*}
N_{J}(S,T)=[S,T]+J\left( [JS,T]+[S,JT]\right) -[JS,JT]
\end{equation*}%
clearly vanishes, so that $J$ is integrable. 
The corresponding transverse K\"{a}hler form is seen to be $\omega =T^{\ast
}\wedge S^{\ast }=\lambda ^{t}dt\wedge ds=d(\frac{1}{\log \lambda }S^{\ast
}) $, an exact form in basic cohomology. From the above, 
\begin{equation*}
\kappa _{B}=-i\left( \log \lambda \right) Z^{\ast }+i\left( \log \lambda
\right) \bar{Z}^{\ast },
\end{equation*}%
where $Z^{\ast }=\frac{1}{2}(S^{\ast }+iT^{\ast}) \in \Omega_B^{1,0}(\mathcal F)$. Then
\begin{eqnarray*}
\kappa _{B}^{1,0} &=&-i\log \left( \lambda \right) Z^{\ast }=-\frac{i}{2}%
\left( \log \lambda \right) \left( \lambda ^{t}ds+idt\right) \\
\bar{\partial}_{B}\kappa _{B}^{1,0}& =&d\kappa _{B}^{1,0}=
\left( \log \lambda \right) ^{2}\bar{Z}^{\ast }\wedge Z^{\ast }\\
\bar\partial_\kappa\kappa_B^{1,0} &=&\bar\partial_B\kappa_B^{1,0}-\frac12\kappa_B^{0,1}\wedge\kappa_B^{1,0}=\frac98\bar\partial_B \kappa_B^{1,0}.
\end{eqnarray*}%
It is impossible to change the metric so that $\bar\partial_B\kappa_B^{1,0}=0$. Hence $\mathcal F$ is nontaut \cite[Corollary 5.12]{JR2}.

The basic Dolbeault cohomolog groups  are given by $H_{B}^{0,0}=\mathbb R,\  H_{B}^{1,0}=\{0\}, \  H_{B}^{0,1}=\mathbb R$ and $H_{B}^{1,1}=\{0\}$.
Then observe that the ordinary basic cohomology Betti numbers for this
foliation are $h_{B}^{0}=h_{B}^{1}=1$, $h_{B}^{2}=0$, we see that the basic
Dolbeault Betti numbers satisfy 
\begin{equation*}
h_{B}^{0,0}=h_{B}^{0,1}=1,\quad h_{B}^{1,0}=h_{B}^{1,1}=0.
\end{equation*}%
So even though it is true that 
\begin{equation*}
h_{B}^{j}=\sum_{r+s=j}h_{B}^{r,s},
\end{equation*}%
and the foliation is transversely K\"{a}hler, we also have (with $n=1$) 
\begin{equation*}
h_{B}^{r,s}\neq h_{B}^{s,r},\quad h_{B}^{r,s}\neq h_{B}^{n-r,n-s}.
\end{equation*}%
Thus, for
a nontaut, transverse K\"{a}hler foliation, it is not necessarily true that
the odd basic Betti numbers are even, and the basic Dolbeault numbers do not
have the same kinds of symmetries as Dolbeault cohomology on K\"{a}hler
manifolds.

Now we compute the twisted basic Dolbeault cohomology groups $H_\kappa^{\ast ,\ast }\left( \mathcal{F}\right) $.  Let $f\in H_\kappa^{0,0}(\mathcal F)$,  that is, $f$ is a periodic function alone $t$ and   $\bar\partial_\kappa f =0$. Equivalently,
\begin{align}\label{3-34}
\bar\partial_B f = \frac12 f\kappa_B^{0,1}.
\end{align}
On the other hand,  since $\bar\partial_B f \in\Omega_B^{0,1}(\mathcal F)$,  by a direct calculation,  $\bar\partial_B f = i f'(t) \overline Z ^*$. Hence   from (\ref{3-34}),  $f'(t)\overline Z^*  =\frac12 f (\log \lambda) \overline Z^*$. That is, $f' =\frac12 (\log \lambda)f$. Then $f = c\lambda^{-t}$ for some constant. Since $f$ is periodic, $f(t)=0$.  Hence 
\begin{align*}
H_\kappa^{0,0}(\mathcal F) =\{0\}.
\end{align*}
Let $\varphi\in H_\kappa^{1,0}$. That is, $\varphi = f(t) Z^*\in\Omega_B^{1,0}(\mathcal F)$  and $\bar\partial_\kappa\varphi=0$ for a periodic function $f$. Hence
\begin{align}\label{3-35}
\bar\partial_B\varphi =\frac12\kappa_B^{0,1}\wedge\varphi.
\end{align}
By a direct calculation,  $\bar\partial_B\varphi = {i\over 2}(\log\lambda)f \overline Z^*\wedge Z^*$.  So from (\ref{3-35})
\begin{align*}
f' =-\frac32 (\log \lambda)f
\end{align*}
 and so $f(t) = c\lambda^{-{3\over2 }t}$ for some $c\in\mathbb R$. Since $f$ is periodic, it is zero. Thus, $\varphi =0$. That is,
 \begin{align*}
 H_\kappa^{1,0}(\mathcal F) =\{0\}.
 \end{align*} 
By complex conjugatation,  we get
\begin{align*}
H_\kappa^{0,1}(\mathcal F)=\{0\}.
\end{align*}
Let $\varphi\in H_\kappa^{1,1}(\mathcal F)$. Then $\varphi\in\Omega_B^{1,1}(\mathcal F)$ is of the form $\varphi = f(t) Z^*\wedge\overline Z^*$, where $f$ is a periodic function. Trivially, $\bar\partial_\kappa\varphi =0$. Now, let $\bar\partial_\kappa^*\varphi=0$. Since
\begin{align*}
\delta_\kappa\varphi &= \delta_T\varphi +\frac12 \kappa^\sharp\lrcorner\varphi\\
&= -{i\over 2}(f'-\frac34(\log\lambda)f)(Z^*+\overline Z^*),
\end{align*}
we have $\bar\partial_\kappa^*\varphi = -{i\over 2}(f'-\frac34(\log\lambda)f)Z^*$. Thus  the solution of $f'-\frac34(\log\lambda)f =0$ reduced to zero for periodic functions $f$. Hence
\begin{align*}
H_\kappa^{1,1}(\mathcal F) =\{0\}.
\end{align*}
This shows that the Kodaira-Serre duality and Hodge isomorphism are satisfied for the twisted basic Dolbeault cohomology.
\end{ex}

\subsection{The $d_\kappa d_\kappa^c$ Lemma}
Let $(M,\mathcal F,g_Q,J)$ be a transverse K\"ahler foliation  of codimension $2n$ on a compact Riemannian manifold $M$ 
with bundle-like metric.  First, we recall the operator $C:\Omega_B^*(\mathcal F)\to \Omega_B^*(\mathcal F)$ defined by \cite{JR2}
\begin{equation*}
C =\sum_{0\leq r,s\leq n} (\sqrt{-1})^{r-s} P_{r,s},
\end{equation*}
where $P_{r,s} :\Omega_B^*(\mathcal F)\to \Omega_B^{r,s}(\mathcal F)$ is the projection. Then
\begin{equation*}
C^* = C^{-1} = \sum_{0\leq r,s\leq n} (\sqrt{-1})^{s-r}P_{r,s}.
\end{equation*}
Now, we define $d_\kappa^c : \Omega_B^r(\mathcal F)\to \Omega_B^{r+1}(\mathcal F)$ by
\begin{equation*}
d_\kappa^c = C^* d_\kappa C = C^{-1}d_\kappa C.
\end{equation*}
Then $d_\kappa^c =\sqrt{-1} (\bar\partial_k -\partial_k)$ and 
\begin{equation*}
d_\kappa d_\kappa^c|_{\Omega_B^*(\mathcal F)} = -d_\kappa^c d_\kappa|_{\Omega_B^*(\mathcal F)}.
\end{equation*}
Let   $\delta_\kappa^c$  be  the adjoint operator of $d_\kappa^c$, which is given by
\begin{equation*}
\delta_\kappa^c = C^*\delta_\kappa C = C^{-1}\delta_\kappa C.
\end{equation*}
Let  $\Delta_\kappa^c = d_\kappa^c \delta_\kappa^c + \delta_\kappa^c d_\kappa^c$.
Since $\Delta_\kappa$ preserves the type of differential form, we have
\begin{equation*}
\Delta_\kappa^c = C^{-1}\Delta_\kappa C =\Delta_\kappa.
\end{equation*}
\begin{lemma}  ($d_\kappa d_\kappa^c$ Lemma)  Let $(M,\mathcal F,g_Q,J)$ be a transverse K\"ahler foliation on a compact manifold $M$ with a bundle-like metric.  Then on $\Omega_B^*(\mathcal F)$,
\begin{equation*}
\ker d_\kappa \cap {\rm Im}\; d_\kappa^c = {\rm Im}\; d_\kappa d_\kappa^c.
\end{equation*}
\end{lemma}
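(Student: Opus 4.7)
The easy inclusion $\mathrm{Im}\,d_\kappa d_\kappa^c\subseteq\ker d_\kappa\cap\mathrm{Im}\,d_\kappa^c$ is immediate from $d_\kappa^2=0$ (Lemma 2.4) and the identity $d_\kappa d_\kappa^c=-d_\kappa^c d_\kappa$ displayed just above the Lemma: any $\alpha=d_\kappa d_\kappa^c\gamma$ is $d_\kappa$-closed and equals $-d_\kappa^c(d_\kappa\gamma)$. For the reverse inclusion, fix $\alpha=d_\kappa^c\beta$ with $d_\kappa\alpha=0$.

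My main technical step is the twisted K\"ahler anticommutator
\[
d_\kappa^c\delta_\kappa+\delta_\kappa d_\kappa^c=0,
\]
which I prove by expanding both products into $\partial_\kappa,\bar\partial_\kappa,\partial_\kappa^*,\bar\partial_\kappa^*$. The mixed type terms cancel in pairs via the relations $\bar\partial_\kappa\partial_\kappa^*+\partial_\kappa^*\bar\partial_\kappa=0$ and $\partial_\kappa\bar\partial_\kappa^*+\bar\partial_\kappa^*\partial_\kappa=0$ established inside the proof of Theorem 3.5, while the pure type terms collapse to $i(\overline\square_\kappa-\square_\kappa)=0$ by the same theorem. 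This is where the transverse K\"ahler hypothesis really enters, and I expect it to be the hardest step.

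With this identity in hand I apply the Hodge decomposition for $\Delta_\kappa$ to $\beta$, writing $\beta=\beta_h+d_\kappa a+\delta_\kappa b$ with $\beta_h\in\mathcal H_\kappa$, and push $d_\kappa^c$ through. The harmonic piece contributes $0$ because $\Delta_\kappa^c=\Delta_\kappa$ (noted just above the Lemma) forces $\beta_h$ to be annihilated by $d_\kappa^c$ and $\delta_\kappa^c$. The middle piece yields $-d_\kappa d_\kappa^c a$ via $d_\kappa^c d_\kappa=-d_\kappa d_\kappa^c$, and the last piece yields $-\delta_\kappa d_\kappa^c b$ via the anticommutator above, so
\[
\alpha=-d_\kappa d_\kappa^c a-\delta_\kappa d_\kappa^c b.
\]

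To finish, observe that $\alpha$ is simultaneously $d_\kappa$-closed and orthogonal to $\mathcal H_\kappa$ (the latter because $\alpha\in\mathrm{Im}\,d_\kappa^c$ and the $d_\kappa^c$-Hodge decomposition is orthogonal), so the $d_\kappa$-Hodge decomposition of Theorem 2.5 places $\alpha$ in $\mathrm{Im}\,d_\kappa$. Since $-d_\kappa d_\kappa^c a\in\mathrm{Im}\,d_\kappa$ too, the remainder $-\delta_\kappa d_\kappa^c b$ must also lie in $\mathrm{Im}\,d_\kappa$; but it is patently in $\mathrm{Im}\,\delta_\kappa$, and the orthogonality of the Hodge summands forces $\delta_\kappa d_\kappa^c b=0$. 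Hence $\alpha=-d_\kappa d_\kappa^c a\in\mathrm{Im}\,d_\kappa d_\kappa^c$. Everything after the anticommutator identity is routine bookkeeping with the orthogonal splittings already set up in the paper.
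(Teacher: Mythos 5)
Your proposal is correct and follows essentially the same route as the paper: both decompose $\beta$ with $\alpha=d_\kappa^c\beta$ via the $\Delta_\kappa$-Hodge decomposition, kill the harmonic piece using $\Delta_\kappa=2\square_\kappa=2\overline\square_\kappa$ (equivalently $\Delta_\kappa^c=\Delta_\kappa$), and rely on the same key anticommutator $d_\kappa^c\delta_\kappa+\delta_\kappa d_\kappa^c=0$ coming from Theorem 3.5. The only cosmetic difference is at the end, where the paper kills the term $\delta_\kappa d_\kappa^c\gamma_2$ by an explicit integration by parts while you invoke $\ker d_\kappa=\mathcal H_\kappa\oplus{\rm Im}\,d_\kappa$ and the orthogonality ${\rm Im}\,d_\kappa\cap{\rm Im}\,\delta_\kappa=\{0\}$, which amounts to the same computation.
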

\begin{proof}  Let $\alpha\in \ker d_\kappa \cap {\rm Im}\; d_\kappa^c\cap \Omega_B^r(\mathcal F)$.  That is, for some basic $(r-1)-$form $\beta$,  $\alpha = d_\kappa^c\beta$ and $\beta=\gamma + d_\kappa \gamma_1 + \delta_\kappa \gamma_2$ with $\gamma \in \mathcal H_\kappa^{r-1}(\mathcal F)$ by the Hodge decomposition (Theorem 2.5).  Since $M$ is compact, by Theorem 3.5, $\Delta_\kappa \gamma =0$ implies $\square_\kappa\gamma =\overline\square_\kappa\gamma=0$. Therefore, $\partial_\kappa\gamma =\bar\partial_\kappa\gamma=0$ and so $d_\kappa^c\gamma=0$.  Hence
\begin{align}
\alpha &=d_\kappa^c\beta \notag\\&=d_\kappa^c \gamma + d_\kappa^c d_\kappa\gamma_1 + d_\kappa^c\delta_\kappa\gamma_2\notag\\
&= d_\kappa^c d_\kappa\gamma_1 + d_\kappa^c\delta_\kappa\gamma_2\notag\\
&=d_\kappa d_\kappa^c (-\gamma_1) + d_\kappa^c\delta_\kappa\gamma_2.\label{3-24}
\end{align}
Moreover, since $d_\kappa\alpha=0$, by the equation above, 
\begin{align*}
0=d_\kappa d_\kappa^c\beta = d_\kappa d_\kappa^c\delta_\kappa\gamma_2 = -d_\kappa\delta_\kappa d_\kappa^c\gamma_2.
\end{align*}
The last equality follows from  $d_\kappa^c\delta_\kappa +\delta_\kappa d_\kappa^c =0$ (Theorem 3.5).  By integrating,
\begin{align*}
0=\int_M \langle d_\kappa\delta_\kappa d_\kappa^c\gamma_2,d_\kappa^c\gamma_2\rangle = \int_M \Vert \delta_\kappa d_\kappa^c\gamma_2\Vert^2 =\int_M \Vert d_\kappa^c \delta_\kappa\gamma_2\Vert^2.
\end{align*}
That is, $d_\kappa^c\delta_\kappa\gamma_2=0$.  So from (\ref{3-24}), 
\begin{align*}
\alpha = d_\kappa d_\kappa^c(-\gamma_1),
\end{align*}
which implies that $\alpha \in {\rm Im}\; d_\kappa d_\kappa^c$.   
\end{proof}
\begin{rem}  If $\mathcal F$ is taut, then the  $d_\kappa d_\kappa^c$ Lemma implies that $d d_c$ Lemma \cite[Lemma 7.3]{JR}. 
\end{rem}
\subsection{$\Delta_\kappa$-harmonic forms}
Let $(M,\mathcal F,g_Q,J)$  be a transverse K\"ahler foliation  on a compact Riemannian manifold $M$
with a bundle-like metric.  We define two operators
\begin{align*}
\nabla_T^*\nabla_T\phi&=-\sum_a \nabla_{V_a}\nabla_{\bar V_a}\phi + \nabla_{H^{0,1}}\phi,\\
\bar\nabla_T^*\bar\nabla_T\phi&=-\sum_a \nabla_{\bar V_a}\nabla_{V_a}\phi + \nabla_{H^{1,0}}\phi.
\end{align*}
Then by a direct calculation, we have
\begin{align}
\nabla_T^*\nabla_T\phi=\bar\nabla_T^*\bar\nabla_T\phi +\nabla_{H^{0,1}-H^{1,0}}\phi -\sum_a R^Q( V_a,\bar V_a)\phi
\label{DeltaTSquareForm}
\end{align}
for any basic form $\phi$.   Then the operators $ \nabla_T^*\nabla_T$ and $\bar\nabla_T^*\bar\nabla_T$ are formally self-adjoint  and positive-definite \cite{JR1}.

\begin{prop} \cite{JR1}
Let $(M,\mathcal F,g_Q,J)$  be a transverse K\"ahler foliation
on a  Riemannian manifold $M$
with a bundle-like metric.   Then  for all $\phi\in\Omega_B^{r,s}(\mathcal{F})$,
\begin{align}
\overline\square_B\phi&= \nabla_T^*\nabla_T\phi + \sum_{a,b}\bar\omega^a\wedge \bar V_b\lrcorner  R^Q(V_b,\bar V_a)\phi+\sum_a \bar\omega^a\wedge (\nabla_{\bar V_a}H^{0,1})\lrcorner\,\phi,  \label{boxBar}
\end{align} 
\end{prop}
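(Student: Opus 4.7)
The plan is to derive the identity by a direct Bochner--Kodaira computation in a suitably chosen frame, then collect the four kinds of terms (pure second derivatives, curvature, covariant derivative of $H^{0,1}$, and a leftover interior derivative by $H^{0,1}$) into the three summands appearing on the right.

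First, I would fix a point $x\in M$ and choose a unitary normal frame $\{V_a\}$ of $Q^{1,0}$ at $x$, so that $\nabla_W V_a|_x=\nabla_W\overline V_a|_x=0$ for every $W\in Q_x^{\mathbb C}$; then the dual coframe $\{\omega^a,\bar\omega^a\}$ is also $\nabla$-parallel at $x$ and $[V_a,\bar V_b]|_x=0$. From (\ref{3-5}) and (\ref{3-6}),
\begin{equation*}
\bar\partial_B^*\phi=-\sum_a\bar V_a\lrcorner\nabla_{V_a}\phi+H^{0,1}\lrcorner\phi,
\end{equation*}
and the torsion-free property of $\nabla$ gives $\bar\partial_B\phi=\sum_a\bar\omega^a\wedge\nabla_{\bar V_a}\phi$ at $x$.

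Next, I would expand the two halves of $\overline\square_B=\bar\partial_B\bar\partial_B^*+\bar\partial_B^*\bar\partial_B$ at $x$, repeatedly using the graded commutation $\bar V_a\lrcorner(\bar\omega^b\wedge\psi)=\delta_{ab}\psi-\bar\omega^b\wedge\bar V_a\lrcorner\psi$ and the Leibniz rule $\nabla_{\bar V_b}(H^{0,1}\lrcorner\phi)=(\nabla_{\bar V_b}H^{0,1})\lrcorner\phi+H^{0,1}\lrcorner\nabla_{\bar V_b}\phi$. This produces
\begin{align*}
\bar\partial_B^*\bar\partial_B\phi&=-\sum_a\nabla_{V_a}\nabla_{\bar V_a}\phi+\sum_{a,b}\bar\omega^b\wedge\bar V_a\lrcorner\nabla_{V_a}\nabla_{\bar V_b}\phi+H^{0,1}\lrcorner\bar\partial_B\phi,\\
\bar\partial_B\bar\partial_B^*\phi&=-\sum_{a,b}\bar\omega^b\wedge\bar V_a\lrcorner\nabla_{\bar V_b}\nabla_{V_a}\phi+\sum_b\bar\omega^b\wedge(\nabla_{\bar V_b}H^{0,1})\lrcorner\phi+\sum_b\bar\omega^b\wedge H^{0,1}\lrcorner\nabla_{\bar V_b}\phi.
\end{align*}
Adding them, the mixed second-order terms combine into $\sum_{a,b}\bar\omega^b\wedge\bar V_a\lrcorner[\nabla_{V_a},\nabla_{\bar V_b}]\phi$, which at $x$ equals $\sum_{a,b}\bar\omega^b\wedge\bar V_a\lrcorner R^Q(V_a,\bar V_b)\phi$ since $[V_a,\bar V_b]|_x=0$; after relabeling $a\leftrightarrow b$ this is exactly the curvature term in the statement.

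The final step is to treat the two leftover $H^{0,1}$ contributions jointly. Expanding $H^{0,1}=\sum_b\bar\omega^b(H^{0,1})\bar V_b$ and using $H^{0,1}\lrcorner(\bar\omega^b\wedge\psi)=\bar\omega^b(H^{0,1})\psi-\bar\omega^b\wedge H^{0,1}\lrcorner\psi$, one verifies that $H^{0,1}\lrcorner\bar\partial_B\phi+\sum_b\bar\omega^b\wedge H^{0,1}\lrcorner\nabla_{\bar V_b}\phi=\nabla_{H^{0,1}}\phi$. Combining with the diagonal second-derivative part yields $-\sum_a\nabla_{V_a}\nabla_{\bar V_a}\phi+\nabla_{H^{0,1}}\phi=\nabla_T^*\nabla_T\phi$ by definition, producing the stated formula; since $x$ was arbitrary and both sides are tensorial, the identity holds globally. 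The main obstacle is the bookkeeping for the $H^{0,1}$-terms: they arise from two unrelated mechanisms, yet must recombine precisely into $\nabla_{H^{0,1}}\phi$ plus $\sum_a\bar\omega^a\wedge(\nabla_{\bar V_a}H^{0,1})\lrcorner\phi$ with no residual cross terms; verifying this cancellation using the fact that $H^{0,1}\in Q^{0,1}$ is the delicate piece of the argument.
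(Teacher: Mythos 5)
Your computation is correct: expanding $\overline\square_B=\bar\partial_B\bar\partial_B^*+\bar\partial_B^*\bar\partial_B$ in a unitary frame parallel at a point, using $\bar\partial_B=\sum_a\bar\omega^a\wedge\nabla_{\bar V_a}$ (valid since $\nabla$ is torsion-free and preserves types by $\nabla J=0$) and $\bar\partial_B^*=-\sum_a\bar V_a\lrcorner\nabla_{V_a}+H^{0,1}\lrcorner$, and recombining the two $H^{0,1}$-contributions into $\nabla_{H^{0,1}}\phi$ plus the $\sum_a\bar\omega^a\wedge(\nabla_{\bar V_a}H^{0,1})\lrcorner\phi$ term, is exactly the standard Weitzenb\"ock-type derivation. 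The paper itself gives no proof but cites \cite{JR1}, where the formula is obtained by essentially this same frame computation, so your argument matches the intended proof.
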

From (\ref{DeltaTSquareForm}), we have the following.
\begin{prop}  \cite{JR1} Let  $(M,\mathcal F,g_Q,J)$  be as in Proposition 3.21. 

$(1)$ If $\phi$ is a basic form of type $(r,0)$, then
\begin{align}
\overline\square_B\phi &=\nabla_T^*\nabla_T\phi.\label{boxBarT}\\
&=\bar\nabla_T^*\bar\nabla_T\phi +\nabla_{H^{0,1}-H^{1,0}}\phi -\sum_a R^Q( V_a,\bar V_a)\phi.
\end{align}

$(2)$ If $\phi$ is a basic form of type $(r,n)$, then
\begin{align}
\overline\square_B\phi&=\nabla_T^*\nabla_T\phi +\sum_a R^Q(V_a,\bar V_a)\phi
 +\operatorname{div}_\nabla (H^{0,1}) \phi \label{boxBarrn1}   \\
 &= \bar\nabla_T^*\bar\nabla_T \phi + \nabla_{H^{0,1}-H^{1,0}}\phi +\operatorname{div}_\nabla (H^{0,1}) \phi.
  \label{boxBarrn2}  
\end{align}

\end{prop}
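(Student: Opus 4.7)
My plan is to start from the Weitzenböck-type identity already established in Proposition 3.21, namely
\[
\overline\square_B\phi = \nabla_T^*\nabla_T\phi + \sum_{a,b}\bar\omega^a\wedge \bar V_b\lrcorner  R^Q(V_b,\bar V_a)\phi+\sum_a \bar\omega^a\wedge (\nabla_{\bar V_a}H^{0,1})\lrcorner\,\phi,
\]
and simplify the two remainder terms in each of the two type-extreme cases. Throughout I will use two facts freely: that on a transverse Kähler foliation $\nabla J=0$, so $\nabla_X$ preserves the $(r,s)$-bidegree and, consequently, $R^Q(V_b,\bar V_a)\phi$ has the same type as $\phi$; and that $H^{0,1}$ is of type $(0,1)$, so $\nabla_{\bar V_a}H^{0,1}$ remains of type $(0,1)$. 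The key combinatorial identity I will use is $\{\bar\omega^a\wedge,\,\bar V_b\lrcorner\} = \bar\omega^a(\bar V_b) = \delta_{ab}$ on basic forms.

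For part (1), when $\phi\in\Omega_B^{r,0}(\mathcal F)$, every $(0,1)$-interior product annihilates $\phi$ and hence, by type preservation, also annihilates $R^Q(V_b,\bar V_a)\phi$. Thus $\bar V_b\lrcorner R^Q(V_b,\bar V_a)\phi=0$ and $(\nabla_{\bar V_a}H^{0,1})\lrcorner\phi = 0$, so both remainder terms vanish and I obtain $\overline\square_B\phi = \nabla_T^*\nabla_T\phi$. The second equation in (1) is then an immediate substitution of the identity (\ref{DeltaTSquareForm}) already recorded in the excerpt.

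For part (2), when $\phi\in\Omega_B^{r,n}(\mathcal F)$ the $(0,1)$-degree is maximal, so $\bar\omega^a\wedge\phi=0$, and the anticommutation identity gives $\bar\omega^a\wedge\bar V_b\lrcorner\psi = \delta_{ab}\psi$ for any $\psi\in\Omega_B^{r,n}(\mathcal F)$. Applying this with $\psi=R^Q(V_b,\bar V_a)\phi$ collapses the curvature double sum to $\sum_a R^Q(V_a,\bar V_a)\phi$. For the $H^{0,1}$-term, I expand $\nabla_{\bar V_a}H^{0,1}=\sum_b g_{ab}\bar V_b$ (only $\bar V_b$-components appear by the type argument); the same collapse gives $\sum_a \bar\omega^a\wedge(\nabla_{\bar V_a}H^{0,1})\lrcorner\phi = \big(\sum_a g_{aa}\big)\phi$. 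The coefficient $\sum_a g_{aa} = \sum_a \bar\omega^a(\nabla_{\bar V_a}H^{0,1})$ is exactly $\operatorname{div}_\nabla(H^{0,1})$, since the complementary trace contributions $\omega^a(\nabla_{V_a}H^{0,1})$ vanish by type. This proves the first equation of (2), and the second equation again follows by substituting (\ref{DeltaTSquareForm}).

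The only delicate point is the identification of $\sum_a \bar\omega^a(\nabla_{\bar V_a}H^{0,1})$ with $\operatorname{div}_\nabla H^{0,1}$; I expect that to be the main (though not severe) obstacle, requiring a brief check that in the complex frame $\{V_a,\bar V_a\}$ the trace of $\nabla H^{0,1}$ reduces to precisely that sum because $\nabla_{V_a}H^{0,1}$ is of type $(0,1)$ and therefore has zero $\omega^a$-component. Once this is verified, parts (1) and (2) follow as direct corollaries of Proposition 3.21 together with (\ref{DeltaTSquareForm}).
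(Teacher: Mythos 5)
Your argument is correct and follows exactly the route the paper intends (the paper itself only says ``From (\ref{DeltaTSquareForm}), we have the following'' and defers details to \cite{JR1}): specialize the Weitzenb\"ock formula of Proposition 3.21 to bidegrees $(r,0)$ and $(r,n)$ using type preservation ($\nabla J=0$) and the anticommutation $\bar\omega^a\wedge\bar V_b\lrcorner+\bar V_b\lrcorner\,\bar\omega^a\wedge=\delta_{ab}$, then substitute (\ref{DeltaTSquareForm}). Your handling of the delicate point is also right: since $\nabla_{V_a}H^{0,1}$ stays of type $(0,1)$, the trace $\sum_a\bar\omega^a(\nabla_{\bar V_a}H^{0,1})$ is indeed the full transverse divergence $\operatorname{div}_\nabla(H^{0,1})$.
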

On the other hand, by a direct calculation, we have the following.

\begin{prop}  On a transverse  K\"ahler foliation, we have
\begin{align*}
\overline\square_\kappa &=\overline\square_B -\frac12\Big( \epsilon(\kappa_B^{0,1})\bar\partial_B^* + \bar\partial_B^* \epsilon(\kappa_B^{0,1})\Big)
-\frac12\Big(\bar\partial_B H^{0,1}\lrcorner + H^{0,1}\lrcorner\bar\partial_B\Big) +\frac12|\kappa_B^{0,1}|^2.
\end{align*}

\end{prop}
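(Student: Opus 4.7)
The plan is a direct expansion. By (\ref{3-7}) and Proposition 3.2 we have the perturbation formulas $\bar\partial_\kappa = \bar\partial_B - \tfrac12\,\kappa_B^{0,1}\wedge$ and $\bar\partial_\kappa^{*} = \bar\partial_B^{*} - \tfrac12\,H^{0,1}\lrcorner$. Substituting these into $\overline\square_\kappa = \bar\partial_\kappa\bar\partial_\kappa^{*} + \bar\partial_\kappa^{*}\bar\partial_\kappa$ and multiplying out produces exactly four kinds of contributions: the unperturbed Laplacian $\overline\square_B$; two sets of first-order cross-terms, namely $-\tfrac12\bigl(\epsilon(\kappa_B^{0,1})\bar\partial_B^{*} + \bar\partial_B^{*}\epsilon(\kappa_B^{0,1})\bigr)$ and $-\tfrac12\bigl(\bar\partial_B\,H^{0,1}\lrcorner + H^{0,1}\lrcorner\,\bar\partial_B\bigr)$; and the zero-order anticommutator $\tfrac14\bigl(\epsilon(\kappa_B^{0,1})\,H^{0,1}\lrcorner + H^{0,1}\lrcorner\,\epsilon(\kappa_B^{0,1})\bigr)$. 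The first three groups already have the shape asserted in the proposition, so the whole task reduces to identifying this final anticommutator.

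For that, I would invoke the Koszul/Clifford relation $\epsilon(\alpha)\,(X\lrcorner) + (X\lrcorner)\,\epsilon(\alpha) = \alpha(X)\cdot\operatorname{id}$ with $\alpha = \kappa_B^{0,1}$ and $X = H^{0,1}$. A short calculation starting from $\kappa_B^{0,1} = \tfrac12(\kappa_B - iJ\kappa_B)$ and $H^{0,1} = \tfrac12(\kappa_B^{\sharp} + iJ\kappa_B^{\sharp})$, and using $J$-invariance of $g_Q$ in the form $g_Q(\kappa_B^{\sharp}, J\kappa_B^{\sharp}) = 0$ and $|J\kappa_B|^2 = |\kappa_B|^2$, evaluates $\kappa_B^{0,1}(H^{0,1})$ as the pointwise squared norm of $\kappa_B^{0,1}$ in the convention fixed by the hermitian inner product on $\Omega_B^{0,1}(\mathcal F)$. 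Combined with the overall factor $\tfrac14$, this produces the stated zero-order term $\tfrac12\,|\kappa_B^{0,1}|^2$, completing the identity.

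There is no substantive obstacle; the argument is short bookkeeping entirely analogous to the derivation of (\ref{2-8}) for $\Delta_\kappa$ itself. The only points requiring care are (i) the formal adjoint identity $(\kappa_B^{0,1}\wedge)^{*} = H^{0,1}\lrcorner$ supplied by Proposition 3.2, which is precisely what makes the cross-terms group symmetrically, and (ii) tracking the scalar normalization in step two so that the Clifford contribution is correctly matched against the stated coefficient of $|\kappa_B^{0,1}|^2$.
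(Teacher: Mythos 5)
Your overall route---substituting $\bar\partial_\kappa=\bar\partial_B-\tfrac12\,\epsilon(\kappa_B^{0,1})$ and $\bar\partial_\kappa^{*}=\bar\partial_B^{*}-\tfrac12\,H^{0,1}\lrcorner$ from Proposition 3.2 into $\overline\square_\kappa=\bar\partial_\kappa\bar\partial_\kappa^{*}+\bar\partial_\kappa^{*}\bar\partial_\kappa$ and expanding---is exactly the ``direct calculation'' the paper intends, and your grouping of the two first-order cross-terms is correct. The problem is in the one step you yourself flag as delicate, the zero-order term, where your argument is internally inconsistent. The expansion leaves $\tfrac14\bigl(\epsilon(\kappa_B^{0,1})\,H^{0,1}\lrcorner+H^{0,1}\lrcorner\,\epsilon(\kappa_B^{0,1})\bigr)=\tfrac14\,\kappa_B^{0,1}(H^{0,1})$, and with $\kappa_B^{0,1}=\tfrac12(\kappa_B-iJ\kappa_B)$, $H^{0,1}=\tfrac12(\kappa_B^{\sharp}+iJ\kappa_B^{\sharp})$ one computes $\kappa_B^{0,1}(H^{0,1})=\tfrac12|\kappa_B|^2$, which is precisely the Hermitian norm $|\kappa_B^{0,1}|^2$ determined by the paper's pairing $\langle\phi,\psi\rangle\nu=\phi\wedge\bar*\bar\psi$ (and is the value used in the proof of Proposition 3.29, where $H^{0,1}\lrcorner(\kappa_B^{0,1}\wedge\phi)=|\kappa_B^{0,1}|^2\phi$ on $(r,0)$-forms). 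But then $\tfrac14\,\kappa_B^{0,1}(H^{0,1})=\tfrac14|\kappa_B^{0,1}|^2$, whereas you assert this ``produces the stated zero-order term $\tfrac12|\kappa_B^{0,1}|^2$'': having identified $\kappa_B^{0,1}(H^{0,1})$ with $|\kappa_B^{0,1}|^2$, multiplying by $\tfrac14$ cannot yield a coefficient $\tfrac12$.

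So either you have a factor-of-two slip at this point, or you are implicitly invoking a normalization $|\kappa_B^{0,1}|^2=\tfrac14|\kappa_B|^2$ that you neither state nor verify and that conflicts with the way the symbol is used elsewhere in the paper. A complete argument must actually evaluate the scalar, $\kappa_B^{0,1}(H^{0,1})=\tfrac12|\kappa_B|^2$, fix the convention for $|\kappa_B^{0,1}|^2$, and then state which coefficient the expansion really produces (with the Hermitian convention it is $\tfrac14|\kappa_B^{0,1}|^2=\tfrac18|\kappa_B|^2$, so the match with the displayed coefficient $\tfrac12|\kappa_B^{0,1}|^2$ needs to be addressed rather than asserted). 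As written, the only nontrivial step of your proof does not close.
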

From Proposition 3.21 and Proposition 3.23, we have the following.
\begin{prop} On a transverse K\"ahler foliation, we have
\begin{align*}
\overline\square_\kappa\phi&= \nabla_T^*\nabla_T\phi+\sum_{a,b}\overline\omega^a\wedge\overline V_b\lrcorner R^Q(V_b,\overline V_a)\phi + \sum_a \overline\omega^a\wedge (\nabla_{\overline V_a} H^{0,1})\lrcorner\phi \\
&-\frac12\Big( \epsilon(\kappa_B^{0,1})\bar\partial_B^* + \bar\partial_B^* \epsilon(\kappa_B^{0,1})\Big)\phi
-\frac12\Big(\bar\partial_B H^{0,1}\lrcorner + H^{0,1}\lrcorner\bar\partial_B\Big)\phi +\frac12|\kappa_B^{0,1}|^2\phi.
\end{align*}
\end{prop}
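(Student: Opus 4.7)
The identity to be proved is the direct concatenation of Proposition~3.21 (the Weitzenb\"ock-type formula for $\overline\square_B$) and Proposition~3.23 (the comparison of $\overline\square_\kappa$ with $\overline\square_B$), so my plan is purely algebraic substitution. First, I would invoke Proposition~3.23 to write
\begin{equation*}
\overline\square_\kappa\phi=\overline\square_B\phi-\tfrac12\bigl(\epsilon(\kappa_B^{0,1})\bar\partial_B^*+\bar\partial_B^*\epsilon(\kappa_B^{0,1})\bigr)\phi-\tfrac12\bigl(\bar\partial_B H^{0,1}\lrcorner+H^{0,1}\lrcorner\bar\partial_B\bigr)\phi+\tfrac12|\kappa_B^{0,1}|^2\phi.
\end{equation*}
Then I would substitute the formula from Proposition~3.21, namely
\begin{equation*}
\overline\square_B\phi=\nabla_T^*\nabla_T\phi+\sum_{a,b}\overline\omega^a\wedge\overline V_b\lrcorner R^Q(V_b,\overline V_a)\phi+\sum_a\overline\omega^a\wedge(\nabla_{\overline V_a}H^{0,1})\lrcorner\phi,
\end{equation*}
into the right-hand side above. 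Collecting the Bochner and curvature contribution from the first line with the zeroth-order $\kappa_B$-correction from the second yields exactly the claimed expression.

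The only point that needs attention is the bookkeeping of signs and numerical factors, but there is no genuine obstacle: the perturbations defining $\bar\partial_\kappa$ and $\bar\partial_\kappa^*$ from Proposition~3.2 are zeroth-order (a wedge with $\kappa_B^{0,1}$ and an interior product with $H^{0,1}$), so they commute with $\nabla_T^*\nabla_T$ only up to the correction terms already accounted for in Proposition~3.23, and no additional K\"ahler commutator identities (beyond those used to establish Propositions~3.21 and~3.23) need to be invoked. Consequently, the substitution is formal and the proposition follows at once.
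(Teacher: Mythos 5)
Your proposal is correct and is essentially the paper's own argument: the paper derives this proposition exactly by substituting the Weitzenb\"ock formula of Proposition~3.21 for $\overline\square_B$ into the comparison formula of Proposition~3.23, with no further input. (Your aside about commuting the zeroth-order perturbations with $\nabla_T^*\nabla_T$ is unnecessary---the substitution is purely formal---but it does not affect the validity of the argument.)
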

\begin{rem} Proposition 3.24  was also shown in \cite[Theorem 3.1]{HV}, but the expression is  little bit different. The authors in \cite{HV} proved the vanishing theorem of transversally holomorphic basic $(r,0)$-form  by using the Weitzenb\"ock formula for the twisted basic Laplacian $\overline\square_\kappa$ \cite[Theorem 3.4]{HV}.  In this research, we  deal with the vanishing theorem of $\bar\partial_\kappa$-holomorphic $(r,0)$-form (Corollary 3.31 below).
\end{rem}

 \begin{prop} \cite{JR2}  Let $(M,\mathcal F,g_Q,J)$ be a transverse K\"ahler foliation on a closed Riemannian manifold $M$. Then there exists a bundle-like metric compatible with the    K\"ahler structure such that
$\partial_B^*\kappa_B^{1,0} =0$, or $\bar\partial_B^*\kappa_B^{0,1}=0$.
\end{prop}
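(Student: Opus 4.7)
The plan is to exploit the \'Alvarez class invariance of $[\kappa_B]$: within a fixed transverse K\"ahler structure, two basic mean curvature forms coming from compatible bundle-like metrics differ by an exact basic $1$-form $df$ (cf.\ Remark 1.2), and, conversely, every such shift can be realized by an appropriate leafwise modification of $g_M$ which leaves $g_Q$, $J$, and $\omega$ untouched. The problem therefore reduces to selecting a basic function $f$ so that the new representative $\kappa_B + df$ satisfies $\partial_B^*(\kappa_B+df)^{1,0}=0$. Using that $J$ acts as $-i$ on $\Omega_B^{1,0}(\mathcal F)$ and $+i$ on $\Omega_B^{0,1}(\mathcal F)$, a direct computation gives $\tfrac{1}{2}(df+iJ\,df)=\partial_B f$, so the shift produces $\kappa_B^{1,0}\mapsto \kappa_B^{1,0}+\partial_B f$, and the target condition becomes the scalar elliptic equation $\square_B f = \partial_B^*\partial_B f = -\partial_B^*\kappa_B^{1,0}$ on $\Omega_B^{0,0}(\mathcal F)$.

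To solve this equation I would invoke the basic Dolbeault Hodge decomposition (Theorem 3.1) for $\square_B$ in bidegree $(0,0)$. A solution $f$ exists if and only if the right-hand side is $L^{2}$-orthogonal to $\ker \square_B \cap \Omega_B^{0,0}(\mathcal F)$. On a compact manifold every such $g$ satisfies $\partial_B g=0$ (since $\|\partial_B g\|^2=\langle \square_B g,g\rangle=0$), whence $\langle -\partial_B^*\kappa_B^{1,0},g\rangle = -\langle \kappa_B^{1,0},\partial_B g\rangle = 0$. Hence a basic function $f$ exists, and passing to the bundle-like metric that realizes the representative $\kappa_B+df$ yields $\partial_B^*\kappa_B^{1,0}=0$. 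The companion identity $\bar\partial_B^*\kappa_B^{0,1}=0$ follows by complex conjugation, since $\kappa_B^{0,1}=\overline{\kappa_B^{1,0}}$ and conjugation exchanges $\partial_B^*$ with $\bar\partial_B^*$.

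The main obstacle I anticipate is the realization step underlying the first paragraph: one must verify that an arbitrary shift $\kappa_B\mapsto \kappa_B+df$ is actually induced by a change of bundle-like metric that keeps the transverse K\"ahler data $(g_Q,J,\omega)$ intact. For the purely Riemannian \'Alvarez class this is classical via a leafwise conformal factor $e^{2h}$ altering only the tangential part of $g_M$; since the transverse metric $g_Q$, the complex structure $J$, and hence the transverse Levi-Civita connection and $\nabla J=0$ are all untouched, the transverse K\"ahler structure is preserved, while Rummler's formula produces the prescribed shift of $\kappa_B$ (up to a universal constant absorbed into $f$). Granting this verification, the Hodge-theoretic argument above completes the proof.
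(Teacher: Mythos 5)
There is a genuine gap at your central reduction step. The adjoint $\partial_B^*$ is \emph{not} determined by the transverse K\"ahler data $(g_Q,J,\omega)$ alone: by (\ref{3-5}), $\partial_B^*=\partial_T^*+H^{1,0}\lrcorner$, where $\partial_T^*$ is purely transverse but $H^{1,0}$ is built from the mean curvature, which is exactly what your leafwise conformal change modifies. So after realizing the shift $\kappa_B\mapsto\kappa_B+d_Bf$, the condition you must arrange is $(\partial_B^*)_{\mathrm{new}}\bigl(\kappa_B^{1,0}+\partial_Bf\bigr)=0$, i.e.
\begin{equation*}
\partial_T^*\bigl(\kappa_B^{1,0}+\partial_Bf\bigr)+\tfrac12\,\bigl|\kappa_B+d_Bf\bigr|^2=0 ,
\end{equation*}
(using $H^{1,0}\lrcorner\,\kappa_B^{1,0}=\tfrac12|\kappa_B|^2$ applied to the new mean curvature), which is quadratic (Riccati--type) in $f$, not the linear Poisson equation $\square_Bf=-\partial_B^*\kappa_B^{1,0}$ taken with the \emph{old} adjoint. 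There is a second, related problem: $f$ must be real to come from a metric change, but your linear equation is genuinely complex-valued — on a nontaut foliation $\square_B\neq\overline\square_B$, so $\partial_B^*\partial_Bf$ need not be real for real $f$, and imposing the equation together with its conjugate on a single real $f$ is overdetermined. Hence the Hodge-projection argument, as written, does not produce the desired metric; ironically, the step you flagged as the main obstacle (realizing $\kappa_B+d_Bf$ by a leafwise conformal factor while keeping $(g_Q,J,\omega)$) is the standard, unproblematic part.

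The route behind the citation [JR2] sidesteps any direct equation for $f$. Since $d_B\kappa_B=0$ on a compact manifold, applying $\Lambda$ to $d_B\kappa_B=0$ and using the transverse K\"ahler identities (Proposition 3.3) gives $\partial_T^*\kappa_B^{1,0}=\bar\partial_T^*\kappa_B^{0,1}$, and together with $H^{1,0}\lrcorner\,\kappa_B^{1,0}=H^{0,1}\lrcorner\,\kappa_B^{0,1}=\tfrac12|\kappa_B|^2$ and (\ref{3-5}) this yields, for \emph{every} compatible bundle-like metric,
\begin{equation*}
\partial_B^*\kappa_B^{1,0}=\bar\partial_B^*\kappa_B^{0,1}=\tfrac12\,\delta_B\kappa_B .
\end{equation*}
One then invokes the theorem of March--Min-Oo--Ruh [MO] (cf.\ also \'Alvarez L\'opez [Al]) that there is a bundle-like metric with the same transverse structure whose mean curvature is basic-harmonic, $\delta_B\kappa_B=0$; since that metric change is leafwise, it preserves the K\"ahler compatibility, and the displayed identity finishes the proof. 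Note that the nonlinearity you lost is precisely what the quoted existence theorem handles (via an exponential substitution that linearizes the quadratic condition and an argument producing a positive solution); your conjugation remark correctly shows the two stated conditions are equivalent, but the missing ingredient is the metric-dependence of $\partial_B^*$.
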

\begin{lemma} For any $\phi\in\Omega_B^{r,0}(\mathcal F)$, we get
\begin{align*}
\bar\partial_B^*\epsilon(\kappa_B^{0,1})\phi = -\nabla_{H^{1,0}}\phi.
\end{align*}
\end{lemma}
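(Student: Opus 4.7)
The plan is to compute $\bar\partial_B^*(\kappa_B^{0,1}\wedge\phi)$ directly by combining (\ref{3-5})--(\ref{3-6}), and to exploit heavily that $\phi$ has no $(0,1)$-component. Specifically, starting from $\bar\partial_B^* = \bar\partial_T^* + H^{0,1}\lrcorner$ and $\bar\partial_T^* = -\sum_a \overline V_a\lrcorner\nabla_{V_a}$, I would apply this operator to $\kappa_B^{0,1}\wedge\phi$, use the Leibniz rule for $\nabla_{V_a}$ and the antiderivation property of $\overline V_a\lrcorner$ on a 1-form wedge product, and then use the fact that because $\nabla$ preserves bidegree (the K\"ahler condition $\nabla J=0$) and $\phi$ is of type $(r,0)$, we have $\overline V_a\lrcorner\phi = \overline V_a\lrcorner\nabla_{V_a}\phi = 0$ and $H^{0,1}\lrcorner\phi=0$. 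These observations kill all the cross terms where a $(0,1)$-vector meets the $(r,0)$-piece of $\phi$.

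The surviving expression is
\[
\bar\partial_B^*(\kappa_B^{0,1}\wedge\phi) = -\sum_a(\nabla_{V_a}\kappa_B^{0,1})(\overline V_a)\,\phi - \sum_a\kappa_B^{0,1}(\overline V_a)\,\nabla_{V_a}\phi + \kappa_B^{0,1}(H^{0,1})\,\phi .
\]
I would then identify $\sum_a\kappa_B^{0,1}(\overline V_a)V_a = H^{1,0}$; this is just the statement that the $(1,0)$-part of $\kappa_B^\sharp$ equals $H^{1,0}$, and it follows by unpacking $\kappa_B^{0,1} = \tfrac12(\kappa_B - iJ\kappa_B)$ and $H^{1,0} = \tfrac12(\kappa_B^\sharp - iJ\kappa_B^\sharp)$ in the frame $\{V_a\}$. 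So the middle sum becomes $\nabla_{H^{1,0}}\phi$. The remaining scalar coefficient $-\sum_a(\nabla_{V_a}\kappa_B^{0,1})(\overline V_a) + \kappa_B^{0,1}(H^{0,1})$ is precisely $\bar\partial_B^*\kappa_B^{0,1}$, which is exactly what (\ref{3-5})--(\ref{3-6}) produce when applied to the $(0,1)$-form $\kappa_B^{0,1}$ itself. Hence one obtains the cleaner identity
\[
\bar\partial_B^*(\kappa_B^{0,1}\wedge\phi) = (\bar\partial_B^*\kappa_B^{0,1})\,\phi - \nabla_{H^{1,0}}\phi .
\]

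To finish, I would invoke Proposition 3.26 to choose the bundle-like metric compatible with the K\"ahler structure so that $\bar\partial_B^*\kappa_B^{0,1}=0$; the scalar term then disappears and the lemma follows. The main bookkeeping obstacle I foresee is the careful antiderivation expansion of $\overline V_a\lrcorner\nabla_{V_a}(\kappa_B^{0,1}\wedge\phi)$ with correct signs, but once the $(r,0)$-type restriction has eliminated the cross terms, the identification of $H^{1,0}$ in the first-order part and of $\bar\partial_B^*\kappa_B^{0,1}$ in the zeroth-order part is essentially automatic.
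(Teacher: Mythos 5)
Your proposal is correct and follows essentially the same route as the paper: expand $\bar\partial_B^*\epsilon(\kappa_B^{0,1})\phi$ via $\bar\partial_B^*=\bar\partial_T^*+H^{0,1}\lrcorner$ and $\bar\partial_T^*=-\sum_a\overline V_a\lrcorner\nabla_{V_a}$, use the $(r,0)$-type of $\phi$ to kill the cross terms, identify the first-order term as $\nabla_{H^{1,0}}\phi$ and the scalar remainder as $\bar\partial_B^*\kappa_B^{0,1}$, and then invoke Proposition 3.26 to choose the metric with $\bar\partial_B^*\kappa_B^{0,1}=0$. The only cosmetic difference is that the paper leaves the remainder as $(\bar\partial_T^*\kappa_B^{0,1})\phi+|\kappa_B^{0,1}|^2\phi$ before applying the same normalization, whereas you combine these into $(\bar\partial_B^*\kappa_B^{0,1})\phi$ first.
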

\begin{proof}  From (\ref{3-15}) and (\ref{3-16}),  for any $\phi\in\Omega_B^{r,0}(\mathcal F)$, since $ \overline   V_a \lrcorner \phi =0$, we have
\begin{align*}
 \bar\partial_B^*\epsilon(\kappa_B^{0,1})\phi&=-\sum_{a} \overline V_a\lrcorner \nabla_{V_a}\kappa_B^{0,1}\wedge\phi-\sum_a \kappa_B^{0,1}(\overline V_a)\nabla_{V_a}\phi  +  H^{0,1}\lrcorner \epsilon(\kappa_B^{0,1})\phi\\
 &=(\bar\partial_T^*\kappa_B^{0,1})\wedge\phi -\nabla_{H^{1,0}}\phi + |\kappa_B^{0,1}|^2\phi.
 \end{align*}
 By Proposition 3.26,  if we choose the bundle-like metric such that $\bar\partial_B^*\kappa_B^{0,1}=0$, the the proof follows.  
  \end{proof}
 From  Lemma 3.26 and  Proposition 3.27, we get
\begin{thm}  On  a transverse K\"ahler foliation, the following hold:

(1)  If  $\phi\in\Omega_B^{r,0}(\mathcal F)$, then
\begin{align}
\overline\square_\kappa\phi&= \nabla_T^*\nabla_T\phi-\frac12\bar\partial_B^* \epsilon(\kappa_B^{0,1})\phi -\frac12H^{0,1}\lrcorner\bar\partial_B\phi+ \frac12 |\kappa_B^{0,1}|^2\phi \label{3-30}\\
&=\overline\nabla_T^*\overline\nabla_T\phi+\sum_a R^Q(\overline V_a,V_a)\phi +\nabla_{H^{0,1}}\phi
+\frac12\bar\partial_B^* \epsilon(\kappa_B^{0,1})\phi - \frac12 H^{0,1}\lrcorner\bar\partial_B\phi + \frac12 |\kappa_B^{0,1}|^2\phi. \label{3-31}
\end{align}

(2) If $\phi\in\Omega_B^{r,n}(\mathcal F)$, then
\begin{align}
\overline\square_\kappa\phi&= \overline\nabla_T^*\overline\nabla_T\phi-\nabla_{H^{1,0}}\phi +\frac12\nabla_{H^{0,1}}\phi + \frac12 {\rm div}_\nabla(H^{0,1})\phi-\frac12\kappa_B^{0,1}\wedge\bar\partial_B^*\phi + \frac12 |\kappa_B^{0,1}|^2\phi \label{3-30-1}\\
&=\nabla_T^*\nabla_T\phi+\sum_a R^Q(V_a,\overline V_a)\phi -\frac12\nabla_{H^{0,1}}\phi-\frac12\kappa_B^{0,1}\wedge\bar\partial_B^*\phi +\frac12 {\rm div}_\nabla(H^{0,1})\phi + \frac12 |\kappa_B^{0,1}|^2\phi. \label{3-31-1}
\end{align}
\end{thm}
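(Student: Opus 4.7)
The proof is essentially a direct specialization of Proposition 3.24 (equivalently, of Proposition 3.23 combined with Proposition 3.22) to the extreme bidegrees $(r,0)$ and $(r,n)$, together with the identity (\ref{DeltaTSquareForm}) and Lemma 3.27. Throughout I fix a bundle-like metric satisfying $\bar\partial_B^*\kappa_B^{0,1}=0$, whose existence is guaranteed by Proposition 3.26, so that Lemma 3.27 is available.

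For part (1), I start from the formula of Proposition 3.23. If $\phi\in\Omega_B^{r,0}(\mathcal F)$, then $\overline V_a\lrcorner\phi=0$ for every $a$, which by (\ref{3-6}) gives $\bar\partial_B^*\phi=0$; moreover $H^{0,1}\lrcorner\phi=0$ since $H^{0,1}\in Q^{0,1}$. Consequently the terms $\epsilon(\kappa_B^{0,1})\bar\partial_B^*\phi$ and $\bar\partial_B(H^{0,1}\lrcorner\phi)$ in Proposition 3.23 vanish, and substituting Proposition 3.22(1), $\overline\square_B\phi=\nabla_T^*\nabla_T\phi$, yields the first line of (\ref{3-30}). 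For the second line (\ref{3-31}), I apply (\ref{DeltaTSquareForm}) to rewrite
\[
\nabla_T^*\nabla_T\phi=\overline\nabla_T^*\overline\nabla_T\phi+\nabla_{H^{0,1}}\phi-\nabla_{H^{1,0}}\phi-\sum_a R^Q(V_a,\overline V_a)\phi,
\]
convert the curvature sum using $R^Q(V_a,\overline V_a)=-R^Q(\overline V_a,V_a)$, and then invoke Lemma 3.27 to identify $-\nabla_{H^{1,0}}\phi$ with $\bar\partial_B^*\epsilon(\kappa_B^{0,1})\phi$. Reassembling the pieces reproduces the stated second expression.

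For part (2), the same starting formula of Proposition 3.23 is used, but the vanishing pattern is dual: for $\phi\in\Omega_B^{r,n}(\mathcal F)$ we have $\bar\partial_B\phi=0$ (bidegree reasons) and $\kappa_B^{0,1}\wedge\phi=0$. Hence the two terms $\bar\partial_B^*\epsilon(\kappa_B^{0,1})\phi$ and $H^{0,1}\lrcorner\bar\partial_B\phi$ disappear, leaving
\[
\overline\square_\kappa\phi=\overline\square_B\phi-\tfrac12\epsilon(\kappa_B^{0,1})\bar\partial_B^*\phi-\tfrac12\bar\partial_B(H^{0,1}\lrcorner\phi)+\tfrac12|\kappa_B^{0,1}|^2\phi.
\]
Substituting the first form of Proposition 3.22(2), namely $\overline\square_B\phi=\nabla_T^*\nabla_T\phi+\sum_a R^Q(V_a,\overline V_a)\phi+\operatorname{div}_\nabla(H^{0,1})\phi$, and using the expansion $\bar\partial_B(H^{0,1}\lrcorner\phi)=\sum_a\bar\omega^a\wedge(\nabla_{V_a}H^{0,1})\lrcorner\phi+\sum_a\bar\omega^a\wedge H^{0,1}\lrcorner\nabla_{V_a}\phi$, one identifies the second sum with $\nabla_{H^{0,1}}\phi$ modulo the vanishing term $H^{0,1}\lrcorner\bar\partial_B\phi=0$, yielding the second expression (\ref{3-31-1}). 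The first expression (\ref{3-30-1}) then follows by substituting instead the second form of Proposition 3.22(2), $\overline\square_B\phi=\overline\nabla_T^*\overline\nabla_T\phi+\nabla_{H^{0,1}-H^{1,0}}\phi+\operatorname{div}_\nabla(H^{0,1})\phi$, and carrying out the same rearrangement.

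The main obstacle is the bookkeeping around $\bar\partial_B(H^{0,1}\lrcorner\phi)$ in part (2): one must carefully commute $\bar\partial_B$ past the interior product, absorb the covariant-derivative-of-$H^{0,1}$ piece into the curvature and divergence terms already supplied by Proposition 3.22(2), and verify sign conventions for $\epsilon(\kappa_B^{0,1})$ versus $\kappa_B^{0,1}\wedge$. Everything else is a straightforward projection onto the appropriate bidegree, combined with (\ref{DeltaTSquareForm}) and Lemma 3.27; the genuine content of the theorem lies in isolating which terms of the general Weitzenböck-type formula are forced to vanish at the extremal bidegrees and which reassemble into the natural geometric quantities $\nabla_{H^{0,1}}\phi$, $\nabla_{H^{1,0}}\phi$, and $\operatorname{div}_\nabla(H^{0,1})\phi$.
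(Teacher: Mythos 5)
Your argument is correct and is essentially the paper's own (much terser) proof: specialize Propositions 3.22--3.24 to the bidegrees $(r,0)$ and $(r,n)$, where the bidegree-forced vanishings you list are exactly the ones used, and combine with (\ref{DeltaTSquareForm}), Lemma 3.27 and the metric normalization $\bar\partial_B^*\kappa_B^{0,1}=0$ from Proposition 3.26. The only correction needed is notational: in expanding $\bar\partial_B(H^{0,1}\lrcorner\phi)$ the derivatives should be $\nabla_{\overline V_a}$ (since $\bar\partial_B=\sum_a\bar\omega^a\wedge\nabla_{\overline V_a}$ on basic forms), not $\nabla_{V_a}$; with that change your identification of the second sum with $\nabla_{H^{0,1}}\phi$ (using $\bar\partial_B\phi=0$ in type $(r,n)$) and the absorption of the remaining piece into ${\rm div}_\nabla(H^{0,1})\phi$ go through exactly as you describe.
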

\begin{proof}  Let $\phi$ be a basic form of type $(r,0)$.  Since $Z\lrcorner\phi=0$ for any $Z\in Q^{0,1}$, from Proposition 3.24,  (\ref{3-30}) is proved.   
Now, we choose the bundle-like metric such that the mean curvature form is basic harmonic, that is, 
$\bar\partial_B^*\kappa_B^{0,1}=0$.  From Lemma 3.26 and (\ref{DeltaTSquareForm}), the proof of (\ref{3-31}) follows.  From Proposition 3.21(2) and Proposition 3.22,  the proofs of (\ref{3-30-1}) and (\ref{3-31-1})  follow.
\end{proof}

\begin{prop}  Let $(M,\mathcal F,g_Q,J)$ be a transverse K\"ahler foliation on a compact manifold $M$ with a bundle-like metric.  If $\phi\in\Omega_B^{r,0}(\mathcal F)$ is a  $\Delta_\kappa$-harmonic form, then
\begin{align*}
\nabla_{V}\phi=0
\end{align*}
for any $V\in\Gamma Q^{0,1}$. 
\end{prop}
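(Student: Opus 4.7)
The plan is to deduce the conclusion from the Weitzenb\"ock identity (\ref{3-30}) of Theorem 3.28 together with the automatic vanishing of $\bar\partial_\kappa^*$ on basic $(r,0)$-forms. First I would observe that for $\phi\in\Omega_B^{r,0}(\mathcal F)$ one has $\bar\partial_B^*\phi=0$ (since $\Omega_B^{r,-1}(\mathcal F)=0$) and $H^{0,1}\lrcorner\phi=0$, so $\bar\partial_\kappa^*\phi=\bar\partial_B^*\phi-\tfrac12 H^{0,1}\lrcorner\phi=0$ for purely type-theoretic reasons. Since $\Delta_\kappa=2\overline\square_\kappa$ by Theorem 3.5, the hypothesis $\Delta_\kappa\phi=0$ becomes $\overline\square_\kappa\phi=0$, which combined with $\bar\partial_\kappa^*\phi=0$ forces $\bar\partial_\kappa\phi=0$; in other words
\[
\bar\partial_B\phi=\tfrac12\kappa_B^{0,1}\wedge\phi.
\]

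Next I would take the pointwise identity $\overline\square_\kappa\phi=0$ given by (\ref{3-30}), pair with $\phi$ in the global $L^2$-Hermitian product, and integrate over $M$. The first term contributes $\int_M\langle\nabla_T^*\nabla_T\phi,\phi\rangle$; for the remaining three terms I would substitute the relation above and use that $\epsilon(\kappa_B^{0,1})$ has formal adjoint $H^{0,1}\lrcorner$ (cf.~(\ref{3-5})). The decisive pointwise identity is
\[
H^{0,1}\lrcorner(\kappa_B^{0,1}\wedge\phi)=\kappa_B^{0,1}(H^{0,1})\,\phi-\kappa_B^{0,1}\wedge(H^{0,1}\lrcorner\phi)=|\kappa_B^{0,1}|^2\phi,
\]
which is immediate from $H^{0,1}\lrcorner\phi=0$ and $\kappa_B^{0,1}(H^{0,1})=|\kappa_B^{0,1}|^2$; it yields both $\|\kappa_B^{0,1}\wedge\phi\|_{L^2}^2=\int_M|\kappa_B^{0,1}|^2|\phi|^2$ and $(H^{0,1}\lrcorner\bar\partial_B\phi,\phi)_{L^2}=\tfrac12\int_M|\kappa_B^{0,1}|^2|\phi|^2$.

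Inserting these into the integrated equation, the three terms proportional to $\int_M|\kappa_B^{0,1}|^2|\phi|^2$ cancel, and the identity reduces to $\int_M\langle\nabla_T^*\nabla_T\phi,\phi\rangle=0$. Because $\nabla_T^*\nabla_T$ is formally self-adjoint and non-negative with $\int_M\langle\nabla_T^*\nabla_T\phi,\phi\rangle=\sum_a\int_M|\nabla_{\overline V_a}\phi|^2$, the conclusion $\nabla_{\overline V_a}\phi=0$ follows for every index $a$, which is exactly $\nabla_V\phi=0$ for every $V\in\Gamma Q^{0,1}$.

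The main obstacle is the coefficient bookkeeping: one has to verify that the three contributions proportional to $\int_M|\kappa_B^{0,1}|^2|\phi|^2$ (coming from $-\tfrac12 \bar\partial_B^*\epsilon(\kappa_B^{0,1})\phi$, $-\tfrac12 H^{0,1}\lrcorner\bar\partial_B\phi$, and $+\tfrac12|\kappa_B^{0,1}|^2\phi$) cancel exactly. This rests on the anticommutator identity $\{\epsilon(\kappa_B^{0,1}),H^{0,1}\lrcorner\}=\kappa_B^{0,1}(H^{0,1})\cdot\operatorname{id}$, the same identity that underlies the derivation of (\ref{3-30}) from Propositions 3.23 and 3.24 in the first place.
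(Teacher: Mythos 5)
Your proposal is correct and takes essentially the same route as the paper: deduce $\bar\partial_\kappa\phi=0$ from $\Delta_\kappa=2\overline\square_\kappa$ on the compact manifold, substitute $\bar\partial_B\phi=\tfrac12\kappa_B^{0,1}\wedge\phi$ into (\ref{3-30}), integrate against $\phi$, and use $H^{0,1}\lrcorner(\kappa_B^{0,1}\wedge\phi)=|\kappa_B^{0,1}|^2\phi$ so that the $|\kappa_B^{0,1}|^2$ contributions cancel, leaving $\int_M\Vert\nabla_T\phi\Vert^2=0$ and hence $\nabla_V\phi=0$ for all $V\in\Gamma Q^{0,1}$. The only cosmetic difference is that the paper performs the substitution $H^{0,1}\lrcorner\bar\partial_B\phi=\tfrac12|\kappa_B^{0,1}|^2\phi$ pointwise before integrating, while you carry out the same bookkeeping at the $L^2$ level.
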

\begin{proof}  Since $\Delta_\kappa\phi=0$, by Theorem 3.5, $\overline\square_\kappa\phi=0$ and so $\bar\partial_\kappa\phi=0$.  That is, $\bar\partial_B\phi =\frac12\kappa_B^{0,1}\wedge\phi$.  
 Since $\phi$ is type of $(r,0)$,   we have
\begin{align*}
H^{0,1}\lrcorner \bar\partial_B\phi = \frac12 H^{0,1}\lrcorner (\kappa_B^{0,1}\wedge\phi) =\frac12 |\kappa_B^{0,1}|^2\phi.
\end{align*}
From (\ref{3-30}), we get
\begin{align}\label{3-34}
\nabla_T^*\nabla_T\phi -\frac12 \bar\partial_B^*\epsilon(\kappa_B^{0,1})\phi + \frac14 |\kappa_B^{0,1}|^2\phi =0.
\end{align}
By integrating $(\ref{3-34})$, we get
\begin{align}\label{3-35}
0&=\int_M \Vert \nabla_T\phi\Vert^2 -\frac12\int_M \langle\kappa_B^{0,1}\wedge\phi,\bar\partial_B\phi\rangle +\frac14\int_M |\kappa_B^{0,1}|^2\Vert\phi\Vert^2\notag\\
&=\int_M \Vert \nabla_T\phi\Vert^2 -\frac14\int_M \Vert\kappa_B^{0,1}\wedge\phi\Vert^2 +\frac14\int_M |\kappa_B^{0,1}|^2\Vert\phi\Vert^2.
\end{align}
Since $H^{0,1}\lrcorner\phi=0$ for any  $\phi\in\Omega_B^{r,0}(\mathcal F)$, we get 
\begin{align*}
\langle\kappa_B^{0,1}\wedge\phi,\kappa_B^{0,1}\wedge\phi\rangle = \langle\phi, H^{0,1}\lrcorner (\kappa_B^{0,1}\wedge\phi)\rangle
= |\kappa_B^{0,1}|^2 \Vert\phi\Vert^2.
\end{align*}
From (\ref{3-35}), we get 
\begin{align*}
\int_M \Vert \nabla_T\phi\Vert^2 =0,
\end{align*}
which implies  $\nabla_T\phi=0$, that is, for any $V\in\Gamma Q^{0,1}$, $\nabla_V\phi=0$.  
\end{proof}

\begin{thm} \label{VanishingThm}
Let $(M,\mathcal F,g_Q,J)$ be a transverse K\"ahler foliation on a compact manifold $M$ with a bundle-like metric.  

(1) If  the transverse Ricci curvature is nonnegative and positive at some point, then  $\mathcal H_\kappa^{r,0}(\mathcal F)=\{0\}$ for all $r>0$.  

(2) If $\mathcal F$ is transversely Ricci-flat and non-taut, then  $\mathcal H_\kappa^{r,0}(\mathcal F)=\{0\}$ for all $r>0$.  

\end{thm}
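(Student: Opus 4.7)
The plan combines Proposition 3.29 with the Weitzenb\"ock formula (3.31) of Theorem 3.28(1) for $(r,0)$-forms, followed by a Bochner-type integration. By Corollary 3.10 the conclusion $\mathcal H_\kappa^{r,0}(\mathcal F)=0$ is equivalent to $H_\kappa^{r,0}(\mathcal F)=0$, which is a cohomological statement, so I may replace the given bundle-like metric by one satisfying $\bar\partial_B^*\kappa_B^{0,1}=0$ via Proposition 3.26. Let $\phi\in\mathcal H_\kappa^{r,0}(\mathcal F)$. By Proposition 3.29, $\nabla_{\overline V_a}\phi=0$ for every $a$. This yields several immediate consequences: the identity $\bar\partial_B\phi=\sum_a\overline\omega^a\wedge\nabla_{\overline V_a}\phi$ gives $\bar\partial_B\phi=0$, which combined with $\bar\partial_\kappa\phi=0$ forces $\kappa_B^{0,1}\wedge\phi=0$; applying $H^{0,1}\lrcorner$ (and using $H^{0,1}\lrcorner\phi=0$) gives the pointwise identity $|\kappa_B^{0,1}|^2\phi=0$. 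Also $\nabla_{H^{0,1}}\phi=0$ since $H^{0,1}\in\Gamma Q^{0,1}$, and $H^{0,1}\lrcorner\bar\partial_B\phi=0$.

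To simplify further, I derive $\nabla_{H^{1,0}}\phi=0$. Formula (3.30) applied to $\phi$, together with $\overline\square_\kappa\phi=0$, $\nabla_T^*\nabla_T\phi=0$ (from Proposition 3.29), $H^{0,1}\lrcorner\bar\partial_B\phi=0$ and $|\kappa_B^{0,1}|^2\phi=0$, reduces to $\bar\partial_B^*\epsilon(\kappa_B^{0,1})\phi=0$; Lemma 3.26 then yields $\nabla_{H^{1,0}}\phi=0$. Substituting all these into (3.31) collapses the Weitzenb\"ock identity to
\begin{equation*}
0 = \overline\nabla_T^*\overline\nabla_T\phi + \sum_a R^Q(\overline V_a, V_a)\phi.
\end{equation*}
Pairing with $\phi$ and integrating over $M$ (using formal self-adjointness and non-negativity of $\overline\nabla_T^*\overline\nabla_T$, as stated before Proposition 3.21) gives
\begin{equation*}
0 = \|\overline\nabla_T\phi\|^2 + \int_M \langle\sum_a R^Q(\overline V_a, V_a)\phi,\phi\rangle.
\end{equation*}
A direct K\"ahler Bochner computation in a unitary frame diagonalizing $\mathrm{Ric}^Q$ at a point shows that $\sum_a R^Q(\overline V_a, V_a)$ acts on a basic $(r,0)$-form with eigenvalues $\rho_{i_1}+\cdots+\rho_{i_r}$ on the component $\phi_I$, so the integrand is non-negative whenever $\mathrm{Ric}^Q\ge 0$ and strictly positive at points where $\mathrm{Ric}^Q>0$ and $\phi\neq 0$.

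Under either hypothesis both terms above are non-negative and sum to zero, so each vanishes. In particular $\overline\nabla_T\phi=0$; combined with Proposition 3.29 this yields $\nabla_X\phi=0$ for every $X\in Q^C$, so $\phi$ is transversely parallel and $|\phi|^2$ is constant on the connected manifold $M$. For part~(1), the Ricci integrand is pointwise non-negative and integrates to zero, hence vanishes identically; at the point where $\mathrm{Ric}^Q>0$ this forces $\phi$ to vanish, and constancy of $|\phi|^2$ yields $\phi\equiv 0$. For part~(2), $\mathrm{Ric}^Q\equiv 0$ kills the Ricci integrand automatically; non-tautness of $\mathcal F$ implies $[\kappa_B]\neq 0$ in $H_B^1(\mathcal F)$, so $\kappa_B^{0,1}$ is nonzero at some point, and the pointwise identity $|\kappa_B^{0,1}|^2\phi=0$ forces $\phi$ to vanish there; constancy of $|\phi|^2$ again gives $\phi\equiv 0$. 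The principal technical obstacle is the K\"ahler Bochner identification of $\sum_a R^Q(\overline V_a, V_a)$ on $(r,0)$-forms, which although classical requires a careful local index computation exploiting the type-preservation of $R^Q$ (from $\nabla J=0$) and the K\"ahler symmetries of its components.
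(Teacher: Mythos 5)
Your proof is correct and follows essentially the paper's own argument: fix the bundle-like metric with $\bar\partial_B^*\kappa_B^{0,1}=0$ (Proposition 3.26), invoke Proposition 3.29 together with the Weitzenb\"ock formulas of Theorem 3.28(1), and integrate the resulting Bochner identity, using Ricci nonnegativity for (1) and nontautness for (2). The only difference is cosmetic --- you cancel the mean-curvature terms pointwise (via $|\kappa_B^{0,1}|^2\phi=0$ and $\nabla_{H^{1,0}}\phi=0$) before integrating, and spell out the parallelism/constancy of $\Vert\phi\Vert$ step, whereas the paper keeps the $|\kappa_B^{0,1}|^2\Vert\phi\Vert^2$ term inside the integral --- so the two arguments coincide in substance.
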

\begin{proof}
Let $\phi$ be a $\Delta_\kappa$-harmonic form of type $(r,0)$. If we choose the bundle-like metric such that $\bar\partial_B^*\kappa_B^{0,1}=0$, then  from (\ref{DeltaTSquareForm}), Lemma 3.27 and Proposition 3.29, 
\begin{align}\label{3-36}
\bar\nabla_T^*\bar\nabla_T\phi +\bar\partial_B^*\epsilon(\kappa_B^{0,1})\phi -\sum_a R^Q(V_a,\bar V_a)\phi=0.
\end{align}
Note that $(\sum_a R^Q(\bar V_a,V_a))\omega^b =Ric^Q(E_b,E_b)\omega^b$ \cite[Remark 4.7]{JR1}.
By integrating  (\ref{3-36}), we get
\begin{align*}
\int_M \Vert \bar\nabla_T\phi\Vert^2 + \int_M |\kappa_B^{0,1}|^2\Vert\phi\Vert^2 +\sum_{i=1}^r\int_M R^Q(E_{a_i},E_{a_i})\Vert\phi\Vert^2 =0.
\end{align*}
If $Ric^Q$ is nonnegative and positive at some point, then $\phi=0$. So the proof of (1) is proved.  If  $Ric^Q =0$, then  
\begin{align*}
|\kappa_B^{0,1}|\Vert\phi\Vert=0.
\end{align*}
So if $\mathcal F$ is nontaut, then $\phi=0$, that is,  the proof of (2) is finished.  
\end{proof}

\begin{coro} 
Let $(M,\mathcal F,g_Q,J)$ be as in Theorem~\ref{VanishingThm} with a transversely Ricci-flat foliation. If $\mathcal F$ is nontaut,  then 
\begin{align*}
H^{r,0}_\kappa(\mathcal F)\cong H_\kappa^{0,r}(\mathcal F) \cong H_\kappa^{n,s}(\mathcal F) \cong H_\kappa^{s,n}(\mathcal F)=\{0\}
\end{align*}
for $ r,s>0$.
\end{coro}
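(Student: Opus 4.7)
The plan is to assemble this corollary entirely from previously established structural results, with Theorem 3.30(2) supplying the only nontrivial analytic input. The observation is that once the $(r,0)$-pieces are known to vanish, the other three types reduce to them via the symmetries of twisted basic Dolbeault cohomology.

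First I would invoke Theorem 3.30(2) to obtain $\mathcal H_\kappa^{r,0}(\mathcal F)=\{0\}$ for every $r>0$; the Dolbeault isomorphism (Corollary 3.10) then translates this directly into $H_\kappa^{r,0}(\mathcal F)=\{0\}$ for $r>0$. Next, complex conjugation $\phi\mapsto\bar\phi$ commutes with $\overline\square_\kappa$ up to swapping types, giving the conjugate linear isomorphism $\mathcal H_\kappa^{r,s}(\mathcal F)\cong\mathcal H_\kappa^{s,r}(\mathcal F)$ used in Proposition 3.14 (formula (3.20)). Specializing to $s=0$ yields $H_\kappa^{0,r}(\mathcal F)\cong H_\kappa^{r,0}(\mathcal F)=\{0\}$ for $r>0$.

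For the remaining two families I would apply the Kodaira-Serre duality (Theorem 3.11) to write
\begin{align*}
H_\kappa^{n,s}(\mathcal F)\cong H_\kappa^{0,n-s}(\mathcal F),\qquad H_\kappa^{s,n}(\mathcal F)\cong H_\kappa^{n-s,0}(\mathcal F).
\end{align*}
When $0<s<n$, both $n-s$ lie strictly between $0$ and $n$, so vanishing of the right-hand sides is immediate from the first two steps. The only delicate case is $s=n$, where Kodaira-Serre only reduces matters to $H_\kappa^{n,n}(\mathcal F)\cong H_\kappa^{0,0}(\mathcal F)$ and Theorem 3.30(2) gives no direct information. However, the Hodge decomposition (3.19) identifies $H_\kappa^{0,0}(\mathcal F)$ as a summand of $H_\kappa^0(\mathcal F)$, and the Tautness Theorem (Theorem 2.8) asserts that nontautness of $\mathcal F$ forces $H_\kappa^0(\mathcal F)=\{0\}$, completing the argument.

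Since the proof merely chains together the Dolbeault isomorphism, the type-swap symmetry, Kodaira-Serre duality, and the Tautness Theorem, there is no serious computational obstacle. The one place requiring care is the boundary case $s=n$: the nontautness hypothesis must be used twice, once through Theorem 3.30(2) for the interior values and once through Theorem 2.8 for the corner $H_\kappa^{n,n}(\mathcal F)$.
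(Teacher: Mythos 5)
Your proof is correct and follows essentially the same route as the paper: Theorem 3.30(2) together with the Dolbeault isomorphism (Corollary 3.10) kills $H_\kappa^{r,0}(\mathcal F)$, complex conjugation gives $H_\kappa^{0,r}(\mathcal F)$, and Kodaira-Serre duality (Theorem 3.11) handles $H_\kappa^{n,s}(\mathcal F)$ and $H_\kappa^{s,n}(\mathcal F)$. Your extra care at the corner $s=n$, where duality only yields $H_\kappa^{n,n}(\mathcal F)\cong H_\kappa^{0,0}(\mathcal F)$ and you invoke the Tautness Theorem (Theorem 2.8) together with the decomposition (3.19) to use nontautness a second time, is a detail the paper's one-line proof leaves implicit, and you handle it correctly.
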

\begin{proof}  The proofs  follow from complex conjugation,  Kodaira-Serra duality and Dolbeault isomorphism.
\end{proof}

The vanishing theorems for  the basic harmonic space were proved in  \cite{HV} and \cite{JR1}, respectively.

\end{document}